\newtheorem{theorem}{Theorem}[section]
\newtheorem{lemma}[theorem]{Lemma}
\newtheorem{proposition}[theorem]{Proposition}
\newtheorem{corollary}[theorem]{Corollary}
\theoremstyle{definition}
\newtheorem{remark}[theorem]{Remark}
\newtheorem{question}[theorem]{Question}
\newtheorem{definition}[theorem]{Definition}
\newtheorem{problem}[theorem]{Problem}
\newcommand{\cH}{\mathcal{H}}
\newcommand{\ZZ}{\mathbb{Z}}
\newcommand{\CC}{\mathbb{C}}
\newcommand{\RR}{\mathbb{R}}
\newcommand{\cF}{\mathcal{F}}
\newcommand{\CP}{\mathbb{CP}}
\newcommand{\cC}{\mathcal{C}}
\newcommand{\cT}{\mathcal{T}}
\newcommand{\cS}{\mathcal{S}}
\newcommand{\cK}{\mathcal{K}}
\newcommand{\DD}{\mathbb{D}}
\newcommand{\betat}{\widetilde{\beta}}
\newcommand{\Hcheck}{\check{H}}
\newcommand{\del}{\partial}
\newcommand{\alphas}{\mathbf{\alpha}}
\newcommand{\betas}{\mathbf{\beta}}
\newcommand{\gammas}{\mathbf{\gamma}}
\title{Symplectic forms on trisected 4-manifolds}
\date{March 2022}
\begin{document}

\author{Peter Lambert-Cole}
\address{Department of Mathematics, University of Georgia, Athens, GA 30602}
\email{\href{plc@uga.edu}{plc@uga.edu}}

\begin{abstract}
Previously work of the author with Meier and Starkston showed that every closed symplectic manifold $(X,\omega)$ with a rational symplectic form admits a trisection compatible with the symplectic topology.  In this paper, we describe the converse direction and give explicit criteria on a trisection of a closed, smooth 4-manifold $X$ that allows one to construct a symplectic structure on $X$. Combined, these give a new characterization of 4-manifolds that admit symplectic structures.  This construction motivates several problems on taut foliations, the Thurston norm and contact geometry in 3-dimensions by connecting them to questions about the existence, classification and uniqueness of symplectic structures on 4-manifolds.
\end{abstract}

\maketitle

\section{Introduction}

Let $X$ be a smooth, closed, oriented 4-manifold. It is of great interest in low-dimensional topology to determine whether $X$ admits a symplectic structure and if so, to classify all such symplectic structures.  For example, work of Donaldson and Gompf showed that $X$ admits a symplectic structure if and only if it admits a compatible Lefschetz pencil.  This essentially reduces the problem to
algebra and mapping class groups of surfaces.  

Suppose that $\cT$ is a {\it trisection} of $X$, which is a particular decomposition into three 4-dimensional 1-handlebodies.

\begin{definition}
Let $X$ be a closed, smooth,oriented 4-manifold.  A {\it trisection} of $X$ is a decomposition $X = Z_1 \cup Z_2 \cup Z_3$ such that
\begin{enumerate}
    \item each sector $Z_{\lambda}$ is a 4-dimensional 1-handlebody,
    \item each double intersection $H_{\lambda} = Z_{\lambda} \cap Z_{\lambda+1}$ is a 3-dimensional 1-handlebody of genus $g$, and
    \item the triple intersection $\Sigma = Z_1 \cap Z_2 \cap Z_3$ is a closed, orientable surface of genus $g$.
\end{enumerate}
\end{definition}

The trisection $\cT$ and the 4-manifold $X$ itself can be encoded (up to diffeomorphism) by a {\it trisection diagram} $(\Sigma;\alphas,\betas,\gammas)$, which consists of a closed, oriented surface and three cut systems of curves.  This data determines how to build a trisected 4-manfiold from the `inside-out'.  The cut systems determine how to attach each double intersection $H_{\lambda}$ to the central surface to produce the {\it spine} $H_1 \cup H_2 \cup H_3$.  Capping off by attaching the 4-dimensional sectors $Z_1,Z_2,Z_3$ is unique up to diffeomorphism by the well-known result of Laudenbach and Poenaru \cite{LP}.

The following is a natural question to ask.

\begin{question}
Is there a criterion on a trisection that ensures $X$ admits a symplectic structure?
\end{question}

However, answering this question has proven surprising difficult.  The first problem is to find the correct notion of compatibility between a symplectic structure and a trisected 4-manifold.  For example, the central surface is nullhomologous, therefore it cannot be symplectic, and in general has negative Euler characteristic, therefore it cannot be Lagrangian.  The notions of {\it Weinstein trisections} of symplectic 4-manifolds and {\it Stein trisections} of complex manifolds were introduced in \cite{LM-Rational}.  Some recent progress has been made in constructing Stein trisections \cite{LM-Rational,LC-Balls,Zupan-Projective}.  Moreover, by using Auroux's work on representing symplectic 4-manifolds as branched coverings of $\CP^2$ \cite{Auroux}, it was shown that every symplectic 4-manifold admits a Weinstein trisection \cite{LMS}.  This construction is evidence that the latter structure is the conceptually correct one to look for.

A second difficulty is that the compatibility between trisections and symplectic topology does not appear to fit into existing conceptual frameworks.  For example, the boundary of a relative trisection inherits an open book decomposition \cite[Theorem 20]{GK-Tri}.  This suggests that for a compact, symplectic manifold with contact boundary, the open book induced by the relative trisection should support the contact structure.  However, consider the complement of a complex curve $\cC_d$ in $\CP^2$.  The contact structure on the boundary of $\CP^2 \setminus \nu(\cC_d)$ is Stein-fillable, hence tight.  
Naively applying the method of Miller-Kim \cite{Miller-Kim} to the curves in bridge position given in \cite{LM-Rational} can yield an induced open book that is not right-veering.  In particular, this open book supports a contact structure that is {\it not} tight.

In this paper, we describe how to construct a symplectic structure from geometric data on the spine of a trisected 4-manifold.  

\subsection{Representing cohomology via cocycles}

The construction of the closed 2-form $\omega$ on the trisected 4-manifold in Theorem \ref{thrm:main} inherently determines the class $[\omega] \in H^2_{DR}(X)$.  Before stating the theorem, we review how to interpret cohomology classes on trisected 4-manifolds.

Feller-Klug-Schirmer-Zemke \cite{FKSZ} and Florens-Moussard \cite{FM} gave presentations of the singular homology groups of a 4-manifold in terms of a trisection diagram.  This was reinterpreted in \cite{LC-Cohomology} as the Cech cohomology groups of certain presheaves with respect to an open cover determined by the trisection.  

In particular, let $\cH^i_{DR}$ denote the presheaf on $X$ that assigns to each open set $U$ the group $H^i_{DR}(U)$.  By abuse of notation, let $\cT = \{U_1,U_2,U_3\}$ denote the open cover of $X$ by three sets obtained by slightly thickening the three sectors of the trisection.  In particular, $U_{\lambda}$ is an open neighborhood of $X_{\lambda}$; $U_{\lambda} \cap U_{\lambda+1}$ is an open neighborhood of the handlebody $H_{\lambda}$, and $U_1 \cap U_2 \cap U_3$ is an open neighborhood of the central surface $\Sigma$.

By analyzing the Cech-to-derived-functor spectral sequence, applied to the open cover $\cT$ and presheaf $\cH^i_{DR}$, one can show there is an isomorphism (\cite{LC-Cohomology})
\[H^2_{DR}(X) \cong \Hcheck^1(\cT,\cH^1_{DR})\]
In words, this states that a class $[\omega] \in H^2_{DR}(X)$ can be represented by a 1-cocycle of closed 1-forms.  Specifically, there is a triple $(\beta_1,\beta_2,\beta_3)$, where $\beta_{\lambda}$ is a closed 1-form on $H_{\lambda}$, and
\[\beta_1 + \beta_2 + \beta_3 = 0\]
when restricted to the central surface $\Sigma$. \\

\subsection{Main theorem}

\begin{definition}
    Let $X$ be a closed, oriented 4-manifold and let $\cT$ be a trisection of $X$ with central surface $\Sigma$ and 3-dimensional sectors $H_1,H_2,H_3$.  A {\it symplectic 1-cocycle} for $(X,\cT)$ is a triple $(\betat_1,\betat_2,\betat_3)$ of closed, nonvanishing 1-forms on the three handlebodies such that the restrictions $\{\beta_{\lambda} = \betat_{\lambda|_{\Sigma}} \}$ satisfy the following conditions 
    \begin{enumerate}
        \item the cocycle condition:
        \[ \beta_1 + \beta_2 + \beta_3 = 0\]
        \item the pairwise positivity condition
        \[\beta_{\lambda} \wedge \beta_{\lambda+1} \geq 0\]
        \item they all vanish at the same $2g-2$ points $b_1,\dots,b_{2g-2}$ in $\Sigma$ and there exists a smooth function $f: \Sigma \rightarrow \RR$ and neighborhoods $\{b_i \in U_i\}$ such that $f|_{U_i} = \pm 1$ and $\sum_i f(b_i) = 0$ and
        \[\betat_{\lambda} = \beta_{\lambda} + d(sf)\]
        on a collar neighborhood of $\partial H_{\lambda} = \Sigma$.
    \end{enumerate}
\end{definition}

\begin{theorem}
\label{thrm:main}
Let $X$ be a closed, oriented 4-manifold and let $\cT$ be a trisection of $X$ with central surface $\Sigma$.  Let $(\betat_1,\betat_2,\betat_3)$ be a symplectic 1-cocycle for $(X,\cT)$.  Then
\begin{enumerate}
    \item there is a symplectic form $\omega$ on $\nu(H_1 \cup H_2 \cup H_3)$ representing the class
\[(\betat_1,\betat_2,\betat_3) \in \Hcheck^1(\cT,\cH^1_{DR}) \cong H^2_{DR}(X) \cong H^2_{DR}(\nu(H_1 \cup H_2 \cup H_3))\]
In addition, there exists inward-pointing Liouville vector fields for $\omega$ along each of the three boundary components $\widehat{Y}_1, \widehat{Y}_2, \widehat{Y}_3$.
\item If each of the induced contact structures $(\widehat{Y}_{\lambda},\widehat{\xi}_{\lambda})$ are tight, then there exists a global symplectic form $\omega$ on $X$ representing the class \[(\beta_1,\beta_2,\beta_3) \in \Hcheck^1(\cT,\cH^1_{DR}) \cong H^2_{DR}(X)\]
that is well-defined up to diffeomorphsim.

\item an $n$-parameter family of symplectic 1-cocycles extends to an $n$-parameter family of symplectic forms $\omega$.  In particular, if the class $(\betat_1,\betat_2,\betat_3) \in \Hcheck^1(\cT,\cH^1_{DR}) \cong H^2_{DR}(X)$ remains constant in the family, then the constructed symplectic forms are isotopic.
\end{enumerate}
\end{theorem}

\subsection{How can the construction fail?}

Many smooth 4-manifolds are known to not admit symplectic structures.  Where might the obstructions lie when considered in light of Theorem \ref{thrm:main}?

\begin{enumerate}
    \item {\bf No triple $\{\betat_{\lambda}\}$ of {\it closed, nonvanishing} 1-forms exist}.  This may be for cohomological reasons, as $[\beta_{\lambda}]$ is not in the image of the restriction map $H^1_{DR}(H_{\lambda}) \rightarrow H^1_{DR}(\Sigma)$.  The 1-cocycle $(\beta_1,\beta_2,\beta_3)$ encodes a class $[\omega]$ in $H^2_{DR}(X)$ and $[\beta_1] \wedge [\beta_2]$ is positive in $H^2_{DR}(\Sigma)$ if and only if $[\omega] \wedge [\omega]$ is positive in $H^4_{DR}(X)$.  In particular, if $b_2^+(X) = 0$, then the required cocycle cannot exists.  Secondly, if there is no cohomological obstruction, any triple $\{\betat_{\lambda}\}$ of closed 1-forms may be forced to vanish in some points.
    \item {\bf The induced contact structure $\widehat{\xi}_{\lambda}$ is not tight}.  This may happen at the level of homotopy.  The contact structure is a 2-plane field, which are classified up to homotopy by their Euler class in $H^2(Y)$ and a 3-dimensional obstruction.  In addition, the contact structure is homotopic to the field of $J$-complex tangencies along $Y$, where $J$ is an $\omega$-tame almost-complex structure.  This extends to an almost-complex structure on the 4-dimensional sector if and only if $\xi$ has vanishing Euler class and is homotopic to the unique tight contact structure.  Consequently, if $X$ does not admit an almost-complex structure, then at least one of the constructed contact structures must be overtwisted.  Secondly, in the unique correct homotopy class of 2-plane fields, there exists one tight and one overtwisted contact structure.  Therefore, the construction may simply yield the overtwisted one.
\end{enumerate}

The 4-manifold $3 \CP^2$ has $b_2^+ > 0$ and admits an almost-complex structure, but is known not to admit a symplectic structure.  Moreover, it admits a genus 3 trisection.  It would be interesting to understand how no possible choice of data on this trisection can satisfy the conditions of Theorem \ref{thrm:main}.  Consequently, this motivates the task of classifying the grafted contact structures that can be constructed on the genus 3 Heegaard splitting of $S^3$.

\subsection{Symplectic structures and trisection diagrams}

A trisected 4-manifold can be encoded by a {\it trisection diagram}, which is a tuple $(\Sigma; \alphas,\betas,\gammas)$ consisting of the central surface $\Sigma$ and three cut systems of curves $\alphas,\betas,\gammas$ that determine how the handlebodies $H_1,H_2,H_3$ are glued in.  There has been significant interest in finding a corresponding diagrammatic theory of symplectic trisections.  One approach is a so-called {\it multisection with divides}, introduced by Islambouli and Starkston \cite{IS-Divides}.

The restrictions $\{ \beta_{\lambda} = \betat_{\lambda}|_{\Sigma} \}$ determine a different picture on the central surface, namely three transverse singular foliations $\cF_1,\cF_2,\cF_3$.  Moreover, if we suppose that $\beta_1,\beta_2,\beta_3$ are {\it integral} 1-forms, then they define maps
\[\beta_1,\beta_2,\beta_3: \Sigma \longrightarrow S^1\]
with $2g - 2$ nondegenerate critical points.  The preimage of a regular value is a multicurve and for a fixed $\beta_{\lambda}$, the union of these multicurves determine a pants decomposition of $\Sigma$ -- i.e. a maximal system of $3g -3$ curves on $\Sigma$.  

The foliations $\cF_{\lambda}$ are not transverse to the boundary $\del H_{\lambda} = \Sigma$, but they do satisfy a condition analogous to being taut, namely the existence of a closed transversal intersecting every leaf.  The leaves of a taut foliation are known to minimize the Thurston norm \cite{Thurston}.  If $\betat_{\lambda}$ is integral, then compact leaves of $\cF_{\lambda}$ also minimize the generalized Thurston norm (specifically, Scharlemann's generalization \cite{Scharlemann}).

This begs the question of whether the extensions $\{\betat_{\lambda}\}$ are necessary pieces of data or are uniquely determined by data defined in terms of the central surface $\Sigma$.

\begin{question}
Given the data of
\begin{enumerate}
    \item an intrinsically harmonic, integral 1-form $\beta$ on $\Sigma$ with nondegenerate zero set $B$,
    \item a Lagrangian subspace $L \subset H^1(\Sigma,\ZZ)$ containing $[\beta]$, and
    \item a class $c_1 \in H_1(\Sigma,B;\ZZ)$
\end{enumerate}
Is there (a unique) handlebody $H$ with $\partial H = \Sigma$ such that
\begin{enumerate}
    \item $L = \text{image}(H^1(H) \rightarrow H^1(\Sigma))$, and
    \item for each regular value $\theta$ for $\beta: \Sigma \rightarrow S^1$, the Thurston norm of the unique element $[F] \in H_2(H,\beta^{-1}(\theta))$ is given by the pairing $\langle c_1, \beta^{-1}(\theta) \rangle$?
\end{enumerate}
\end{question}

\subsection{Grafted contact structures}

In the course of the proof of Theorem \ref{thrm:main}, three contact structures are constructed in Theorem \ref{thrm:main}.  In order to cap off the 4-dimensional sectors with strong symplectic fillings, it is necessary that these contact structures be tight.

Intuitively, these contact structures are obtained from `taut' foliations by a so-called {\it grafting} process.  This is a very natural operation that has nonetheless been hardly studied.  The inspiration arises from complex analysis and geometry.  Let $X$ be a complex surface.  A subset $P \subset X$ is an {\it analytic polyhedron} if there exists an open set $U$ containing the closure of $P$ and holomorphic functions $f_1,\dots,f_n$ on $U$ such that
\[P = \{z \in U : |f_1(z)|,\dots,|f_n(z)| \leq 1\}\]
For a generic choice of $\{f_1,\dots,f_n\}$, the boundary of $P$ is a smooth manifold with corners.

The top-dimensional faces of $\partial P$ are Levi-flat, which means they are foliated by complex codimension 1 holomorphic submanifolds.  The function 
\[g = \text{max}(|f_1|,\dots,|f_n|)\]
is a continuous, plurisubharmonic function on $U$ and $P$ is the sublevel set $ g^{-1}([0,1])$  If $X$ is a Stein surface, then $g$ can be approximated by a smooth, strictly plurisubharmonic function.$\widetilde{g}$.  The sublevel set $\widetilde{P} = \widetilde{g}^{-1}([0,1])$ is a compact Stein domain and the field of complex tangencies along $\partial \widetilde{P} = \widetilde{g}^{-1}(1)$ is a contact structure.

Given the importance of tightness in Theorem \ref{thrm:main}, it would be very interesting to produce criteria that ensures the resulting grafted contact structure is tight.

\begin{problem}
Give topological criteria so that the contact structures obtained in Theorem \ref{thrm:main} are tight.
\end{problem}

Establishing overtwistedness criteria may prove easier, as there may be an analogous criteria to {\it non-right-veering monodromy} for an open book decomposition.

\subsection{Symplectic cone}

Let $X$ be a closed, oriented 4-manifold.  The {\it symplectic cone} of $X$ is the subset of $H^2_{DR}(X)$ that can be represented by a symplectic form.  It is a cone since any nonzero multiple of a symplectic form is also a symplectic form.  Given a trisection $\cT$, we define the {\it symplectic cone of $\cT$} to be the subset of $H^2_{DR}(X)$ obtained by constructing symplectic forms via Theorem \ref{thrm:main} from symplectic 1-cocycles.  

\begin{question}
    Is the symplectic cone of $(X,\cT)$ always equal to the symplectic cone of $X$?
\end{question}

We can obtain partial information about the symplectic cone of a trisection.  Suppose that $(X,\omega)$ is constructed via Theorem \ref{thrm:main} from a symplectic 1-cocycle $\{\betat_{\lambda}\}$ and each $\betat_{\lambda}$ is a rational closed 1-form.  Then by integrating, we can identify $\betat_{\lambda}$ with a map
\[\betat_{\lambda}: H_{\lambda} \rightarrow S^1\]
Consequently, all regular leaves are compact.  Let $S_{\lambda} \subset H_2(H_{\lambda},\Sigma;\ZZ) \cong H^1(H_{\lambda};\ZZ)$ denote the positive cone generated by these compact leaves.  Let $\cS(\cT,\betat)$ be the set of triples $(F_1,F_2,F_3) \in S_{1} \oplus S_2 \oplus S_3$ that satisfy the cocycle condition
\[F_1 + F_2 + F_3 = 0\]
when restricted to $\Sigma$.  The set $\cS(\cT,\betat)$ generalizes the cone of curves on a complex surface.

\begin{proposition}
    Let $\{\betat_{\lambda}\}$ be a rational symplectic 1-cocycle defining on a global symplectic structure on $(X,\cT)$.  Then for every $(F_1,F_2,F_3) \in \cS(\cT,\betat)$, there is a symplectic 1-cocycle
    \[\{\betat_{\lambda} + PD(F_{\lambda}) \}\]
    that defines a global symplectic form on $X$.  In particular, $\{\betat_{\lambda} + PD(F_{\lambda})\}$ lies in the symplectic cone of $(X,\cT)$.
\end{proposition}

\begin{proof}
    Let $F_{\lambda}$ be a finite, real-linear combination of surfaces in $S_{\lambda}$.  Then we can construct Poincare duals to the compact leaves in such a way that each $\betat_{\lambda} + PD(F_{\lambda})$ is a closed, nonvanishing 1-form on $H_{\lambda}$.  Since the triples $(\betat_1,\betat_2,\betat_3)$ and $(F_1,F_2,F_3)$ satisfy the cocycle condition, so do their sum.  Since $F_{\lambda}$ is a positive linear combination of leaves of $\text{ker}(\betat_{\lambda})$, the inequality $\beta_{\lambda} \wedge \beta_{\lambda+1} \geq 0$ implies that we can assume
    \[ \beta_{\lambda} \wedge PD(\partial F_{\lambda+1}), PD(\partial F_{\lambda}) \wedge \beta_{\lambda+1}, PD(\partial F_{\lambda}) \wedge PD(\partial F_{\lambda+1}) \geq 0\]
    Consequently,
    \[\{ \betat_{\lambda} + t PD(F_{\lambda}) \}\]
    for $t \in [0,1]$ is a family of symplectic 1-cocycles that determine a 1-parameter homotopy of global symplectic structures on $(X,\cT)$.
\end{proof}

\subsection{Classification of symplectic structures}

There are several natural equivalence relations on symplectic structures on a 4-manifold $X$:
\begin{enumerate}
    \item the pair $\omega_1,\omega_2$ are {\bf isotopic} if they are connected by a path of cohomologous symplectic forms,
    \item the pair $\omega_1,\omega_2$ are {\bf homotopic} if they are connected by a path of symplectic forms,
    \item the pair $\omega_1,\omega_2$ are {\bf diffeomorphic} if there exists a diffeomorphism $\phi: X \rightarrow X$ such that $\phi^*(\omega_2) = \omega_1$, and 
    \item the pair $\omega_1,\omega_2$ are {\bf deformation-equivalent} if there exists a diffeomorphism $\phi: X \rightarrow X$ such that $\omega_1$ and $\phi^*(\omega_2)$ are homotopic.
\end{enumerate}
We refer the reader to the excellent survey of Salamon on the classification and uniqueness of symplectic forms on 4-manifolds \cite{Salamon}, which has also been incorporated into the most recent edition of \cite{McDuffSalamon1}.

\begin{question}
Suppose that $[\omega_1] = [\omega_2]$.  Are they diffeomorphic?
\end{question}

The construction here gives one potential approach to this question: The symplectic structure obtained (a) encodes the cohomology class of $[\omega]$, (b) is well-defined by the data up to diffeomorphism, and (c) extends to 1-parameter families of the data.  Hence studying the moduli of data may be profitable in classifying symplectic structures.  Moreover, this construction means that different symplectic forms can be directly compared.  

\begin{theorem}[\cite{LMS}]
Let $\omega_1,\omega_2$ be rational symplectic forms on $X$.  Then there exists a single smooth trisection $\cT$ of $X$ such that $\omega_1,\omega_2$ can both be constructed via Theorem \ref{thrm:main} on $\cT$.
\end{theorem}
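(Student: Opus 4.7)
The plan is to produce a single smooth trisection $\cT$ of $X$ together with two sets of data satisfying the hypotheses of Theorem \ref{thrm:main}, one recovering $\omega_1$ and the other recovering $\omega_2$ up to diffeomorphism. The starting point is the construction of \cite{LMS}: for each rational symplectic form $\omega_i$ there is a Weinstein trisection $\cT_i$ of $X$, built from an auxiliary Donaldson pencil adapted to $\omega_i$. On inspection, the LMS construction already furnishes all of the ingredients demanded by Theorem \ref{thrm:main} on $\cT_i$: a singular flat (conformal) structure and an intrinsically harmonic 1-form $\beta_i$ on the central surface (coming from the pencil restricted to $\Sigma$), closed nonvanishing extensions $\widetilde{\beta}_{i,\lambda}$ over the three handlebodies (coming from the Weinstein/Stein structure on the sectors), and a sign function $f_i$ on $\Sigma$ specifying the local model near each zero of $\beta_i$. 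So, were we allowed to use two different trisections, the statement would be immediate.

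Next I would align the two trisections. By the Reidemeister--Singer theorem for trisections of Gay--Kirby, the smooth trisections $\cT_1$ and $\cT_2$ admit a common stabilization $\cT$. The task then reduces to showing that for each $i$, the Theorem \ref{thrm:main} data on $\cT_i$ can be propagated across a stabilization sequence so that the resulting data on $\cT$ still outputs a symplectic form diffeomorphic to $\omega_i$.

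For this propagation I would work one stabilization at a time with a local model. A trisection stabilization attaches a 1-handle to the central surface and corresponding compressing arcs in two of the three handlebodies; on the 4-manifold side this move can be realized inside an arbitrarily small Darboux chart for $\omega_i$. In this chart the extension rules are: add a pair of new nondegenerate zeros of $\beta_i$ of opposite sign on the attaching band so that $\sum_j f_i(b_j) = 0$ still holds; extend the conformal class and the harmonic 1-form $\beta_i$ across the new neck by a standard Euclidean model with a saddle-type zero pair; and extend each of $\widetilde{\beta}_{i,1}, \widetilde{\beta}_{i,2}, \widetilde{\beta}_{i,3}$ across the new attaching arcs inside $H_\lambda$ by the corresponding standard closed, nonvanishing model, arranged so that the cocycle identity $\widetilde{\beta}_{i,1} + \widetilde{\beta}_{i,2} + \widetilde{\beta}_{i,3} = 0$ still holds on the enlarged surface and so that the Cech class $(\beta_{i,1},\beta_{i,2},\beta_{i,3}) \in \Hcheck^1(\cT,\cH^1_{DR})$ is unchanged. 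Since Theorem \ref{thrm:main} produces a form well-defined up to diffeomorphism, it suffices that the class and the data match up to homotopy through admissible data, which the uniqueness clause of Theorem \ref{thrm:main} then promotes to diffeomorphism equivalence of the resulting symplectic forms.

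The main obstacle is precisely this coupled local extension across a stabilization. Extending any one of $\beta_i$, $f_i$, or a single $\widetilde{\beta}_{i,\lambda}$ is a routine normal-form calculation, but the three 1-forms $\widetilde{\beta}_{i,\lambda}$ must be extended so that they are simultaneously closed and nonvanishing, satisfy the cocycle identity on the new piece of $\Sigma$, and continue to represent the class of $\omega_i$. Carrying out this extension while respecting the Weinstein compatibility inherited from $\cT_i$ is the technical heart of the argument; once the local model is established, iteration over the finite stabilization sequences from $\cT_1$ and $\cT_2$ to $\cT$ finishes the proof for both forms.
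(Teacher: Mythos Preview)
The paper does not prove this statement; it is stated with a citation to \cite{LMS} and no argument is given here. So there is nothing to compare your proposal against in this paper.

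Your outline is the natural one: build Weinstein trisections $\cT_1,\cT_2$ from \cite{LMS}, pass to a common Gay--Kirby stabilization $\cT$, and carry the Theorem~\ref{thrm:main} data along. But as written it has a genuine gap precisely where you flag the ``technical heart.'' You assert that the conformal structure, the harmonic $1$-form with its zeros, the sign function $f$, and the three closed nonvanishing extensions $\widetilde\beta_{i,\lambda}$ can all be extended across a stabilization by ``standard models,'' but you do not produce those models or verify the coupled conditions (nonvanishing, cocycle identity on the new surface, collar normal form $\beta_\lambda + d(sf)$). Stabilization inserts a new genus into $\Sigma$ and new compressing disks into the $H_\lambda$, and there is real content in arranging that the extended $\widetilde\beta_{i,\lambda}$ remain \emph{nonvanishing} on the new handles while still restricting to an intrinsically harmonic form on the enlarged $\Sigma$.

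A second, more serious omission is tightness. Theorem~\ref{thrm:main} only produces a symplectic form on all of $X$ when the three grafted contact structures $(\widehat Y_\lambda,\widehat\xi_\lambda)$ are tight. You know tightness holds for the original $\cT_i$ (since \cite{LMS} gives Weinstein trisections, the sectors are honest fillings), but after your local modification near the stabilization you must re-establish tightness for the new data on $\cT$. Nothing in your proposal addresses this, and it is not automatic: the paper itself emphasizes that the grafted contact structures can in principle be overtwisted. Without this check, your argument does not close up to a proof.
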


\begin{question}
    Given a pair of cohomologous symplectic 1-cocycles $\{\betat_{\lambda}\}$ and $\{\betat'_{\lambda}\}$ on $(X,\cT)$, are they connected by a 1-parameter family of symplectic 1-cocycles?
\end{question}

The condition that $\beta_{\lambda} \wedge \beta_{\lambda+1} \geq 0$, with $2g-2$ nondegenerate zeros, implies that we can choose a conformal class of metrics $\{\lambda^2 g\}$ on $\Sigma$ such that $\beta_{\lambda},\beta_{\lambda+1}$ are harmonic with respect to the Riemannian metrics in this conformal class and furthermore $\ast_{\lambda^2 g} \beta_{\lambda} = \beta_{\lambda+1}$.  Equivalently, we can encode this data in terms of a pair $(j,\zeta)$ where $j$ is a complex structure on $\Sigma$ and $\zeta$ is a holomorphic 1-form, and recover $\beta_{\lambda},\beta_{\lambda+1}$ as the real and imaginary parts of $\zeta$.

In particular, consider the space $\Omega \mathcal{M}_g$ of holomorphic 1-forms on Riemann surfaces of genus $g$.  A point $(C,\zeta)$ in $\Omega \mathcal{M}_g$ is one necessary piece of data in Theorem \ref{thrm:main}.  Isoperiodic deformations of $(C,\zeta)$, which are deformations that preserve $[\zeta] \in H^1_{DR}(\Sigma,\CC)$, then lead to a family of symplectic forms in the same DeRham cohomology class in $H^2_{DR}(X)$.  The connectivity and dynamics of leaves of the isoperiodic foliation on $\Omega \mathcal{M}_g$ was determined by Calsamiglia, Deroin and Francaviglia \cite{CDF-Transfer}.

\section{Inspiration}

We describe the inspiration for Theorem \ref{thrm:main} by working out the geometry of $(\CP^2,\omega_{FS})$ with respect to the standard trisection and then describing how the geometry pulls back to an arbitrary projective surface via a holomorphic branched covering.

\subsection{The Fubini-Study form on $\CP^2$}

The construction for a general trisection is analogous to the construction of the Fubini-Study form on $\CP^2$.  

In particular, we have that
\[\omega_{FS} = -\frac{i}{2} \del \overline{\del} \log (|z_1|^2 + |z_2|^2 + |z_3|^2)\]
The Ka\"hler potential $\phi = \log (|z_1|^2 + |z_2|^2 + |z_3|^2)$ is well-defined on homogeneous coordinates, up to adding a constant, hence its exterior derivatives are well-defined.  In the chart $z_{\lambda-1} = 1$, we can equivalently describe the form by
\[\omega_{FS} = d d^{\CC} \log ( 1 + |z_{\lambda}|^2 + |z_{\lambda+1}|^2)\]
where $d^{\CC}f := df(J-)$ for a function $f$ and $J: T\CP^2 \rightarrow \CP^2$ the almost-complex structure on the tangent bundle.  In particular, the form
\[\alpha_{\lambda} = d^{\CC} \log ( 1 + |z_{\lambda}|^2 + |z_{\lambda+1}|^2)\]
is a primitive for $\omega_{FS}$ in this coordinate chart.  In polar coordinates $z_j = r_j e^{i \theta_j}$, we have that
\begin{align*}
    \alpha_{\lambda} &= \phi(r_{\lambda},r_{\lambda+1}) \left( 2r_{\lambda} dr_{\lambda}(J-) + 2r_{\lambda+1} dr_{\lambda+1}(J-) \right) \\
    &= \phi(r_{\lambda},r_{\lambda+1}) \left(2 r_{\lambda}^2 d \theta_{\lambda} + 2 r_{\lambda+1}^2 d \theta_{\lambda+1} \right)
\end{align*}
where
\[\phi(x,y) = \frac{1}{1 + x^2 + y^2} \]

Now, consider the change of coordinates from $\{z_{\lambda-1} = 1\}$ to $\{z_{\lambda+1} = 1\}$:
\begin{align*}
    s_{\lambda} &= \frac{r_{\lambda}}{r_{\lambda+1}} & \psi_{\lambda} &= \theta_{\lambda} - \theta_{\lambda+1} \\ s_{\lambda-1} &= \frac{1}{r_{\lambda+1}} & \psi_{\lambda-1} &= - \theta_{\lambda+1}
\end{align*} 
Then
\begin{align*}
    \alpha_{\lambda-1} &= \phi(s_{\lambda-1},s_{\lambda}) ( 2 s^2_{\lambda-1}d \psi_{\lambda-1} + 2 s^2_{\lambda}d \psi_{\lambda}) \\
    &= r^2_{\lambda+1}\phi(r_{\lambda},r_{\lambda+1})\left(-2 \frac{1}{r^2_{\lambda+1}} d \theta_{\lambda+1} + 2 \frac{r^2_{\lambda}}{r^2_{\lambda+1}} (d \theta_{\lambda} - d \theta_{\lambda+1}) \right) \\
    &= \phi(r_{\lambda},r_{\lambda+1}) \left( -2 d \theta_{\lambda+1} + 2 r^2_{\lambda}( d \theta_{\lambda} - d \theta_{\lambda+1} )\right)
\end{align*}
So that
\begin{align*}
    \alpha_{\lambda} - \alpha_{\lambda-1} &= \phi \left(2 r^2_{\lambda+1} d \theta_{\lambda+1} + 2 d \theta_{\lambda+1} + 2 r^2_{\lambda} d \theta_{\lambda+1} \right) = 2 r^2_{\lambda} d \theta_{\lambda}
\end{align*}
Therefore, along $H_{\lambda} = Z_{\lambda} \cap Z_{\lambda-1} = \{r_{\lambda} = 1\}$, we obtain the closed 1-form $\beta_{\lambda} = 2 d \theta_{\lambda}$.  In summary, by tracing through the $\check{\text{C}}$ech-DeRham isomorphism, we can replace a closed 2-form with a 1-cocycle of closed 1-forms:
\[\omega_{FS} \mapsto (2 d\theta_1, 2 d \theta_2, 2( -d \theta_1 - d \theta_2))\]
Now consider the restriction to the central torus.  The pair $\{d \theta_1, d \theta_2\}$ define a framing of $T^*T^2$.  Moreover, they are Hodge dual for some flat metric $C$ on $T^2$ that makes this an orthonomal framing.  The three 1-forms induce three kernel foliations $\cF_1,\cF_2,\cF_3$ that are linear with respect to the coordinates $\theta_1,\theta_2$.  The condition $d \theta_{\lambda} \wedge d \theta_{\lambda+1} > 0$ implies that these foliations are pairwise transverse.

\subsection{Branched covering}

Now consider a projective surface $X$ with a holomorphic branched covering
\[p: X \rightarrow \CP^2\]
that is generic with respect to the standard trisection $\CP^2$.  Such a branched covering always exists by choosing an embedding $X \hookrightarrow \CP^N$ and projecting onto a generic projective plane.  We can then pull back the standard trisection via $p$ to get a decomposition 
\[X = \widetilde{Z}_1 \cup \widetilde{Z}_2 \cup \widetilde{Z}_3\]
into three analytic polyhedra.  In general, the smooth decomposition is not a trisection, as the sectors may have 2-handles.  However, it accurately illustrates the geometric picture in the holomorphic setting.

Let $\cK \subset X$ be the ramification locus of $\pi$.  Assume that $p(\cK)$ is immersed and self-transverse and in general position with respect to the stratification induced by the trisection.  In particular, it intersects the central $T^2$ in $2b$ points and each $H_{\lambda} = S^1 \times \DD^2$ along arcs.  Positivity of intersection implies that these arcs are transverse to the foliation by holomorphic disks.

Now, consider the sector $Z_{\lambda}$ in $\CP^2$.  It lies in the open chart $U_{\lambda} = \CP^2 \setminus L_{\lambda}$, where $L_{\lambda} = \{z_{\lambda-1} = 0\}$ denotes the projective line at infinity with respect to this chart.  In addition, it is the analytic polyhedron defined by the holomorphic coordinate functions $z_{\lambda},z_{\lambda+1}$ in the chart $U_{\lambda}$.

The preimage $\widetilde{U}_{\lambda} = p^{-1}(U_{\lambda})$ is the complement of a hyperplane divisor in $X$, therefore it is an open Stein surface in $X$.    The preimage $\widetilde{Z}_{\lambda} = p^{-1}(Z_{\lambda})$ is an analytic polyhedron
\[\widetilde{Z}_{\lambda} = \{w \in \widetilde{U}_{\lambda} : \text{max}\left(|z_{\lambda} \circ p(w)|,|z_{\lambda+1} \circ p(w)| \right)  \leq 1 \}\]
in $\widetilde{U}_{\lambda}$, where $f_1 = z_{\lambda} \circ p$ and $f_2 = z_{\lambda+1} \circ p$ are holomorphic functions on the Stein surface $\widetilde{U}_{\lambda}$.  

The boundary of $\widetilde{Z}_{\lambda}$ is piecewise-smooth, with a corner at $p^{-1}(T^2)$.  Over the codimension-0 faces $\widetilde{H}_{\lambda}$ and $\widetilde{H}_{\lambda+1}$, the boundary is Levi-flat and foliated by compact holomorphic curves.  In particular, 
\begin{align*}
    \widetilde{H}_{\lambda} &= \bigcup_{\theta} \left\{ w \in \widetilde{U}_{\lambda} : z_{\lambda} \circ p (w) = e^{2 \pi i \theta} \right\} \cap \widetilde{Z}_{\lambda}\\
    \widetilde{H}_{\lambda+1} &= \bigcup_{\theta} \left\{ w \in \widetilde{U}_{\lambda} : z_{\lambda+1} \circ p (w) = e^{2 \pi i \theta} \right\} \cap \widetilde{Z}_{\lambda}
\end{align*}
If the branch locus $p(\cK)$ intersects $H_{\lambda}$ transversely, then these curves are nonsingular.

Next, the branch locus $p(\cK)$ intersects $T^2$ transversely.  Since $T^2$ is nullhomologous, the signed intersection of $p(\cK)$ with $T^2$ vanishes and there exist $b$ positive intersections and $b$ negative intersections for some $b$.  If $p$ has degree $n$ as a finite holomorphic map, then $p^{-1}(T^2)$ is a simple $n$-fold branched covering of $T^2$ over the $2b$ points.  In general, it may be disconnected.  In $\CP^2$, the torus $T^2$ is Lagrangian with respect to the Fubini-Study K\"ahler form.  The pullback $p^*(\omega_{FS})$ also vanishes along $\Sigma = p^{-1}(T^2)$, although this form is not symplectic.  Its square vanishes precisely along the ramification locus $\cK$.  To modify it to a symplectic form, it is necessary to add an exact 2-form $d \phi$ as in \cite[Proposition 10]{Auroux}.  The restriction of this 2-form to $\Sigma$ will have nonzero support in a neighborhood of the ramification locus $B = \Sigma \cap \cK$.  Consequently, this surface is no longer Lagrangian.  

Finally, a flat metric $g$ on $T^2$ pulls back to a singular flat metric on $\Sigma$.  The closed 1-forms $(d \theta_2, -d \theta_1, d \theta_1 - d \theta_2)$ pull back to harmonic 1-forms with respect to the conformal structure determined by the singular flat metric.  Moreover, the form $p^*(\beta_{\lambda})$ extends across $H_{\lambda}$ as the 1-form defining the Levi-flat foliation by holomorphic curves.  The fact that 
$$p^*(\beta_{\lambda}) \wedge p^*(\beta_{\lambda+1}) = p^*(\beta_{\lambda} \wedge \beta_{\lambda+1}) \geq 0$$
implies that these foliations meet transversely as well.  Thinking 4-dimensionally, this transversality condition is equivalent to the statement that $\Sigma$ is a convex corner of the analytic polyhedron $\widetilde{Z}_{\lambda}$.

The singular points $\Sigma \cap \cK$ are the preimages of the branch points on $T^2$.  We can describe this branching in local coordinates by identifying $T^2$ with $i \RR^2 \subset \CC^2$ and consider the complex surface
\[\{z^2 = w_1 \mp i w_2\} \subset \CC^3\]
for a positive (resp. negative) intersection point.  The surface $\Sigma$ is the intersection of $X$ with the subset $\{\text{Re}(w_1) = \text{Re}(w_2) = 0\}$.  The tangent plane at $0 \subset \Sigma$ is an isolated complex point of $\Sigma$, meaning that $T_0 \Sigma$ is a complex line with respect to the almost-complex structure on $X$ (and not simply a copy of $\RR^2$).  Complex lines come with an orientation, therefore the singular point is oriented and this agrees with the orientation of $p(\cK) \cap T^2$ downstairs.  But since this orientation is induced by the complex structure, it must extend in a compatible way with the foliation on $\widetilde{H}_{\lambda}$ by holomorphic curves.

Finally, the boundary of $\widetilde{Z}_{\lambda}$ can be smoothed to obtain a Stein-fillable contact structure.  The defining function $\phi_{\lambda} = \text{max}(|f_1|,|f_2|)$ is a continuous, plurisubharmonic function on $\widetilde{U}_{\lambda}$.   It can be $C^0$-approximated by a smooth, strictly plurisubharmonic function $\widetilde{\phi}_{\lambda}$.  The level set $\widetilde{Y}_{\lambda} = \widetilde{\phi}^{-1}_{\lambda}(1)$ naturally admits a Stein-fillable contact structure.

 
\section{Handlebody setup}

In order to construct the required symplectic structure over each handlebody $H_{\lambda}$, we need to construct an exact 2-form $d \mu_{\lambda}$ that orients the leaves of the foliation $\cF_{\lambda} = \text{ker}(\betat_{\lambda})$.

\subsection{Morse form foliations}

We restrict attention to foliations that are defined as the kernel of a smooth, closed 1-form $\beta$.  In particular, a closed 1-form is {\it Morse} if near each singular point, there exists local coordinates and a function $f$ that is quadratic in these coordinates such that $\beta = df$.  Moreover, the closed 1-form $\beta$ is {\it intrinsically harmonic} if it is harmonic for some Riemannian metric.

We can apply various results on the topology of Morse form foliations using the following doubling argument.  Recall that $\betat_{\lambda} = \beta_{\lambda} + d(sf)$ near the boundary.  We can take the double $\#_g S^1 \times S^2$ of $H_{\lambda}$ and extend $\betat_{\lambda}$ as a closed 1-form $D(\betat_{\lambda})$ on the total space by assuming
\[D(\betat_{\lambda}) = \beta_{\lambda} + d(s^2f)\]
on a tubular neighborhood of $\Sigma$.  This 1-form has Morse singularities at the singular points $B \subset \Sigma$ that are of index 1 or 2, according to the sign of $f$.  In addition, the induced foliation is topologically conjugate to $\cF_{\lambda}$.

There are significant restrictions on the topology of a foliation defined by a closed, Morse 1-form.  For example, the foliation has trivial holonomy.  The leaves of a closed Morse 1-form are either {\it compact} or {\it noncompact}.  A finite number of {\it singular} leaves intersect the vanishing locus of the 1-form, while all remaining leaves are {\it nonsingular}.  Given a closed Morse 1-form on a compact $n$-manifold $M$, there is a decomposition $M = M_c \cup M_{\int}$ into two compact $n$-manifolds such that the interior $M_c$ is the union of all compact leaves, the interior of $M_{\infty}$ is the union of all noncompact leaves, and $M_c \cap M_{\infty} = \partial M_c \cap \partial M_{\infty}$ is a subvariety that is the union of some compactified components of noncompact, singular leaves \cite[Proposition 1]{FKL}.

A {\it closed transversal} to a foliation is closed, smooth curve that is everywhere positively-transverse to the foliation. 

\subsection{Tautness of the foliation}

Recall that a foliation $\cF$ on $M$ is {\it taut} if for every point $p \in M$, there is a closed transversal to $\cF$ through $p$.  In this subsection, we will prove that the foliation $\cF_{\lambda}$ on $H_{\lambda}$ satisfy the following stronger version of tautness.

\begin{proposition}
\label{prop:taut-annulus}
For each point $p \in H_{\lambda}$, there exists 
\begin{enumerate}
    \item an embedded, closed transversal $\gamma_p$ to $\cF_{\lambda}$ through $p$,
    \item an embedded annulus $A_p \cong S^1 \times [0,1]$ consisting of a family of embedded, closed transversals to $\cF_{\lambda}$ such that the unoriented boundary is $\partial A_p = \gamma_p \cup \gamma$, where $\gamma$ is an embedded curve in $\partial H_{\lambda} = \Sigma$.
\end{enumerate}
\end{proposition}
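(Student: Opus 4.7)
The plan is to combine Calabi's criterion for intrinsically harmonic Morse 1-forms on a closed surface with the triviality of holonomy for foliations defined by closed 1-forms. I would proceed in three steps.

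First, I would handle points on $\Sigma$ directly. The hypothesis that $\beta$ is intrinsically harmonic on $\Sigma$ with only nondegenerate zeros, together with Calabi's theorem, furnishes through every regular point $q\in\Sigma$ an embedded smooth closed curve $\gamma_q\subset\Sigma$ with $\beta(\dot\gamma_q)>0$ everywhere; since the zeros of a harmonic 1-form on a surface are necessarily saddles, the non-extremal hypothesis of Calabi is automatic. Because $\widetilde{\beta}_\lambda|_\Sigma=\beta$, any such $\gamma_q$ is automatically a closed transversal to $\cF_\lambda$ in $H_\lambda$.

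Second, for a general $p\in H_\lambda$ I would study the leaf $L_p$ of $\cF_\lambda$ through $p$. The central claim is that $L_p$ meets $\Sigma$ at some point $q$ with $\beta(q)\neq 0$. I would establish this by passing to the doubled closed 1-form $D(\widetilde{\beta}_\lambda)$ on the closed manifold $DH_\lambda\cong\#_g(S^1\times S^2)$, whose singular set is exactly $\{b_i\}\subset\Sigma$, and applying the decomposition $DH_\lambda=M_c\cup M_\infty$ from \cite[Proposition 1]{FKL}. A compact leaf of $\cF_\lambda$ wholly in the interior of $H_\lambda$ would bound a region (using $H_2(H_\lambda)=0$) and would be one side of a closed-leaf cylinder that, by the absence of holonomy and of interior singularities, must terminate in a second parallel leaf that cannot exist in the handlebody; purely interior noncompact leaves are ruled out by a recurrence argument for the dual flow, again using that all singularities of $D(\widetilde{\beta}_\lambda)$ lie on $\Sigma$. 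A small perturbation of $p$ along $L_p$ then arranges for $q$ to be regular.

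Third, with $\gamma_q\subset\Sigma$ from Step 1 and an embedded arc $\delta\subset L_p$ from $q$ to $p$, I would propagate $\gamma_q$ along $\delta$ using the local product charts for $\cF_\lambda$. Since $\widetilde{\beta}_\lambda$ is closed, $\cF_\lambda$ has trivial linear holonomy, so this propagation produces a smooth isotopy $\{\gamma_t\}_{t\in[0,1]}$ of embedded closed transversals from $\gamma_0=\gamma_q$ to a closed transversal $\gamma_1=\gamma_p$ through $p$. The trace of the isotopy is the embedded annulus $A_p\cong S^1\times[0,1]$ with $\partial A_p=\gamma_p\cup\gamma_q$ and $\gamma_q\subset\Sigma$.

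The main obstacle is the claim in Step 2 that no leaf is wholly interior. Morse-form foliations on closed manifolds can contain minimal components with intricate recurrence, and ruling out such behavior inside $H_\lambda$ requires carefully coupling $H_2(H_\lambda)=0$ with the localization of the singularities of $D(\widetilde{\beta}_\lambda)$ on $\Sigma$ and the collar model $\widetilde{\beta}_\lambda=\beta_\lambda+d(sf)$. A secondary technical point in Step 3 is keeping the swept annulus embedded; this is handled by choosing $\delta$ embedded in $L_p$ (always possible since $L_p$ is a connected surface) and carrying out the propagation inside a tubular neighborhood of $\delta$ small enough that the closed transversals $\gamma_t$ remain pairwise disjoint.
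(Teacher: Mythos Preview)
Your three-step outline (Calabi on $\Sigma$, every leaf reaches $\Sigma$, then push a boundary transversal in along a leaf) is exactly the paper's strategy: the paper packages Step~1 as Lemma~\ref{lemma:Calabi}, Step~2 as Proposition~\ref{prop:all-leaves-touch}, and Step~3 as a one-line ``leaf-preserving isotopy whose trace is $A_p$.'' So structurally you are on target.

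The substantive difference is in how you handle the compact-leaf subcase of Step~2. The paper does \emph{not} argue via a bounded region and a Reeb-stability ``cylinder''; instead it first shows there are no torus leaves (a torus in a handlebody bounds a solid torus $Q$, and $[\widetilde\beta_\lambda]=0\in H^1(Q,\partial Q)$ forces an interior zero), and then uses an Euler-class pairing to show that any closed compact leaf $L$ disjoint from $\Sigma$ satisfies $\chi(L)=\langle e(\cF_\lambda),[L]\rangle=0$, hence is a torus, a contradiction. Your argument instead takes the region $R\subset\mathrm{int}(H_\lambda)$ bounded by $L$ and tries to propagate the product structure inward. This can be made to work, but the sentence ``must terminate in a second parallel leaf that cannot exist in the handlebody'' is not a proof: you need to say why the absence of singularities in $R$ forces $R=R_c$ in the FKL decomposition, hence $R$ is an $L$-bundle over a compact $1$-manifold, and then derive the contradiction from $\partial R=L$ being connected. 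As written, this step is a gap; the paper's Euler-class route is shorter and avoids the bundle analysis entirely.

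For noncompact leaves the paper again uses \cite[Proposition~1]{FKL}: the closure of a noncompact leaf contains a compactified singular-leaf component, the singularities all sit on $\Sigma$, and the collar model $\widetilde\beta_\lambda=\beta_\lambda+d(sf)$ shows the local product structure forces the original leaf to meet $\Sigma$ as well. Your ``recurrence argument for the dual flow'' is pointing in the right direction but is too vague to stand on its own; you should invoke the FKL decomposition explicitly, as the paper does.
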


The proposition follows easily from the following two results.

\begin{lemma}
\label{lemma:Calabi}
Through every point $q \in \Sigma \setminus B$, there exists an embedded curve $\gamma \subset \Sigma$ that is a closed transversal to the kernel foliation of $\beta_{\lambda}$.
\end{lemma}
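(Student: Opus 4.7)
The statement is a surface version of Calabi's classical theorem characterizing intrinsically harmonic 1-forms (E.\ Calabi, \emph{An intrinsic characterization of harmonic one-forms}, 1969): a closed Morse 1-form on a closed manifold whose zeros are all of saddle type is intrinsically harmonic if and only if it is \emph{transitive} --- every regular point lies on a closed curve along which the form is nowhere zero. Since $\beta_{\lambda}$ is harmonic for $\lambda^2 g$ with only nondegenerate saddle zeros, Calabi's theorem applies and provides, through the given $q \in \Sigma \setminus B$, a (possibly immersed) closed transversal $\gamma_0$ to $\cF_{\lambda} = \ker \beta_{\lambda}$.

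The plan is therefore to invoke Calabi's theorem and then promote $\gamma_0$ to an embedded curve. The geometric mechanism behind the first step is the following. Set $V_{\lambda} = \beta_{\lambda}^{\sharp}$, the metric-dual vector field, which is transverse to $\cF_{\lambda}$ at every regular point since $\beta_{\lambda}(V_{\lambda}) = |\beta_{\lambda}|^2 > 0$. Harmonicity of $\beta_{\lambda}$ is equivalent to $d(\ast_{\lambda^2 g} \beta_{\lambda}) = 0$, which via the identity $\iota_{V_{\lambda}}\, d\mathrm{vol} = \ast \beta_{\lambda}$ translates to $V_{\lambda}$ being divergence-free. Its flow is therefore area-preserving, and Poincar\'e recurrence furnishes orbits through $q$ that return arbitrarily close to $q$. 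At a suitable return, a short closing arc can be chosen inside the flow box of $V_{\lambda}$ so that its tangent stays in the open half-plane $\{\beta_{\lambda} > 0\}$; concatenating with the recurrent orbit segment yields a closed curve through $q$ everywhere transverse to $\cF_{\lambda}$.

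It remains to embed. At each transverse self-intersection of $\gamma_0$, both crossing strands have tangent vectors in the open positive half-plane $\{v \in T_p \Sigma : \beta_{\lambda}(v) > 0\}$; performing the oriented resolution compatible with the coorientation induced by $\beta_{\lambda}$ produces a pair of non-crossing arcs whose tangents remain in this half-plane, so the resulting 1-cycle is still transverse to $\cF_{\lambda}$. After finitely many such surgeries we obtain a disjoint union of embedded closed transversals, and the component containing $q$ --- a regular point not involved in any surgery --- is the required $\gamma$. The principal difficulty is thus Calabi's theorem itself, a nontrivial dynamical statement combining area-preservation with the Morse-form decomposition of $\Sigma$ into periodic and quasi-minimal components; the embedding step is elementary once a transversal through $q$ has been secured.
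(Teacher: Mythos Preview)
Your proposal is correct and follows essentially the same approach as the paper: invoke Calabi's transitivity criterion for intrinsically harmonic forms to obtain an immersed closed transversal through $q$, then resolve self-intersections in a manner compatible with the coorientation and take the component through $q$. Your additional discussion of the area-preserving flow and Poincar\'e recurrence is useful exposition of why Calabi's theorem holds, but it does not change the strategy.
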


\begin{proof}
By assumption, $\beta_{\lambda}$ is a harmonic 1-form on $\Sigma$ and therefore Calabi's condition ensures there is an immersed closed transversal through each nonsingular point \cite{Calabi}.  By a perturbation, we can assume that the curve is self-transverse and resolve the self-intersections preserving the fact that is a (possibly disconnected) embedded closed transversal.  If this closed transversal is disconnected, choose the component through $p$.
\end{proof}

\begin{proposition}
\label{prop:all-leaves-touch}
Every leaf of $\cF_{\lambda}$ intersects the boundary $\partial H_{\lambda} = \Sigma$.
\end{proposition}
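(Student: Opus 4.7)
The plan is to argue by contradiction: suppose $L$ is a leaf of $\cF_\lambda$ entirely contained in $\mathrm{int}(H_\lambda)$. The natural framework is to pass to the double $M=DH_\lambda\cong\#_g(S^1\times S^2)$ with its closed Morse 1-form $\hat\beta=D(\widetilde\beta_\lambda)$, whose zeros are exactly $B\subset\Sigma$. Then $L$ is also a leaf of $\hat\beta$ on $M$, lying entirely in $\mathrm{int}(H_\lambda)\subset M$. I will handle the noncompact and compact cases separately.

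For noncompact $L$, I invoke the FKL structure theorem \cite[Prop.~1]{FKL}, which guarantees that $\overline{L}$ contains a singular leaf, and every singular leaf passes through some zero $b\in B\subset\Sigma$. The collar form $\widetilde\beta_\lambda=\beta_\lambda+d(sf)$ yields the local model $\hat\beta \sim d(xy\pm s^2)$ near $b$, so nearby leaves are the level sets $\{xy\pm s^2=c\}$, and $L$ corresponds to a single value $c_L$. Accumulation of $L$ at $b=(0,0,0)$ forces $c_L=0$, placing $b$ on $L$ itself and contradicting $L\cap\Sigma=\emptyset$.

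For compact $L$, it is a closed oriented surface (the orientation from the coorientation supplied by the nonvanishing $\widetilde\beta_\lambda$). Since $H_2(H_\lambda;\ZZ)=0$, $L$ bounds a compact submanifold $\bar U\subset H_\lambda$, so $[L]=0$ in $H_2(M;\RR)$. On the other hand, by Mayer--Vietoris $[\hat\beta]\in H^1(M;\RR)$ corresponds via the diagonal to $[\widetilde\beta_\lambda]\in H^1(H_\lambda;\RR)$, which restricts to the nonzero class $[\beta_\lambda]\in H^1(\Sigma;\RR)$ (a harmonic form with isolated nondegenerate zeros cannot be exact on the closed surface $\Sigma$). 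For a closed Morse 1-form on a closed 3-manifold with nonzero cohomology class, a compact leaf represents a nonzero class in $H_2(M;\RR)$: in the rational case $L$ is a regular fiber Poincar\'e dual to $[\hat\beta]$, and the irrational case follows by small rational approximation combined with the persistence of the cylinder components of $M_c$ under perturbation (FKL). This yields $[L]\neq 0$, contradicting $[L]=0$.

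The main obstacle is rigorously establishing the nonvanishing of $[L]$ for compact leaves when $[\hat\beta]$ is irrational, since the clean Poincar\'e-duality identification of compact leaves with fibers applies directly only in the rational setting. A fallback is a max/min argument on the bounded region: if one can show $\widetilde\beta_\lambda|_{\bar U}$ is exact (writing it as $dh$), then $h$ is a submersion constant on $\partial \bar U = L$, forcing $h$ constant on the compact $\bar U$ and contradicting $dh\neq 0$ --- this requires vanishing of the class $[\widetilde\beta_\lambda|_{\bar U}]\in H^1(\bar U,L;\RR)\cong H_2(\bar U;\RR)$, which can be extracted from irreducibility of $H_\lambda$ and the nonexistence of closed incompressible surfaces in a handlebody by inductively compressing $L$ down to the $S^2$ case where $\bar U \cong B^3$.
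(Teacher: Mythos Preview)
Your noncompact case follows the paper's line: both use \cite[Proposition~1]{FKL} to place a singular leaf in $\overline L$ and then exploit that all singularities lie on $\Sigma$. One imprecision: a noncompact leaf may re-enter the Morse chart many times, so ``$L$ corresponds to a single value $c_L$'' is not literally correct. The paper instead picks a nonsingular point $x\in\Sigma\setminus B$ on the limiting singular leaf and uses that the foliation is a local product there, forcing $L$ to meet $\Sigma$ near $x$. Your local-model argument can be repaired to the same effect, since every component of $\{F=c\}\cap\{s\ge 0\}$ for $c$ close to $0$ already meets $\{s=0\}$.

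The compact case has a genuine gap. Your principal claim---that a compact leaf of a closed Morse $1$-form with $[\hat\beta]\neq 0$ is homologically nontrivial---fails already in the rational case you invoke: $L$ is only a \emph{component} of a regular fiber, and an individual component can be nullhomologous even though the full fiber is Poincar\'e dual to $[\hat\beta]$. The irrational perturbation argument inherits the same defect. Your fallback is aimed at the right target, namely $[\widetilde\beta_\lambda|_{\bar U}]=0$ in $H^1(\bar U,L)\cong H_2(\bar U)$, but ``inductively compressing $L$ down to the $S^2$ case'' does not establish this: each compression changes both $L$ and $\bar U$, and there is no mechanism offered for tracking $H_2(\bar U)$ through the surgeries. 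The paper takes a different and self-contained route: it introduces an Euler class $e(\cF_\lambda)\in H_1(H_\lambda,B)$ and proves $\chi(L)=\langle e(\cF_\lambda),[L]\rangle$ for any compact leaf; a closed leaf disjoint from $\Sigma$ has $[L]=0$ in $H_2(H_\lambda,\Sigma\setminus B)$, so $\chi(L)=0$ and $L\cong T^2$. Then one uses that every torus in a handlebody bounds a solid torus $Q$, for which $H^1(Q,\partial Q)\cong H_2(Q)=0$ is immediate, and the max/min argument finishes.
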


This follows immediately by combining Lemmas \ref{lemma:compact-intersects-boundary} and \ref{lemma:noncompact-intersects-boundary}.  We defer these proofs until after the following corollary and the proof of the main proposition.

\begin{corollary}
The doubled 1-form $D(\betat_{\lambda})$ on $D(H_{\lambda})$ is intrinsically harmonic.
\end{corollary}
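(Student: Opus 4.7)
The plan is to invoke Calabi's criterion for intrinsic harmonicity: a closed Morse 1-form on a closed manifold $M^n$ is intrinsically harmonic if and only if it has no critical points of Morse index $0$ or $n$ and every regular point lies on a closed transversal to the induced foliation. Since the double $D(H_\lambda) \cong \#_g\, S^1 \times S^2$ is closed of dimension $3$, it suffices to verify these two conditions for $D(\betat_\lambda)$.

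The Morse and index conditions I would check by a local computation near each $b_i \in B$. Since $\beta$ is harmonic with nondegenerate zero, one may write $\beta = df_i$ in local coordinates near $b_i$ for a Morse function $f_i$ of saddle index $1$, and by hypothesis $f \equiv \pm 1$ on a neighborhood of $b_i$. Hence $D(\betat_\lambda) = d(f_i \pm s^2)$ there, giving a single nondegenerate critical point at $(b_i, 0) \in \Sigma$ of index $2$ if $f(b_i) = +1$ and of index $1$ if $f(b_i) = -1$. Away from $B$, the form is nonvanishing: on $H_\lambda$ and its mirror copy outside the collar this is the standing nonvanishing hypothesis on $\betat_\lambda$, and on the collar $\{|s|<\epsilon\}\setminus B$ the local expression $df_i \pm 2s\,ds$ has no zero.

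For the transversal condition, let $q \in D(H_\lambda) \setminus B$ be a regular point. If $q$ lies in the interior of $H_\lambda$ (or, by the $\ZZ/2$-symmetry of $D(\betat_\lambda)$ under $s\mapsto -s$, in the interior of the mirror copy), Proposition \ref{prop:taut-annulus} produces an embedded closed transversal $\gamma_q \subset H_\lambda$ of $\cF_\lambda$ through $q$. Outside the collar, where $D(\betat_\lambda) = \betat_\lambda$, this is already a closed transversal of the doubled foliation; inside the collar, where $\betat_\lambda = \beta_\lambda + d(sf)$ has been replaced by $\beta_\lambda + d(s^2 f)$ in order to introduce saddles on $\Sigma$, one transports $\gamma_q$ via the topological conjugacy between the two induced foliations asserted in the paper's discussion of Morse form foliations. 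If instead $q \in \Sigma \setminus B$, then Lemma \ref{lemma:Calabi} supplies a closed transversal $\gamma \subset \Sigma$ to $\ker \beta$, and because $D(\betat_\lambda)|_{T\Sigma} = \beta$ along $\Sigma$, the same curve $\gamma$ is a closed transversal of the doubled foliation. Calabi's theorem then yields a Riemannian metric on $D(H_\lambda)$ with respect to which $D(\betat_\lambda)$ is harmonic.

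The main subtlety is the collar, where the modification from $\beta_\lambda + d(sf)$ to $\beta_\lambda + d(s^2 f)$ breaks the literal agreement of the foliations; here one must either invoke the topological conjugacy to carry transversals across, or choose the transversals in Proposition \ref{prop:taut-annulus} to lie in $H_\lambda$ minus a small collar and use the annulus $A_q$ only to reach $\Sigma$. Everything else is a direct application of the two preceding lemmas and the local Morse computation.
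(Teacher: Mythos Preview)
Your proof is correct and follows the paper's approach of verifying Calabi's criterion. The paper's version is a single sentence: since every leaf meets $\Sigma$ (Proposition~\ref{prop:all-leaves-touch}) and $\beta_\lambda$ is intrinsically harmonic on $\Sigma$, Calabi's condition holds; you supply more detail (the explicit Morse-index check and the collar discussion) and invoke Proposition~\ref{prop:taut-annulus} rather than Proposition~\ref{prop:all-leaves-touch}, but the substance is the same.
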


\begin{proof}
Since each leaf intersects $\Sigma$ and $\beta_{\lambda}$ is intrinsically harmonic on $\Sigma$, then $D(\betat_{\lambda})$ satisfies Calabi's condition.
\end{proof}

\begin{proof}[Proof of Proposition \ref{prop:taut-annulus}]
The boundary foliation admits an embedded closed transversal that intersects every leaf (Lemma \ref{lemma:Calabi}).  Since each leaf of $\cF_{\lambda}$ is connected to the boundary (Proposition \ref{prop:all-leaves-touch}), we can find $\gamma_p$ by a leaf-preserving isotopy.  The trace of this isotopy is the required annulus $A_p$.
\end{proof}

We now address the case of compact leaves.

\begin{lemma}
\label{lemma:no-tori}
The foliation $\cF_{\lambda}$ on $H_{\lambda}$ has no $T^2$ leaves.
\end{lemma}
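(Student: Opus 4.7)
The plan is a proof by contradiction. Suppose $L \cong T^2$ is a compact leaf of $\cF_\lambda$, so that $\widetilde{\beta}_\lambda$ pulls back to zero on $TL$.

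Since $H_\lambda$ is a handlebody, $H_2(H_\lambda;\ZZ)=0$, so $L$ is null-homologous and separates $H_\lambda$ as $A \cup_L B$, where $\partial A = L$ (the component disjoint from $\Sigma$) and $\partial B = L \sqcup \Sigma$.

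Next I would show that $A$ is a solid torus. Handlebodies contain no closed incompressible surfaces, so $L$ admits a compressing disk $D \subset H_\lambda$. A standard argument (take a regular neighborhood of $L \cup D$, observe that its boundary component other than $L$ is a sphere, and apply the irreducibility of $H_\lambda$ to cap it off by a ball) shows that $L$ bounds a solid torus on whichever side of $L$ contains $D$. Because $B$ has two boundary components it cannot be a solid torus, so $D \subset A$ and hence $A \cong S^1 \times \DD^2$.

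Finally I would derive the contradiction by analyzing $\omega := \widetilde{\beta}_\lambda|_A$, a closed and nonvanishing 1-form on the solid torus $A$. Since $L$ is a leaf, $[\omega]$ restricts to zero in $H^1(\partial A;\RR)$; but the restriction map $H^1(A;\RR) \to H^1(\partial A;\RR)$ is injective (the generator of $H^1(A)=\RR$, dual to the core circle, restricts to the nontrivial longitude class in $H^1(T^2;\RR)$). Hence $[\omega]=0$ and $\omega = dg$ for some smooth $g: A \to \RR$. Then $g$ is constant on $\partial A$ (since $dg|_{T\partial A}=0$), and the nonvanishing of $dg$ along the connected boundary $\partial A = T^2$ forces the inward normal derivative $dg(\nu)$ to have a single, constant sign. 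Consequently one of the extrema of $g$ on the compact manifold $A$ is attained in the interior, producing an interior critical point of $g$ and contradicting that $\widetilde{\beta}_\lambda$ is nonvanishing.

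The principal obstacle is the classical 3-manifold topology step identifying $A$ as a solid torus; once that is secured, the rest is a short cohomological and elementary-calculus argument.
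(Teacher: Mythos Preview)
Your approach mirrors the paper's exactly: show that $L$ bounds a region $Q$ with $H^1(Q,\partial Q)=0$, deduce that $\betat_\lambda|_Q = d\phi$ with $\phi$ constant on $\partial Q$, and locate an interior critical point. The paper simply asserts that $Q$ is a solid torus and invokes $H^1_{DR}(Q,\partial Q)=0$; your use of the injectivity of $H^1(A)\to H^1(\partial A)$ is the same statement in disguise, and your interior-extremum argument is in fact more carefully stated than the paper's one-line version.

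One small gap: the ``standard argument'' you sketch does \emph{not} force $D\subset A$. If $D\subset B$, compressing $L$ yields a sphere $S\subset B$, and irreducibility gives a ball bounded by $S$ on the side \emph{away} from $\Sigma$, namely $A\cup\nu(D)$; then $A$ is the complement of a (possibly knotted) properly embedded $D^2\times I$ in that ball, i.e.\ a knot exterior rather than a solid torus. Fortunately this does not matter for your proof. Your later step only needs the injectivity of $H^1(A;\RR)\to H^1(\partial A;\RR)$, equivalently $H_2(A;\RR)=0$, and this follows directly from Mayer--Vietoris for $H_\lambda = A\cup_L B$: the map $H_2(L)\to H_2(A)\oplus H_2(B)$ vanishes since $L=\partial A$ is a boundary component in each piece, so $H_2(A)\oplus H_2(B)\hookrightarrow H_2(H_\lambda)=0$. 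With this correction your argument is complete and coincides with the paper's.
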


\begin{proof}
Every torus in a 1-handlebody is compressible and moreover bounds a solid torus $Q$.  Then $[\betat_{\lambda}] = [0] \in H^1_{DR}(Q,\partial Q)$ and so there is a function $\phi: Q \rightarrow \RR$ such that $\phi(\partial Q) = 0$ and $\betat_{\lambda} = d \phi$ on $Q$.  This function attains a maximum at some point $x \in Q$ and therefore $(\betat_{\lambda})_x = d \phi_x = 0$, which is a contradiction.
\end{proof}

Next, we can use the Euler class of the foliation $\cF_{\lambda}$ to show that every closed, compact leaf must be an (excluded) torus.  Using the doubling argument, we get isomorphisms
\begin{align*}
    H_1(H,B) &\cong H_1(Y,B) \\
    H_2(H,\Sigma \setminus B) &\cong H_2(Y \setminus B)
\end{align*}
We can therefore use the intersection pairing
\[H_1(Y,B) \times H_2(Y \setminus B) \rightarrow H_0(Y \setminus B) \cong H_0(Y) \cong \ZZ\]
to define an intersection pairing
\[ \langle , \rangle: H_1(H_{\lambda},B) \times H_2(H_{\lambda}, \Sigma \setminus B) \rightarrow \ZZ \]

Choose a vector field $v_{\lambda}$ directing the kernel of $\beta_{\lambda}$ on $\Sigma$ and extend it to a section of $\text{ker}(\betat_{\lambda})$ over $H_{\lambda}$.  Generically, this section will vanish along a smooth 1-complex representing a well-defined class Euler class $e(\cF_{\lambda}) \in H_1(H_{\lambda},B)$ of the foliation.

\begin{lemma}
\label{lemma:Euler}
Let $L \in H_{\lambda}$ be a compact leaf $L$ of $\cF_{\lambda}$.  Then the Euler characteristic of $L$ is given by
\[\langle e(\cF_{\lambda}), [L] \rangle \]
\end{lemma}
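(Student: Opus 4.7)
The plan is to apply Poincar\'e--Hopf to the compact oriented surface $L$ using the restriction of $v_\lambda$. The crucial identification is that $T\cF_\lambda = \ker(\betat_\lambda)$ is a naturally oriented rank-$2$ real bundle on $H_\lambda$, and that for any leaf $L$ of $\cF_\lambda$ the tautological identification $T\cF_\lambda|_L = TL$ holds as oriented $2$-plane bundles. Consequently $v_\lambda|_L$ is a section of $TL$, and its local index at any zero $p$ coincides with the local index of $v_\lambda$ as a section of $T\cF_\lambda$ at $p$.

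I would next analyze the geometry of $v_\lambda$ along $\partial L = L \cap \Sigma$. The foliation $\cF_\lambda$ is tangent to $\Sigma$ exactly at the zeros $B$ of $\beta_\lambda$, since the only way $T\Sigma$ can coincide with $\ker(\betat_\lambda)$ at a boundary point is for $\betat_\lambda|_{T\Sigma} = \beta_\lambda$ to vanish there. Hence $\partial L \subset \Sigma \setminus B$, and at each boundary point $T(\partial L) = TL \cap T\Sigma$ equals $\ker(\beta_\lambda)$. Because $v_\lambda$ directs $\ker(\beta_\lambda)$ on $\Sigma \setminus B$, it is tangent to $\partial L$ and nowhere zero along $\partial L$. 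Doubling $L$ along $\partial L$ produces a closed oriented surface $DL$ with $\chi(DL) = 2\chi(L)$; since $v_\lambda$ is tangent to the doubling locus it extends continuously to a vector field $Dv_\lambda$ on $DL$ whose zeros come in matched pairs of equal index (orientation-reversal of the surface preserves the local index because it reverses both the source and target orientations of the Gauss map). Standard Poincar\'e--Hopf on $DL$ then yields
\[\chi(L) = \sum_{p \in \mathrm{int}(L)} \mathrm{ind}_p(v_\lambda|_L).\]

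Finally, after a generic perturbation of $v_\lambda$ supported away from $\Sigma$, the $1$-complex $Z(v_\lambda)$ representing $e(\cF_\lambda)$ meets $L$ transversely in a finite set of interior points. Each such point is an isolated nondegenerate zero of $v_\lambda|_L$, and under the identification $T\cF_\lambda|_L = TL$ the oriented intersection sign of $Z(v_\lambda) \pitchfork L$ at $p$ agrees with $\mathrm{ind}_p(v_\lambda|_L)$. Since $\partial Z(v_\lambda) \subset B$ and $\partial L \subset \Sigma \setminus B$ are disjoint, the pairing $\langle e(\cF_\lambda),[L]\rangle$ is well defined and equals this signed count, giving $\chi(L) = \langle e(\cF_\lambda),[L]\rangle$. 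The main subtlety is the boundary version of Poincar\'e--Hopf where $v_\lambda$ is tangent rather than transverse to $\partial L$ (handled cleanly by the doubling); a secondary point is verifying that the orientation conventions used to define the intersection pairing match those used to compute the local index.
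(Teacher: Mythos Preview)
Your argument is correct and follows the same approach as the paper: restrict the section $v_\lambda$ of $\ker(\betat_\lambda)$ to $L$, identify it with a tangent vector field, and read off $\chi(L)$ via Poincar\'e--Hopf as the signed count of zeros, which is exactly the intersection $\langle e(\cF_\lambda),[L]\rangle$. Your careful handling of the boundary case (noting $v_\lambda$ is tangent and nonvanishing along $\partial L \subset \Sigma \setminus B$, then doubling) fills in detail that the paper's one-sentence proof leaves implicit, but the underlying method is the same.
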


\begin{proof}
A section $s$ of $\text{ker}(\beta_{\lambda})$, restricted to $L$, is a vector field on $L$ and the Euler characteristic of $L$, which is therefore given by the signed count of the zeros of $s|_L$, is given by the intersection of $e(\cF_{\lambda})$ with $L$.
\end{proof}

\begin{lemma}
\label{lemma:compact-intersects-boundary}
Every compact leaf of the foliation $\cF_{\lambda}$ intersects $\Sigma$.
\end{lemma}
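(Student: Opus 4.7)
My plan is to prove this by contradiction, combining the two preceding lemmas with the fact that $H_\lambda$ is a handlebody. Suppose for contradiction that some compact leaf $L$ of $\cF_\lambda$ is contained in the interior of $H_\lambda$, i.e., disjoint from $\Sigma$ (and in particular from $B \subset \Sigma$).

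First I would locate the class $[L]$ in the relevant homology group. Since $L$ is closed and lies in $\text{int}(H_\lambda)$, its fundamental class is an absolute homology class in $H_2(H_\lambda;\ZZ)$ whose image under the inclusion of pairs gives $[L] \in H_2(H_\lambda, \Sigma \setminus B;\ZZ)$. Because $H_\lambda$ is a 1-handlebody, it deformation retracts onto a wedge of circles, so $H_2(H_\lambda;\ZZ) = 0$. By naturality, $[L] = 0$ in the relative group as well.

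Next I would apply Lemma \ref{lemma:Euler} to conclude that $\chi(L) = \langle e(\cF_\lambda), [L]\rangle = 0$. The leaf $L$ is orientable, since the foliation $\cF_\lambda = \text{ker}(\betat_\lambda)$ is coorientable via the closed nonvanishing 1-form $\betat_\lambda$ and the ambient handlebody $H_\lambda$ is orientable. A closed, orientable surface with vanishing Euler characteristic is a torus, directly contradicting Lemma \ref{lemma:no-tori}. Hence no such interior compact leaf exists.

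The argument is essentially a two-line homological calculation, so I do not expect a serious obstacle; the one bookkeeping item to verify is that the pairing of Lemma \ref{lemma:Euler} really does vanish on classes in $H_2(H_\lambda, \Sigma \setminus B;\ZZ)$ lifted from $H_2(H_\lambda;\ZZ) = 0$, which follows from the naturality of the cap/intersection product. The main conceptual step is simply the observation that the hypothesis "$L$ disjoint from $\Sigma$" is precisely what places $[L]$ in this vanishing subgroup, after which Lemmas \ref{lemma:Euler} and \ref{lemma:no-tori} do all the work.
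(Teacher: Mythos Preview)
Your proof is correct and follows essentially the same approach as the paper: a compact leaf disjoint from $\Sigma$ would be closed, hence represent the zero class in $H_2(H_\lambda,\Sigma\setminus B)$ since $H_2(H_\lambda)=0$, so Lemma~\ref{lemma:Euler} forces $\chi(L)=0$ and Lemma~\ref{lemma:no-tori} gives the contradiction. You have simply spelled out the justification for $[L]=0$ and the orientability of $L$ that the paper leaves implicit.
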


\begin{proof}
If $L$ is a compact, closed, co-oriented hence oriented leaf, then by Lemma \ref{lemma:Euler} its Euler characteristic vanishes since $[L] = 0 \in H_2(H_{\lambda},\Sigma \setminus B)$.  But this implies it is a torus, which is excluded by Lemma \ref{lemma:no-tori}.
\end{proof}

We now finish the proof of Proposition \ref{prop:all-leaves-touch} by excluding noncomapct leaves.

\begin{lemma}
\label{lemma:noncompact-intersects-boundary}
Every noncompact leaf of the foliation $\cF_{\lambda}$ intersects $\Sigma$.
\end{lemma}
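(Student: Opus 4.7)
I would argue by contradiction: assume $L$ is a noncompact leaf of $\cF_\lambda$ with $L \cap \Sigma = \emptyset$, locate an accumulation point $q$ of $L$ in $\Sigma \setminus B$, and then use transversality of the foliation to $\Sigma$ at $q$ to conclude that $L$ itself must meet $\Sigma$, contradicting the hypothesis.

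To find the accumulation point, I pass to the double $D(H_\lambda)$ on which $D(\widetilde{\beta}_\lambda)$ is a Morse closed $1$-form whose zero set is exactly $B \subset \Sigma$. Since $L \cap \Sigma = \emptyset$, the leaf $L$ is still a noncompact leaf of the doubled foliation and lies entirely inside one copy of $H_\lambda$. Applying the structure theorem for Morse closed $1$-form foliations \cite[Proposition 1]{FKL}, $L$ is contained in a minimal component $N$, a compact submanifold of $D(H_\lambda)$ whose boundary $\partial N$ is a finite union of leaves, each either compact or singular (i.e., passing through some $b \in B$). By minimality $L$ is dense in $N$, so $\partial N \subseteq \overline{L}$, and every boundary leaf meets $\Sigma$: compact ones by Lemma \ref{lemma:compact-intersects-boundary} and singular ones because their critical points already lie in $B \subset \Sigma$. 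If $\partial N$ contains a compact leaf, its transverse intersection with $\Sigma$ is a nonempty $1$-submanifold of $\Sigma$, which cannot be contained in the finite set $B$ and therefore supplies the desired $q \in \Sigma \setminus B$. Otherwise $\partial N$ is composed entirely of singular leaves, and the local Morse model of $D(\widetilde{\beta}_\lambda)$ near some $b \in B \cap \partial N$, combined with density of $L$ in $N$, produces nearby regular plaques of $L$ that cross $\Sigma$ transversely along curves, again supplying such a $q$.

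At $q \in \overline{L} \cap (\Sigma \setminus B)$, the foliation $\cF_\lambda$ is transverse to $\Sigma$ because $\widetilde{\beta}_\lambda|_{T_q \Sigma} = \beta_\lambda(q) \neq 0$. I would choose a foliation box $U$ around $q$ in which the plaques of $\cF_\lambda$ are graphs over a disk and $\Sigma \cap U$ is a local transversal meeting each plaque in exactly one point. Because $q \in \overline{L}$, some $r \in L \cap U$ exists; the plaque of $L$ through $r$ then meets $\Sigma \cap U$ in a single point $r'$, which lies in $L \cap \Sigma$, yielding the contradiction. The main obstacle I anticipate is the intermediate step of ensuring $\overline{L} \cap (\Sigma \setminus B) \neq \emptyset$ in the case where $\partial N$ consists only of singular leaves meeting $\Sigma$ at the finite set $B$; this is what forces the careful local Morse-theoretic analysis near each $b \in B \cap \partial N$ described above.
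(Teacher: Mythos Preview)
Your strategy is the same as the paper's: locate an accumulation point of $L$ in $\Sigma \setminus B$ and then use that $\cF_\lambda$ is transverse to $\Sigma$ there. The paper, however, reaches that accumulation point much more directly and avoids the machinery you introduce.

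Rather than invoking minimal components and density, the paper simply cites \cite[Proposition~1]{FKL} to say that $\partial\overline{L}$ contains a compactified component $L'$ of a \emph{singular} leaf. (In particular, per the paper's summary of that result, the boundary between the compact and noncompact regions consists only of compactified singular leaves, so your case ``$\partial N$ contains a compact leaf'' is vacuous.) The decisive observation you are missing is that $L'\cap\Sigma$ is not merely a subset of the finite set $B$: because $\widetilde{\beta}_\lambda$ restricts to $\beta_\lambda$ on $\Sigma$, the intersection $L'\cap\Sigma$ is an entire compactified component $\widetilde{L}'$ of a singular leaf of the \emph{surface} foliation $\ker\beta_\lambda$. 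This is a $1$-dimensional object, so any point $x\in\widetilde{L}'\setminus B$ already lies in $\overline{L}\cap(\Sigma\setminus B)$. Your ``main obstacle'' --- the worry that $\overline{L}$ might meet $\Sigma$ only at $B$, requiring a local Morse analysis --- therefore never arises.

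Once such an $x$ is in hand, the paper's endgame is the same as yours: at $x$ the foliation is locally a product with the surface foliation, so $L$ must enter the neighborhood as a leaf meeting $\Sigma$. Your argument is correct, but the detour through minimal components and the case split can be replaced by the single sentence ``the singular leaf $L'\subset\overline{L}$ meets $\Sigma$ along a $1$-dimensional leaf of $\ker\beta_\lambda$.''
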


\begin{proof}
The boundary of the closure of a noncompact leaf $L$ contains a compactified component $L'$ of a singular leaf (by \cite[Proposition 1]{FKL}).  The set of accumulation points of $L$ in $L'$ is open in $L'$; in particular, the closure of the set of accumulation points contains the singular point.  The singularities all lie in $\Sigma$, hence $L'$ intersects $\Sigma$ and in particular it intersects $\Sigma$ along a compactified component $\widetilde{L}'$ of a singular, noncompact leaf of the kernel foliation of $\beta_{\lambda}$ on $\Sigma$.  

Choose some point $x$ in $\widetilde{L}'$.  Every neighborhood of $x$ contains $L$ and moreover, since $x \in \Sigma \setminus B$, the foliation is locally a product with the foliation on $\Sigma$.  In particular, $L$ must intersect this neighborhood as a leaf intersecting the boundary.
\end{proof}

\subsection{Thurston norm}

Just as compact leaves of a taut foliation are known to minimize the Thurston norm \cite{Thurston}, the nonsingular compact leaves of the foliation $\cF_{\lambda}$ minimize the (generalized) Thurston norm defined by Scharlemann \cite{Scharlemann}.  

To prove this, we first establish an embedding result that reduces the problem to the original case.
\begin{proposition}
\label{prop:foliated-handlebody-embedding}
Let $M$ be a compact 3-manifold with boundary and let $f: M \rightarrow S^1$ be a smooth map such that
\begin{enumerate}
    \item $f$ has no critical points in $M$, and
    \item $f|_{\partial M}$ has only nondegenerate critical points of index 1.
\end{enumerate}.  
Then there exists a fibered 3-manifold $\pi: Y \rightarrow S^1$ and an embedding $i: M \hookrightarrow Y$ such that $\pi \circ i = f$.
\end{proposition}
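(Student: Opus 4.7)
The plan is to construct $Y$ as an asymmetrically-extended double of $M$, in which the boundary-saddle obstructions appear as Morse critical points confined to the ``negative'' copy $M_-$, which I will then cancel in pairs without affecting the ``positive'' copy $M_+ = M$.

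I would first observe that at each boundary critical point $p$ of $f|_{\partial M}$, the normal derivative $\alpha_p := df_p(\nu) \neq 0$, where $\nu$ is the inward normal, since $f$ has no critical points in $M$. In local Morse coordinates near $p$ with $M = \{z \geq 0\}$, this lets me write $f = f(p) + \alpha_p z + x^2 - y^2$. I would then form the double $DM = M_+ \cup_{\partial M} M_-$ with $M_+ = M$, and construct an extension $\widetilde{Df}: DM \to S^1$ as follows: on $M_+$, take $\widetilde{Df} = f$ unchanged; on $M_-$, extend $f|_{\partial M}$ smoothly with normal derivative at the seam matching that from $M_+$. Concretely, in doubled local coordinates with $M_- = \{z \leq 0\}$, I would set $\widetilde{Df} = f(p) + h(z) + x^2 - y^2$ near $p$, where $h: (-\infty, 0] \to \RR$ is smooth with $h(0) = 0$, $h'(0) = \alpha_p$, and $h(z) = -\alpha_p z$ for $z$ sufficiently negative. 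This yields a smooth Morse function on $DM$ equal to $f$ on $M_+$, whose critical points all lie in $M_-$: exactly one $q_p \in M_-$ per boundary critical point $p$, with Morse index $1$ when $\alpha_p < 0$ and $2$ when $\alpha_p > 0$ (by a direct Hessian calculation using that $h'$ changes sign once in the collar).

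Next, since $\chi(DM) = 0$ for the closed 3-manifold $DM$, the Morse identity $\sum (-1)^i c_i = 0$ forces $\#\{\alpha_p < 0\} = \#\{\alpha_p > 0\}$. I would pair the critical points bijectively by adjacent Morse indices and apply the classical Morse cancellation theorem to each pair, after a small perturbation of the auxiliary metric on $M_-$ to ensure a unique connecting gradient flow line; each such cancellation modifies $\widetilde{Df}$ only in a tubular neighborhood of that flow line inside $M_-$. After all pairs are canceled, the resulting map $\pi: DM \to S^1$ is a smooth submersion, hence by Ehresmann's theorem a locally trivial fiber bundle --- a fibered 3-manifold. Setting $Y := DM$ with fibration $\pi$ and letting $i: M \hookrightarrow Y$ be the natural inclusion as $M_+$, every modification is supported in $M_-$, so $\pi \circ i = \widetilde{Df}|_{M_+} = f$ as required.

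The hard part will be the final cancellation step: for each pair $(q_p, q_{p'})$ of critical points in $M_-$ of adjacent indices, I need a unique gradient flow line connecting them, entirely contained in $M_-$. Connecting flow lines exist generically, and uniqueness is achievable by a small perturbation of the auxiliary Riemannian metric supported in $M_-$. Constructing a compatible pairing of the critical points --- so that each pair admits such a connecting flow line and so that the cancellations can be performed in any order without interference --- is the most delicate technical point, and would be handled by standard handle-trading and reordering techniques for Morse functions on the connected 3-manifold $M_-$.
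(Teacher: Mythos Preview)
Your approach has a genuine obstruction, not merely a technical gap. You commit to $Y = DM$, the double of $M$, and then claim that after cancelling the index-$1$/index-$2$ pairs the map $\pi: DM \to S^1$ becomes a submersion, hence a fibration. But whether a closed $3$-manifold fibers over $S^1$ in a given cohomology class is a topological invariant; Morse cancellation cannot make a non-fibered class fibered. And in the case of primary interest in this paper---$M = H_g$ a genus-$g$ handlebody with $g \geq 2$---the double $DM \cong \#_g\, S^1 \times S^2$ does \emph{not} fiber over $S^1$ in any nontrivial class. Indeed, $\pi_1(\#_g\, S^1 \times S^2) \cong F_g$, and the kernel of any surjection $F_g \twoheadrightarrow \ZZ$ is free of infinite rank when $g \geq 2$, hence cannot be a closed surface group. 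So your cancellation step is impossible in exactly the situation the proposition is meant to address, and the hand-waved ``standard handle-trading and reordering techniques'' cannot repair this.

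The paper avoids this by not fixing $Y$ in advance. It cuts $M$ along a regular fiber $F_0 = f^{-1}(0)$, tracks how the level sets $f^{-1}(\theta)$ change (each boundary saddle adds or removes a $1$-handle), and uses the stable/unstable arcs of $f|_{\partial M}$ to slide all the added $1$-handles down to $F_0$ and all the removed $1$-handles up to $F_{2\pi}$. This realizes the cut-open $M$ fiberwise inside $\widehat{F}_0 \times [0,2\pi]$ for an enlarged surface $\widehat{F}_0$, and then one closes up by embedding both $\widehat{F}_0$ and $\widehat{F}_{2\pi}$ in a common closed surface $F$ and taking the mapping torus of any diffeomorphism $\phi: F \to F$ carrying $F_{2\pi}$ back to $F_0$. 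The resulting $Y$ has no reason to be the double of $M$; its genus is typically larger than what any fixed double would allow.
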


\begin{proof}
Without loss of generality, assume that $0$ is a regular value of $f|_{\partial M}$.  Let $F_0 = f^{-1}(0)$ be the compact, regular level set.  As $\theta$ varies in $[0,2\pi]$, the topology of the regular level set $f^{-1}(\theta)$ changes at the critical points of $f|_{\partial M}$ by either adding or removing a 1-handle.  Fix a Morse-Smale metric $g$ on $\Sigma$.  If $p \in \text{Crit}(f|_{\partial M})$ corresponds to adding a 1-handle to the regular level set, then we can use the unstable manifold of $p$ to isotope the attaching $S^0$ for this down to $\partial F_0$.  On the other hand, if $p$ is a critical point corresponding to removing a 1-handle, then we can use the stable manifold of $p$ to isotope the attaching $S^0$ for the cancelling 1-handle up to $F_{ 2\pi}$.

We therefore have two surfaces
\[\widehat{F}_0 = F_0 \cup \{\text{1-handles}\}\qquad \widehat{F}_{2\pi} = F_{2 \pi} \cup \{\text{1-handles}\}\]
Let $M^0$ be the compact 3-manifold obtained by cutting $M$ along $f^{-1}(0)$.  Then $M^0$ embeds into $\widehat{F}_0 \times [0,2\pi]$ in a fiber-preserving manner.  Similarly, it also embeds into $\widehat{F}_{2\pi} \times [0,2\pi]$.

Now choose a closed surface $F$ and embeddings $\widehat{F}_0, \widehat{F}_{2\pi} \hookrightarrow F$.  To obtain $Y$, we choose any homeomorphism $\phi: F \rightarrow F$ such that $\phi(F_{2 \pi}) = F_{0}$ and take the mapping torus
\[ Y = F \times [0,2\pi]/ (x,2\pi) \sim (\phi(x),0)\]
The embedding $M_0 \subset \widehat{F}_0 \times [0,2 \pi] \subset F \times [0,2\pi]$ then descends to an embedding of $M$ into $Y$.
\end{proof}

\begin{theorem}
Suppose that $\betat_{\lambda}$ is integral.  Then each nonsingular leaf $L$ minimizes the Thurston norm for the class $[L] \in H_2(H_{\lambda},\partial L)$.
\end{theorem}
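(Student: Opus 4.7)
The plan is to reduce to Thurston's theorem for fibered $3$-manifolds by embedding $H_{\lambda}$ into such a manifold via Proposition \ref{prop:foliated-handlebody-embedding}. Since $\betat_{\lambda}$ is closed, integral, and nonvanishing on the interior of $H_{\lambda}$, it defines a smooth map $\phi: H_{\lambda} \rightarrow S^1$ with no interior critical points. Along $\Sigma$, the restriction $\phi|_{\Sigma}$ is a primitive of $\beta_{\lambda}$, so its critical points are exactly the nondegenerate zeros of the harmonic $1$-form $\beta_{\lambda}$. These zeros are Morse saddles of index $1$, because harmonic functions on a surface cannot have local extrema. The hypotheses of Proposition \ref{prop:foliated-handlebody-embedding} are therefore met, producing a fibration $\pi: Y \rightarrow S^1$ with fiber $F$ and an embedding $i: H_{\lambda} \hookrightarrow Y$ with $\pi \circ i = \phi$. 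We arrange $F$ to have positive genus, so that $\chi_{-}(F) = -\chi(F)$.

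For a regular value $\theta$, the nonsingular leaf $L = \phi^{-1}(\theta)$ is identified with $\pi^{-1}(\theta) \cap i(H_{\lambda})$, and $F' := \pi^{-1}(\theta) \setminus \text{int}(L)$ is a complementary properly embedded subsurface of $Y \setminus \text{int}(i(H_{\lambda}))$ with $\partial F' = \partial L$. Given any competing properly embedded surface $S \subset H_{\lambda}$ with $\partial S = \partial L$ and $[S] = [L] \in H_2(H_{\lambda}, \partial L)$, form the closed surface $\widehat{S} = S \cup_{\partial L} F' \subset Y$. Because $H_2(H_{\lambda}) = 0$, the cycle $S - L$ bounds a $3$-chain $W$ inside $H_{\lambda}$, whence $\widehat{S}$ and $\pi^{-1}(\theta)$ cobound $W$ in $Y$ and $[\widehat{S}] = [F] \in H_2(Y;\ZZ)$.

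Applying Thurston's theorem that fibers minimize the norm in $Y$ yields $\chi_{-}(\widehat{S}) \geq -\chi(F)$, while additivity of Euler characteristic under gluing along circles gives $\chi(\widehat{S}) - \chi(F) = \chi(S) - \chi(L)$. To convert this into $\chi_{-}(S) \geq \chi_{-}(L)$, one checks that $L$ has no sphere or disk components, so that $\chi_{-}(L) = -\chi(L)$, and that $S$ can be replaced by an incompressible representative with $\chi_{-}(S) = -\chi(S)$. Sphere components of $L$ are ruled out by Lemma \ref{lemma:compact-intersects-boundary}, and a disk component of $L$ would have boundary a closed curve in $\Sigma \setminus B$ along which $\beta_{\lambda}$ would be exact, contradicting the closed transversal to the kernel foliation of $\beta_{\lambda}$ through any such point provided by Lemma \ref{lemma:Calabi}.

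The main obstacle is this last Euler-characteristic bookkeeping step: verifying that handle compressions and boundary compressions of $S$ inside $H_{\lambda}$ do not increase $\chi_{-}$, and identifying the resulting inequality with minimization in Scharlemann's generalized Thurston norm \cite{Scharlemann} on $H_2(H_{\lambda}, \partial L)$. This is precisely where Scharlemann's generalization, rather than the classical Thurston norm, is indispensable, since the boundary curves $\partial L$ occupy only a part of $\Sigma \subset \partial H_{\lambda}$ and the norm must be defined relative to this partial boundary.
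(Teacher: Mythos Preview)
Your approach is essentially identical to the paper's: integrate $\betat_{\lambda}$ to a circle-valued map, invoke Proposition~\ref{prop:foliated-handlebody-embedding} to embed $H_{\lambda}$ leafwise into a fibered $3$-manifold $Y$, cap a competitor $S$ with the complementary piece $F'$ of the fiber, and apply Thurston's norm-minimization for fibers. The paper's proof is in fact terser than yours---it jumps directly from the fiber inequality to $-\chi(L)\le -\chi(L')$ without the checks you perform (that $[\widehat{S}]=[F]$, that $L$ has no disk or sphere components, and the $\chi_-$ versus $-\chi$ distinction). So the ``main obstacle'' you flag is not something the paper resolves any more carefully; your version is, if anything, the more scrupulous one.
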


\begin{proof}
Since $\betat_{\lambda}$ is integral, there exists a map $f: H_{\lambda} \rightarrow S^1$ such that $f^*(d \theta) = \betat_{\lambda}$.  By Proposition \ref{prop:foliated-handlebody-embedding}, we obtain an embedding $H_{\lambda} \hookrightarrow Y$ into a fibered 3-manifold that extends $f$.

Let $L = f^{-1}(\theta_0)$ be a nonsingular, compact leaf.  The $L$ embeds in the fiber $ \pi^{-1}(\theta_0)$.  Let $F_{\theta_0}$ denote the complement of $L$ in the fiber.  Let $L'$ be any embedded surface representing $[L] \in H_2(H_{\lambda},\partial L)$.  Then $L' \cup F_{\theta_0}$ is a closed surface representing the class of the fiber in $H_2(Y)$.  Since $\pi^{-1}(\theta_0)$ minimizes the Thurston norm in $Y$, we consequently have the inequality
\[-\chi(L \cup F_{\theta_0}) \leq -\chi(L' \cup F_{\theta_0})\]
which implies that $-\chi(L) \leq - \chi(L')$.  
\end{proof}

\subsection{Extending $B$ to a 1-complex $\cS$}

The construction of $\omega$ in the next subsection requires a small perturbation near the vanishing locus $B \subset \Sigma$. To extend this perturbation across the spine, it is necessary to extend $B$ to a 1-complex $\cS$ that is transverse to the foliations $\{\cF_{\lambda}\}$.  We can accomplish this via the following lemma.

\begin{lemma}
\label{lemma:transverse-paths}
For each $\lambda$, there exists a collection of disjoint, embedded paths $\gamma_{\lambda,1},\dots,\gamma_{\lambda,g-1}$ in $H_{\lambda}$ with boundary on $B$ that are transverse to the foliation $\cF_{\lambda}$.
\end{lemma}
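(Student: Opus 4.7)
My plan is to partition $B = B_+ \sqcup B_-$ by the sign of $f$; the condition $\sum_i f(b_i) = 0$ together with $|B| = 2g - 2$ forces $|B_+| = |B_-| = g - 1$, so there is a perfect matching into $g - 1$ pairs $(b, b') \in B_+ \times B_-$, and I will construct one embedded positive transversal from $b$ to $b'$ per pair, disjoint from the others.  The local step near $B$ is immediate: on the collar at $b \in B_+$ (where $f \equiv 1$), the formula $\betat_\lambda = \beta_\lambda + ds$ together with $\beta_\lambda(b) = 0$ and $df|_b = 0$ shows that $\betat_\lambda = ds$ along the entire normal ray $\{b\} \times [0, \epsilon)$, so this ray is a short positive transversal emanating from $b$ into $\mathrm{int}(H_\lambda)$.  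Symmetrically, the reversed normal ray at $b' \in B_-$ is a positive transversal terminating at $b'$.

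The global step joins the interior endpoints $p$ (near $b$) and $q$ (near $b'$) by an embedded positive transversal in $\mathrm{int}(H_\lambda)$.  Proposition \ref{prop:taut-annulus} supplies closed transversals through $p$ and $q$ together with foliated annuli $A_p, A_q$ onto closed transversals $c_p, c_q \subset \Sigma$ of the kernel foliation of $\beta_\lambda$.  Since $\beta_\lambda$ is intrinsically harmonic, an appeal to Calabi's theorem (Lemma \ref{lemma:Calabi}) combined with the Morse-form decomposition $\Sigma = \Sigma_c \cup \Sigma_{\infty}$ invoked earlier in the section produces an embedded transverse arc $\delta \subset \Sigma$ joining $c_p$ to $c_q$; pushing $\delta$ slightly into the collar moves its interior off $\Sigma$ while preserving transversality to $\cF_\lambda$.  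Concatenating the arc in $A_p$ from $p$ to $c_p$, the pushed-in copy of $\delta$, and the reverse arc in $A_q$ from $c_q$ to $q$, then smoothing the corners within flow boxes, yields an embedded positive transversal from $p$ to $q$; prepending and appending the two local rays at $b$ and $b'$ then produces the required arc $\gamma_{\lambda,i}$.

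For disjointness I proceed inductively: the complement of any compact 1-complex in a handlebody remains connected, and all of the closed transversals, foliated annuli, and surface arcs used above are in general position, so they can be perturbed within the three-dimensional complement to avoid a thin tubular neighborhood of $\gamma_{\lambda,1} \cup \dots \cup \gamma_{\lambda,k}$.  The same construction then produces $\gamma_{\lambda,k+1}$ disjoint from the previous arcs.

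The main technical obstacle I anticipate is the global step, specifically joining two closed transversals on $\Sigma$ by an embedded transverse arc of the Morse 1-form $\beta_\lambda$ and then assembling a properly embedded transverse arc in $H_\lambda$ from the three pieces.  This reduces to a 2-dimensional transverse-arc connectivity statement for intrinsically harmonic Morse 1-forms, which I expect to follow from Calabi's theorem and the Morse-form foliation machinery used earlier, but verifying that the concatenation can always be smoothed to remain embedded and positively transverse is the most delicate point.
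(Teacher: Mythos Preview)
Your approach is correct but substantially more roundabout than the paper's. Both begin by pairing $B_+$ with $B_-$ using the sign of $f$. The paper then invokes Calabi's theorem directly on $\Sigma$: since $\beta_\lambda$ is harmonic, for each pair $(p_i,q_i)$ there is an embedded path in $\Sigma$ transverse to $\ker(\beta_\lambda)$ connecting them, and these paths are then simply pushed into $H_\lambda$ while remaining transverse to $\cF_\lambda$. That is the entire argument.

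Your construction instead travels from $b$ into the interior along the normal ray, back toward $\Sigma$ via the annulus of Proposition~\ref{prop:taut-annulus}, across $\Sigma$ by a Calabi arc $\delta$, back in through a second annulus, and out along the normal ray at $b'$ --- a five-piece concatenation whose smoothing you rightly flag as the most delicate step. But notice that your ``main technical obstacle'' (transverse-arc connectivity on $\Sigma$ for an intrinsically harmonic Morse form) is exactly the single ingredient the paper uses, just applied to the singular points themselves rather than to intermediate closed transversals. What your argument gains is an explicit verification of transversality at the endpoints in $B$ via the normal rays, which the paper absorbs into the one-line ``push into $H_\lambda$''; the invocation of Proposition~\ref{prop:taut-annulus}, the foliated annuli, and the concatenation are all unnecessary detours.
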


For brevity, let $\gamma_{\lambda}$ denote the union $\cup \gamma_{\lambda,i}$ of the $g-1$ arcs constructed in $H_{\lambda}$. Then the 1-complex is the union
\[ \cS := \bigcup_{\lambda = 1,2,3}\gamma_{\lambda}\]

\begin{proof}[Proof of Lemma \ref{lemma:transverse-paths}]
Choose a pairing on the points of $B$, where each pair has one positive and one negative point (with respect to the orientation on $B$ induced by the function $f$).  In particular, we can index the positive points as $p_1,\dots,p_{g-1}$ and the negative points as $q_1,\dots,q_{g-1}$.

By assumption, the 1-form $\beta_{\lambda}$ is harmonic on $\Sigma$.  Calabi has shown that, consequently, for any two points $p_i,q_i \in \Sigma$, there is an embedded path transverse to $\text{ker}(\beta_{\lambda})$ in $\Sigma$ from $q_i$ to $p_i$ \cite{Calabi}.  Let $\gamma_{\lambda,1},\dots,\gamma_{\lambda,g-1}$ be a collection of such transverse paths.  These paths can be pushed into $H_{\lambda}$ as a collection of embedded paths transverse to $\cF_{\lambda}$.
\end{proof}

By the Morse lemma, we can choose local coordinates $(x,y)$ near each $b \in B$ such that 
\[\beta = xdx - ydy \qquad \ast \beta = x dy + y dx\]
This further implies that
\[\beta \wedge \ast \beta = (x^2 + y^2) dx \wedge dy\]
We set $R = x^2 + y^2$ to be the square of the radius. Then near $B$ we have
\[\beta_1 \wedge \beta_2 = \beta_2 \wedge \beta_3 = \beta_3 \wedge \beta_1 = R dx \wedge dy\]

We now extend these local coordinates to coordinates on each cylinder $\nu(\gamma_{\lambda,i})$. The foliation $\cF_{\lambda}$ induces a foliation by disks on each $\nu(\gamma_{\lambda,i})$ and therefore determines a product splitting $\nu(\gamma_{\lambda,i}) = \gamma_{\lambda,i} \times D^2$.  We can view $(x,y)$ as local coordinates on the leaves of $\cF_{\lambda}$ that intersect $\gamma_{\lambda,i}$.   By construction, each arc $\gamma_{\lambda,i}$ of $\cS$ connects two points where $f$ takes opposite signs.  At the endpoints of $\gamma_{\lambda,i}$, the surface $\Sigma$ inherits two orientations: one as the boundary of $H_{\lambda}$ and the other as the boundary of $\nu(\gamma_{\lambda,i})$.  We can assume these agree at the endpoint where $f = 1$ and disagree at the endpoint where $f = -1$.  In particular, at the negative endpoint of $\gamma_{\lambda,i}$, the 2-form $dx \wedge dy$ induces the negative orientation on $\Sigma$.

In the discussion that follows, we will let $R = x^2 + y^2$ in this local coordinate system and let $D_R$ denote the disk of radius $\sqrt{R}$.  We also set constants
\[0 < R_0 < R_1 < R_2\]
where $\sqrt{R_2}$ is less than the radius of definition of the coordinates $(x,y)$ on $\nu(\cS)$.  We will let $\nu_R(B)$ and $\nu_R(\cS)$ denote the tubular neighborhoods of radius $\sqrt{R}$ around $B$ in $\Sigma$ and $\cS$ in $H_{\lambda}$, respectively.

\subsection{Handlebody primitive}  

The goal of this section is to construct an exact 2-form on $H_{\lambda}$ that orients the leaves of $\cF_{\lambda}$.  Since $\cF_{\lambda}$ is much like a taut foliation, we adapt a standard argument (eg \cite[Section 4.4]{Calegari}) for constructing 2-forms that orient taut foliations.  Throughout, we will consider a collar neighborhood $\Sigma \times [0,8\epsilon]$ of $\partial H_{\lambda}$, where $\epsilon$ is an undefined, small constant to be chosen later and $s$ is a normal coordinate to $\Sigma = \partial H_{\lambda}$.

\begin{lemma}
\label{lemma:q-paths}
There exists a collection of paths $\{\gamma_{q_1},\dots,\gamma_{q_k}\}$ in $\Sigma \setminus B$ and closed 1-forms $PD(\gamma_{q_1}),\dots,PD(\gamma_{q_k})$ such that
\begin{enumerate}
    \item $\beta_{\lambda} \wedge PD(\gamma_{q_i}) \geq 0$ for all $i = 1,\dots,k$,
    \item the union $\bigcup \text{Supp}(\beta_{\lambda} \wedge PD(\gamma_{q_i}))$ of the supports covers the complement of $\nu_{R_0}(B)$.
\end{enumerate}
\end{lemma}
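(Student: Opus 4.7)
My plan is a standard covering argument. The idea is to invoke Calabi's condition (Lemma \ref{lemma:Calabi}) to produce an embedded closed transversal through every nonsingular point of $\Sigma$, extract a finite subcover of the compact set $K := \Sigma \setminus \nu_{R_0}(B)$, and take their Poincar\'e duals supported in thin tubular neighborhoods, with the orientations fixed by the positive transversality.

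Concretely, for each $q \in K$ I use Lemma \ref{lemma:Calabi} to get an embedded closed curve $\gamma_q \subset \Sigma \setminus B$ through $q$ transverse to $\ker(\beta_\lambda)$, oriented so that $\beta_\lambda(\dot{\gamma}_q) > 0$. Openness of transversality then yields a tubular neighborhood $U_q \cong \gamma_q \times (-\delta_q, \delta_q)$ foliated by curves on which $\beta_\lambda$ remains positive in the circle direction. Compactness of $K$ extracts a finite subcover $U_{q_1},\dots,U_{q_k}$. In each $U_{q_i}$, let $u$ be the normal coordinate oriented so that $(\dot{\gamma}_{q_i}, \partial_u)$ is a positive frame for $\Sigma$, and choose a smooth nonnegative bump $\rho_i(u)$ strictly positive on a sub-interval $(-\delta_{q_i}', \delta_{q_i}')$, chosen small enough that the narrower neighborhoods $U_{q_i}' := \gamma_{q_i} \times (-\delta_{q_i}', \delta_{q_i}')$ still cover $K$, and compactly supported inside $U_{q_i}$. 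I then set
\[ PD(\gamma_{q_i}) := \rho_i(u)\, du, \]
extended by zero to all of $\Sigma$. This is a closed 1-form Poincar\'e dual to $[\gamma_{q_i}]$.

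To verify the two properties, I expand $\beta_\lambda = a(t,u)\, dt + b(t,u)\, du$ in these coordinates. Positive transversality gives $a(t,0) > 0$, and continuity (with a further shrinking of $\delta_{q_i}$ if necessary) yields $a > 0$ throughout $U_{q_i}$. Hence
\[ \beta_\lambda \wedge PD(\gamma_{q_i}) = a(t,u)\rho_i(u)\, dt \wedge du \geq 0, \]
which gives (1). The support of this 2-form contains the open subset where $a > 0$ and $\rho_i > 0$, which in turn contains $U_{q_i}'$; since $\{U_{q_i}'\}$ covers $K = \Sigma \setminus \nu_{R_0}(B)$, property (2) follows.

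I do not foresee any real obstacle. The only mild subtlety is the simultaneous coordination of the shrunken neighborhoods $U_{q_i}'$ with the bumps $\rho_i$ so that (1) and (2) hold together, but this is arranged cleanly by first extracting the finite subcover and only then fixing the bump supports. The construction is essentially the standard Poincar\'e-dual recipe for orienting a taut foliation by a nonnegative 2-form, specialized to the 2-dimensional setting of $\Sigma$ and with the singular locus $B$ excised via the neighborhood $\nu_{R_0}(B)$.
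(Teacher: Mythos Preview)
Your argument is correct and essentially identical to the paper's: invoke Calabi's transversality (Lemma~\ref{lemma:Calabi}) to find an embedded closed transversal through each nonsingular point, define $PD(\gamma_q)=\phi(s)\,ds$ in a normal tubular neighborhood, and extract a finite subcover by compactness of $\Sigma\setminus\nu_{R_0}(B)$. The only cosmetic difference is that the paper takes the finite subcover of the supports $\mathrm{Supp}(\beta_\lambda\wedge PD(\gamma_q))$ directly, whereas you first extract the cover by tubes and then fit the bump supports; both orderings work.
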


\begin{proof}
Let $q$ be a point in the complement of a $\sqrt{R}_0$-neighborhood of $B$.  Since $\beta_{\lambda}$ is harmonic, we can find a closed transversal through $q$.  A tubular neighborhood $\nu(\gamma_q)$ splits as a product $\gamma_q \times [-\delta,\delta]$.  Let $s$ be a normal coordinate and $\phi(s)$ a bump function supported on $[-\delta,\delta]$.  Define $PD(\gamma_q) = \phi(s)ds$ in these local coordinates.  Since $\gamma_q$ is a closed transversal, we have that $\beta_{\lambda} \wedge PD(\gamma_q) \geq 0$.

The set $\{\text{Supp}(\beta_{\lambda} \wedge PD(\gamma_q))\}$ as $q$ ranges through the points in $\Sigma \smallsetminus \nu_{\sqrt{R}_0} (B)$ is an open cover of a compact set, hence it has a finite subcover that is indexed by a finite set of points $q_1,\dots,q_k$. 
\end{proof}

Choose a cutoff function satisfying
\[\psi_q = \begin{cases} 1 & \text{if $s \geq 5 \epsilon$} \\ 0 & \text{if $s \leq 2\epsilon$}\end{cases} \]
such that $\psi'_q > 0$ for $s \in [3\epsilon,5\epsilon]$.  Define the 1-form
\[\mu_q = \psi_q \sum_{i=1}^k PD(\gamma_{q_i})\]
where $PD(\gamma_{q_i})$ also denotes (by abuse of notation) the pullback of $PD(\gamma_{q_i})$ via the projection map $\pi: \Sigma \times [0,8\epsilon] \rightarrow \Sigma$.

\begin{lemma}
For $\epsilon > 0$ sufficiently small, we have that $\betat_{\lambda} \wedge d\mu_q \geq 0$ and is strictly positive when $s \in [3\epsilon,5\epsilon]$ and $R \geq R_0$.
\end{lemma}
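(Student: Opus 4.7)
The plan is a direct computation followed by a smallness argument. Since each $PD(\gamma_{q_i})$ is closed on $\Sigma$ and only $\psi_q$ varies in the normal direction, the exterior derivative simplifies to
\[
d\mu_q = \psi'_q(s)\, ds \wedge \sum_{i=1}^k PD(\gamma_{q_i}),
\]
which is identically zero whenever $s \notin [3\epsilon, 5\epsilon]$; in that range $\betat_\lambda \wedge d\mu_q$ vanishes for free. Inside the band, expand $\betat_\lambda = \beta_\lambda + f\, ds + s\, df$. The $f\, ds$ contribution is killed by $ds \wedge ds = 0$, leaving
\[
\betat_\lambda \wedge d\mu_q = \psi'_q\, (\beta_\lambda + s\, df) \wedge ds \wedge \sum_{i=1}^k PD(\gamma_{q_i}).
\]

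The leading piece is controlled by Lemma \ref{lemma:q-paths}: with the product orientation on the collar $\Sigma \times [0,8\epsilon]$, the 2-form $\beta_\lambda \wedge \sum_i PD(\gamma_{q_i})$ is pointwise non-negative on each slice $\Sigma \times \{s\}$, and is strictly positive on the complement of $\nu_{R_0}(B)$. So the only thing to arrange is that the correction $s\, df \wedge ds \wedge \sum_i PD(\gamma_{q_i})$ does not overwhelm the leading term where the leading term could fail to be strictly positive, namely on $\nu_{R_0}(B)$.

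Here the second hypothesis of Theorem \ref{thrm:main} on $f$ rescues us. After shrinking $R_0$ if necessary, I may assume $\nu_{R_0}(B) \subset \bigcup_i U_i$, where $U_i$ is the given neighborhood of $b_i$ on which $f$ is locally constant $\pm 1$. On $\bigcup_i U_i$ we have $df \equiv 0$, so the correction vanishes pointwise and non-negativity of $\betat_\lambda \wedge d\mu_q$ on $\nu_{R_0}(B)$ is immediate. On the complementary region $\Sigma \setminus \bigcup_i U_i$, the smooth functions $|df|$ and the smooth 1-forms $PD(\gamma_{q_i})$ are uniformly bounded on a compact set, so there is a constant $C$ independent of $\epsilon$ with pointwise bound
\[
\Bigl| s\, df \wedge ds \wedge \sum_i PD(\gamma_{q_i}) \Bigr| \leq Cs,
\]
while the leading piece is bounded below by some $c > 0$ on this same compact region (compactness plus strict positivity). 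Choosing $\epsilon < c/(5C)$ makes the leading piece dominate the correction throughout $s \in [3\epsilon, 5\epsilon]$, giving strict positivity wherever $R \geq R_0$ and non-negativity everywhere.

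The only potentially delicate point is ensuring that the inequality of Lemma \ref{lemma:q-paths}, phrased as a non-negativity statement for a 2-form on $\Sigma$, lifts with the correct sign to the wedge $\beta_\lambda \wedge ds \wedge \sum_i PD(\gamma_{q_i})$ on the 3-dimensional collar. This is a convention choice for which direction of the normal coordinate $s$ points into $H_\lambda$ and is bookkeeping rather than substantive content.
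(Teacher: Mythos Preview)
Your proof is correct and follows essentially the same approach as the paper: compute $d\mu_q = \psi'_q\,ds \wedge \sum_i PD(\gamma_{q_i})$, expand $\betat_\lambda$, and dominate the $s\,df$ correction by the leading $\beta_\lambda$-term using smallness of $s$ (with your explicit observation that $df \equiv 0$ near $B$ being a slight sharpening of the paper's version). One minor imprecision: the support of $\psi'_q$ is a priori $[2\epsilon,5\epsilon]$ rather than $[3\epsilon,5\epsilon]$, since $\psi_q$ transitions from $0$ at $s=2\epsilon$; but your smallness argument applies verbatim on the larger interval, so this does not affect the conclusion.
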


\begin{proof}
We have that
\begin{align*}
    \betat_{\lambda} \wedge d\mu_q &= \sum_{i = 1}^k (\beta_{\lambda} + f ds + s df) \wedge \psi'_q ds \wedge PD(\gamma_{q_i}) \\
    &= -\psi'_q \sum \left( ds \wedge \beta_{\lambda} \wedge PD(\gamma_{q_i}) + s ds \wedge df \wedge PD(\gamma_{q_i}) \right)
\end{align*}
The derivative $-\psi'_q$ is positive and the first term in the sum is nonnegative everywhere and strictly positive on the support of $PD(\gamma_{q_i})$.  Moreover, it dominates the second term in the sum for $|s| \leq 5 \epsilon$ sufficiently small.
\end{proof}

We can choose an annulus $A_{q_i}$ with boundary $\gamma_{q_i} \times \{ 4\epsilon\} \cup \gamma'_{q_i} \times \{0\}$ such that
\begin{enumerate}
    \item $\gamma'_{q_i}$ is transversely isotopic to $\gamma_{q_i}$ in $\Sigma \setminus B$ and disjoint from an $\sqrt{R}_2$-neighborhood of $B$,
    \item the intersection $A_{q_i} \cap \Sigma \times [0,8 \epsilon]$ is the union of $\gamma_{q_i} \times [4\epsilon,8\epsilon]$ and $\gamma'_{q_i} \times [0,8\epsilon]$.
\end{enumerate}
We can then use the following lemma to extend $\mu_{q_i}$ to a 1-form supported in a neighborhood of $A_{q_i}$.

\begin{lemma}
\label{lemma:mu-p-construction}
Let $p \in H_{\lambda}$ be a point and let $\gamma_p,A_p$ be a closed transversal and annulus satisfying the conclusions of Proposition \ref{prop:taut-annulus}.  There exist tubular neighborhoods $U_p \subset U'_p$ of $\gamma_p$ and $V_p$ of $A_p$ and an exact 2-form $d \mu_p$ such that
\begin{enumerate}
    \item the support of $\mu_p$ is contained in $\nu(A_p)$,
    \item the form $\betat_{\lambda} \wedge d \mu_p$ is positive on $U_p$ and its positive support is contained in $U'_p$,
    \item the negative support of $\betat_{\lambda} \wedge d \mu_p$ is contained in $\nu(A_p) \cap \Sigma \times [\epsilon,2\epsilon]$
\end{enumerate}
\end{lemma}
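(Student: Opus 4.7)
The plan is to construct $d\mu_p$ as the difference $\omega_+ - \omega_-$ of two Thom-form-type 2-forms concentrated near disjoint closed transversals to $\cF_\lambda$: $\omega_+$ near $\gamma_p$ (producing the required positivity on $U_p$), and $\omega_-$ near a pushed-in copy $\gamma_\star = \gamma \times \{s_0\}$ of the boundary curve $\gamma$ at some level $s_0 \in [\epsilon, 2\epsilon]$ (producing the cancelling negative contribution confined to the collar slab). Because $\gamma_p$ and $\gamma_\star$ cobound a subannulus of $A_p$, their Thom forms represent the same class in compactly-supported cohomology of $\nu(A_p)$, so the difference admits a compactly-supported primitive $\mu_p$.

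First I would set up coordinates. Choose the tubular neighborhood $V_p \cong A_p \times (-\delta,\delta)_t$ so that $\partial_t$ is tangent to the leaves of $\cF_\lambda$; this is possible since $A_p$ is transverse to $\cF_\lambda$ and the foliation admits flow-box charts along $A_p$. In these coordinates $\betat_\lambda$ pulls back from the closed nowhere-vanishing 1-form $\alpha = \betat_\lambda|_{A_p}$, which in the parametrization $A_p = S^1_\theta \times [0,1]_\sigma$ reads $\alpha = A\, d\theta + B\, d\sigma$ with $A>0$ by transversality of the closed transversals $\gamma_\sigma = S^1 \times \{\sigma\}$. Using Proposition \ref{prop:taut-annulus} and the product structure of the collar, isotope $A_p$ so that $A_p \cap (\Sigma \times [0, 8\epsilon]) = \gamma \times [0, 8\epsilon]$ with $\sigma = s$ on this piece; in particular $\gamma_\star$ is the closed transversal at $\sigma = s_0$.

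Next, choose $\omega_+$ to be a unit-flux Thom form of $\gamma_p$ supported in a small tubular neighborhood $U'_p$ of $\gamma_p$ sitting inside $\nu(A_p)$ (interpreted as a tubular neighborhood enlarged at $\partial A_p$ to contain full tubular neighborhoods of the boundary curves). Since $\gamma_p$ is a positive transversal to $\cF_\lambda$, $\betat_\lambda \wedge \omega_+ > 0$ on a slightly smaller neighborhood $U_p \subset U'_p$. Similarly choose a unit-flux Thom form $\omega_-$ of $\gamma_\star$ supported in a tubular neighborhood of $\gamma_\star$ inside $\nu(A_p) \cap (\Sigma \times [\epsilon, 2\epsilon])$; again $\betat_\lambda \wedge \omega_- > 0$ on its support because $\gamma$ is a closed transversal on $\Sigma$ by Lemma \ref{lemma:Calabi} and the collar perturbation $d(sf)$ does not destroy transversality for small $s$. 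The form $\omega_+ - \omega_-$ is then closed and compactly supported in $\nu(A_p)$, and represents the zero class in $H^2_c(\nu(A_p)) \cong H_1(\nu(A_p))$ because $[\gamma_p] = [\gamma_\star]$ via the subannulus $A_p^0 = \{\sigma \in [s_0, 1]\} \subset A_p$. Hence $\omega_+ - \omega_- = d\mu_p$ for some $\mu_p$ compactly supported in $\nu(A_p)$, constructed explicitly by fiber integration of the homotopy operator along the family $\{\gamma_\sigma\}_{\sigma \in [s_0, 1]}$. Conditions (1)--(3) are then immediate: $\mu_p$ is supported in $\nu(A_p)$ by construction; $d\mu_p = \omega_+$ on $U'_p$ (where $\omega_-$ vanishes), giving $\betat_\lambda \wedge d\mu_p > 0$ on $U_p$; and the only negative contribution, $-\betat_\lambda \wedge \omega_-$, is confined to the support of $\omega_-$ inside $\nu(A_p) \cap \Sigma \times [\epsilon, 2\epsilon]$.

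The main obstacle I anticipate is the explicit construction of the primitive $\mu_p$ with compact support in $\nu(A_p)$ and its smooth matching at the two endpoint tubular neighborhoods of $\gamma_p$ and $\gamma_\star$. Locally at each endpoint, any primitive of the Thom form takes the form $F(r)\, d\psi$ in polar coordinates transverse to the curve, which becomes singular at $r = 0$ unless regularized by writing it as $(F(r)/r^2)(x\,dy - y\,dx)$ with $F(r) = O(r^2)$. Globally along $A_p^0$, one must verify that the fiber-integration primitive is actually supported inside the prescribed tubular neighborhood; this hinges on the flux-matching condition $\int \omega_+ = \int \omega_-$ together with the cobordism $A_p^0$ arranged in Step 1, which ensures the class of $\omega_+ - \omega_-$ vanishes not only in ordinary cohomology but in compactly-supported cohomology of $\nu(A_p)$.
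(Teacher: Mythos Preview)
Your approach is correct and ultimately produces the same object as the paper, but the paper arrives there by a much shorter route. The paper first normalizes the foliation: since $\betat_\lambda$ is closed and nonvanishing on $\nu(A_p)$ and every $S^1$-fiber is a closed transversal, one can choose coordinates $(s,t,\theta)$ on $\nu(A_p)\cong[0,1+2\delta]\times[-\delta,\delta]\times S^1$ with $\betat_\lambda=d\theta$ and $s$ agreeing with the collar coordinate near the boundary. In these coordinates the primitive is written down directly as
\[
\mu_p=\phi_2(s)\,\phi_1(t)\,dt,
\]
where $\phi_1$ is a bump in $t$ and $\phi_2$ is a cutoff that rises from $0$ to $1$ over $[\epsilon,2\epsilon]$ and falls back to $0$ across $[1-\delta,1+\delta]$. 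Then $d\mu_p=\phi_1\phi_2'\,ds\wedge dt$ and $\betat_\lambda\wedge d\mu_p=\phi_1\phi_2'\,d\theta\wedge ds\wedge dt$, so the sign is governed entirely by $\phi_2'$: positive near $\gamma_p$, negative only in the collar slab $s\in[\epsilon,2\epsilon]$.

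Your Thom-form construction is exactly this in disguise: the two regions where $\phi_2'$ is nonzero are your $\omega_+$ and $-\omega_-$, and $\phi_2\phi_1\,dt$ is precisely the fiber-integration primitive you allude to. The normalization $\betat_\lambda=d\theta$ is the step that dissolves the obstacles you flag at the end---there is no need to worry about matching primitives at the endpoints or about regularity of $F(r)\,d\psi$, because the explicit formula is manifestly smooth and compactly supported. What your more conceptual phrasing buys is a clear explanation of \emph{why} such a $\mu_p$ should exist (cobordism of transversals forces equality of Thom classes in $H^2_c$), whereas the paper's version makes the later positivity estimates in Proposition~\ref{prop:main-extension} immediate.
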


\begin{proof}
We can choose coordinates $(s,t,\theta)$ on $\nu(A_p) \cong [0,1+2\delta] \times [-\delta,\delta] \times S^1$ so that
\begin{enumerate}
    \item $s$ agrees with the collar coordinate on $\partial H_{\lambda}$ for $0 \leq s \leq 6 \epsilon$,
    \item $\betat_{\lambda} = d \theta$
\end{enumerate}
Let $\phi_1(t)$ be a bump function on $[-\delta,\delta]$ and choose a second bump function satisfying
\[\phi_2(s) = \begin{cases} 0 & \text{for $s \geq 1 + \delta$} \\ 1 &\text{for $ 1 - \delta \geq s \geq 2\epsilon$} \\ 0 & \text{for $s \leq \epsilon$}\end{cases}\]
and $\phi'_2 > 0$ on the interval $1 - \delta/2 \leq s \leq 1 + \delta/2$.
Define
\[\mu_p = \phi_2(s)\phi_1(t) dt\]
Then
\[d \mu_p = \phi_1 \left( \phi'_2 ds \wedge dt \right)\]
and
\[ \betat_{\lambda} \wedge d\mu_p = \phi_1 \phi'_2 \betat_{\lambda} \wedge ds \wedge dt\]
This is nonzero precisely on the support of $\phi_1 \phi'_2$.  The function $\phi_1$ is nonnegative everywhere, while $\phi'_2$ is positive on the interval $1 - \delta/2 \leq s \leq 1 + \delta/2$, the positive support of $\phi'_2$ is contained in the interval $1-\delta \leq s \leq 1 + \delta$ and the negative support is contained in the interval $ \epsilon \leq s \leq 2\epsilon$.  We can therefore choose $U_p = S^1 \times [1-\delta/2,1+\delta/2] \times [-\delta/2,-\delta/2]$ and $U'_p = S^1 \times [1-\delta,1+\delta] \times [-\delta,\delta]$.
\end{proof}

Define
\[W_{\epsilon} = H_{\lambda} \setminus (\Sigma \times [0,5 \epsilon])\]

\begin{proposition}
\label{lemma:p-paths}
There exists a collection of points $\{p_1,\dots,p_k\}$ in $W_{\epsilon}$ such that
\begin{enumerate}
    \item $\betat_{\lambda} \wedge d \mu_{p_i} \geq 0$ for all $i=1,\dots,k$,
    \item the union $\bigcup_{i = 1}^k \text{Supp}(\betat_{\lambda} \wedge d \mu_{p_i})$ of the supports covers $W_{\epsilon}$.
    \item the 3-form
    \[\betat_{\lambda} \wedge \sum_{i=1}^k d\mu_{p_i}\]
    is a volume form on $W_{\epsilon}$
\end{enumerate}
\end{proposition}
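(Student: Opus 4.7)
The plan is a pointwise-then-compactness argument, feeding the previous lemmas into each other. For each point $p \in W_{\epsilon}$, Proposition \ref{prop:taut-annulus} supplies a closed transversal $\gamma_p$ through $p$ together with an embedded annulus $A_p$ connecting $\gamma_p$ to an embedded curve in $\partial H_{\lambda} = \Sigma$. Feeding this data into Lemma \ref{lemma:mu-p-construction} yields a 1-form $\mu_p$ with an associated open neighborhood $U_p$ of $\gamma_p$ (in particular $p \in U_p$) on which $\betat_{\lambda} \wedge d\mu_p$ is strictly positive. The key bookkeeping observation is that the negative support of $\betat_{\lambda} \wedge d\mu_p$ is, by Lemma \ref{lemma:mu-p-construction}, contained in $\nu(A_p) \cap (\Sigma \times [\epsilon,2\epsilon])$, which is disjoint from $W_{\epsilon} = H_{\lambda} \setminus (\Sigma \times [0,5\epsilon])$. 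Hence $\betat_{\lambda} \wedge d\mu_p \geq 0$ everywhere on $W_{\epsilon}$, and is strictly positive on $U_p \cap W_{\epsilon}$.

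Next I would extract a finite subcover. The collection $\{U_p \cap W_{\epsilon} : p \in W_{\epsilon}\}$ is an open cover of $W_{\epsilon}$. Since $H_{\lambda}$ is compact and the collar $\Sigma \times [0,5\epsilon)$ is open, $W_{\epsilon}$ is a closed subset of the compact manifold $H_{\lambda}$, hence compact. Choose a finite subcover indexed by points $p_1,\dots,p_k \in W_{\epsilon}$.

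Finally I would verify the three conclusions. Condition (1) is immediate from the first paragraph, since each $\betat_{\lambda} \wedge d\mu_{p_i}$ is nonnegative on $W_{\epsilon}$. Condition (2) follows because $U_{p_i} \cap W_{\epsilon} \subset \text{Supp}(\betat_{\lambda} \wedge d\mu_{p_i})$ and these neighborhoods cover $W_{\epsilon}$ by construction. For condition (3), fix any $q \in W_{\epsilon}$. Some $U_{p_i}$ contains $q$, so $\betat_{\lambda} \wedge d\mu_{p_i}$ is strictly positive at $q$, while every other term is nonnegative at $q$ by condition (1). Summing,
\[ \betat_{\lambda} \wedge \sum_{i=1}^k d\mu_{p_i}\ >\ 0 \quad \text{at } q,\]
so the 3-form is a volume form on $W_{\epsilon}$.

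The only nontrivial step is the bookkeeping of supports in the first paragraph: the whole argument hinges on the fact that the collar coordinate intervals used to build $\mu_p$ in Lemma \ref{lemma:mu-p-construction} (namely $[\epsilon,2\epsilon]$ for the negative support versus $[0,5\epsilon]$ excluded from $W_{\epsilon}$) line up so that the negative contributions sit entirely in the discarded collar. Once this alignment is confirmed, the remainder is a standard finite-subcover-of-a-compact-set argument, with no further analytic input required.
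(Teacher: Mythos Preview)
Your proof is correct and follows the same approach as the paper: invoke Proposition~\ref{prop:taut-annulus} and Lemma~\ref{lemma:mu-p-construction} for each $p\in W_{\epsilon}$, then extract a finite subcover by compactness. In fact you spell out more carefully than the paper does the key bookkeeping point that the negative support from Lemma~\ref{lemma:mu-p-construction} lies in $\Sigma\times[\epsilon,2\epsilon]$ and hence misses $W_{\epsilon}$, which is exactly what makes conditions (1) and (3) go through.
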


\begin{proof}
For each $p \in W_{\epsilon}$, we can find some closed transversal $\gamma_p$ and annulus $A_p$ and use Lemma \ref{lemma:mu-p-construction} to define $\mu_p$.  As in the proof of Lemma \ref{lemma:q-paths}, the union of the supports of $d \mu_p$ as $p$ ranges over the set $W_{\epsilon}$ is an open cover, which by compactness has a finite subcover indexed by a finite collection of points $p_1,\dots,p_n$.  The required properties follow immediately by the construction of Lemma \ref{lemma:mu-p-construction}.
\end{proof}

We now define the 1-form
\begin{equation}
    \label{eq:mu-H-definition}
    \mu_{H_,\lambda} = \mu_q + \sum_{i = 1}^k \mu_{p_i}
\end{equation}

\begin{lemma}
For $C > 0$ sufficently large,
\begin{enumerate}
    \item $\alpha_+ = C \betat_{\lambda} + \mu_{H,\lambda}$ is a positive contact form on $W_{\epsilon}$,
    \item $\alpha_- = -C \betat_{\lambda} + \mu_{H,\lambda}$ is a negative contact form on $W_{\epsilon}$.
\end{enumerate}
\end{lemma}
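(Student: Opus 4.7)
The plan is a direct calculation combined with a compactness argument. Since $\betat_\lambda$ is closed, expanding gives
\[
\alpha_\pm \wedge d\alpha_\pm \;=\; \pm C\, \betat_\lambda \wedge d\mu_{H,\lambda} \;+\; \mu_{H,\lambda} \wedge d\mu_{H,\lambda},
\]
so the task reduces to showing that $\betat_\lambda \wedge d\mu_{H,\lambda}$ is a strictly positive volume form on the compact set $W_\epsilon$ and then choosing $C$ large enough to dominate the (bounded) second term.

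For pointwise strict positivity of $\betat_\lambda \wedge d\mu_{H,\lambda}$, I would split $\mu_{H,\lambda} = \mu_q + \sum_i \mu_{p_i}$ as in equation (\ref{eq:mu-H-definition}). The unnumbered lemma immediately following the definition of $\mu_q$ gives $\betat_\lambda \wedge d\mu_q \geq 0$ everywhere, so that piece contributes nonnegatively. The third conclusion of Proposition \ref{lemma:p-paths} is precisely that $\betat_\lambda \wedge \sum_i d\mu_{p_i}$ is a volume form on $W_\epsilon$. Adding the two contributions, $\betat_\lambda \wedge d\mu_{H,\lambda}$ is pointwise at least as large as this volume form, and so is itself a strictly positive volume form on $W_\epsilon$.

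The compactness step is then routine. Fix any auxiliary volume form $\Omega$ on $W_\epsilon$ and write $\betat_\lambda \wedge d\mu_{H,\lambda} = F\,\Omega$ and $\mu_{H,\lambda} \wedge d\mu_{H,\lambda} = G\,\Omega$ for smooth functions $F,G$ on $W_\epsilon$. Since $F$ is strictly positive and $W_\epsilon$ is compact, there exist constants $m > 0$ and $M \geq 0$ with $F \geq m$ and $|G| \leq M$ pointwise. Choosing any $C > M/m$ then yields $\alpha_+ \wedge d\alpha_+ \geq (Cm - M)\,\Omega > 0$ and simultaneously $\alpha_- \wedge d\alpha_- \leq (-Cm + M)\,\Omega < 0$, establishing both parts at once.

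The main obstacle is really just bookkeeping: verifying that $\betat_\lambda \wedge d\mu_{H,\lambda}$ is pointwise positive everywhere on $W_\epsilon$, including near $s = 5\epsilon$ where $\mu_q$ transitions and at points that may not lie in the positive support of any single $\mu_{p_i}$. Both concerns are already handled by the preceding setup, namely the pointwise nonnegativity of $\betat_\lambda \wedge d\mu_q$ and the finite-cover volume form conclusion of Proposition \ref{lemma:p-paths}, so no further geometric input is required beyond the closedness of $\betat_\lambda$.
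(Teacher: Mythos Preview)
Your proposal is correct and follows essentially the same approach as the paper: expand $\alpha_\pm \wedge d\alpha_\pm$ using $d\betat_\lambda = 0$, observe that $\betat_\lambda \wedge d\mu_{H,\lambda} > 0$ on $W_\epsilon$ by combining the nonnegativity of $\betat_\lambda \wedge d\mu_q$ with Proposition~\ref{lemma:p-paths}, and then take $C$ large so the first term dominates. Your write-up is in fact more explicit than the paper's, which simply asserts that ``the first term dominates for $C>0$ sufficiently large'' without spelling out the compactness step.
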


\begin{proof}
By construction,
\[\betat_{\lambda} \wedge d \mu_{H,\lambda} = \betat_{\lambda} \wedge d \mu_q + \betat_{\lambda} \wedge \sum d \mu_{p_i} > 0\]
Furthermore, we have that
\[\alpha_+ \wedge d \alpha_+ = C \betat_{\lambda} \wedge d\mu_{H,\lambda} + \mu_{H,\lambda} \wedge d \mu_{H,\lambda}\]
and
\[\alpha_- \wedge d \alpha_- = C( - \beta_{\lambda} \wedge d \mu_{H,\lambda}) + \mu_{H,\lambda} \wedge d \mu_{H,\lambda}\]
In both cases, the first term dominates for $C > 0$ sufficiently large.
\end{proof}

\section{Constructing the form}

In this section, we describe the symplectic form $\omega$ in a neighborhood of each stratum of the trisection decomposition and how to patch these forms together to get a global symplectic form.

\subsection{Coordinates on the spine}
\label{sub:coordinates}

Let $(p_1,p_2)$ denote coordinates on $D^2$ and let $(x,y)$ denote local coordinates on $\Sigma$.  We assume that locally $dx \wedge dy$ orients the surface $\Sigma$ and the orientation on $\nu(\Sigma) = D^2 \times \Sigma$ is given by the 4-form
\[dp_1 \wedge dx \wedge dp_2 \wedge dy = dp_2 \wedge dp_1 \wedge dx \wedge dy\]

In a neighborhood of the central surface, we can assume that locally in $\Sigma \times D^2$ the pieces of the trisection are given by:
\begin{align*}
    H_1 &= \{p_1 \geq 0, p_2 = 0\} &Z_1 &= \{ p_1 \geq 0, p_2 \leq 0\} \\
    H_2 &= \{p_1 + p_2 = 0, p_2 \geq 0\} &     Z_2 &= \{ p_2 \geq 0, p_1 + p_2 \geq 0\} \\
    H_3 &= \{p_1 = 0, p_2 \leq 0\} &    Z_3 &= \{ p_1 \leq 0, p_1 + p_1 \leq p_1\}
\end{align*}
In particular, if we set
\[\phi_1 = p_2 \qquad \phi_2 = -p_1-p_2 \qquad \phi_3 = p_1\]
then 
\[H_{\lambda} = \{ \phi_{\lambda-1} \geq 0, \phi_{\lambda} = 0\} \qquad Z_{\lambda} = \{\text{max}(-\phi_{\lambda-1},\phi_{\lambda}) \leq 0 \}\] 
and in particular, $Z_{\lambda}$ is locally the sublevel set of $g_{\lambda} = \text{max}(\phi_{\lambda},-\phi_{\lambda-1})$.

We attach $H_{\lambda} \times I$ to $\Sigma \times D^2$ as follows.  Fix some $\epsilon > 0$.  Let $s \in [0,6\epsilon]$ be a coordinate on a collar neighborhood of $\partial H_{\lambda}$ and $t$ a coordinate on the $I$-factor. Glue $H_{\lambda} \times I$ to $\Sigma \times D^2$ using the following identifications for $\lambda = 1,2,3$:
\[ s = \phi_{\lambda-1} \qquad t = \phi_{\lambda}\]

Finally, we obtain a smooth manifold $\nu(H_1 \cup H_2 \cup H_3)$ with three smooth boundary components as follows.  Recall that the sector $Z_{\lambda}$ is locally defined near $\Sigma$ by $g_{\lambda} = 0$.  We can extend $g_{\lambda}$ across $H_{\lambda} \times I \cup H_{\lambda-1} \times I$.  Now let $\widehat{g}_{\lambda}$ denote some $C^0$-close smoothing of $g_{\lambda}$ (as a function of $p_1,p_2$), that agrees with $g_{\lambda}$ outside of a tubular neighborhood of $\Sigma$.  Let $\widehat{Y}_{\lambda}$ be the level set of $\widehat{g}^{-1}_{\lambda}(-\epsilon)$ for some small $\epsilon > 0$.  We can assume that the smoothing is chosen so that the form
\[ \zeta_{\lambda} = d \phi_{\lambda} - d \phi_{\lambda-1} = dt - ds \]
coorients $\widehat{Y}_{\lambda}$ in $\Sigma \times D^2$.

\subsection{Constructing $\omega$ in a neighborhood of $\Sigma$}

The first step is to construct a 2-form in a neighborhood of the central surface.

Let $(\beta_1,\beta_2,\beta_3)$ be the 1-cocycle of 1-forms.  By assumption, they have extensions $\betat_1,\betat_2,\betat_3$ across the three handlebodies that take the form
\begin{align*}
    \betat_{\lambda} &= \beta_{\lambda} + d(sf)
\end{align*}
near $\Sigma$ for a common function $f$.  We can extend these to 1-forms on the 4-dimensional $\Sigma \times D^2$ as 
\[\betat_{\lambda} = \beta_{\lambda} + d( \phi_{\lambda-1}f)\]
Define the starting form
\[\omega_0 = dp_2 \wedge \betat_1 - dp_1 \wedge \betat_2\]
and the 1-forms
\[\mu_{\Sigma,\lambda} = (t + 2) \betat_{\lambda} - (s - 1) \betat_{\lambda+1}\]
for $\lambda = 1,2,3$.

\begin{lemma}
For each $\lambda = 1,2,3$, we have that
\begin{align*}
    \omega_0 &= dt \wedge \betat_{\lambda} - ds \wedge \betat_{\lambda+1} \\
    \mu_{\lambda+1} &= \mu_{\lambda} - 3 \betat_{\lambda}
\end{align*}
Consequently, each 1-form $\mu_{\Sigma,\lambda}$ is a primitive for $\omega_0$.
\end{lemma}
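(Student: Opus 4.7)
The plan is a direct computation resting on two sum-to-zero relations. First, by construction $\phi_1 + \phi_2 + \phi_3 = p_2 + (-p_1 - p_2) + p_1 = 0$, and the cocycle hypothesis gives $\beta_1 + \beta_2 + \beta_3 = 0$ on $\Sigma$ (and hence on $\Sigma \times D^2$ after pullback). Combining these,
\[\betat_1 + \betat_2 + \betat_3 = \sum_\lambda \beta_\lambda + d\!\left( f \sum_\lambda \phi_{\lambda-1} \right) = 0,\]
and each $\betat_\lambda$ is closed, since $\beta_\lambda$ is closed on $\Sigma$ and $d(\phi_{\lambda-1}f)$ is exact. These two facts, $\sum \phi_\lambda = 0$ and $\sum \betat_\lambda = 0$, are the only nontrivial inputs.

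For the first identity, I will check it case-by-case by substituting $s = \phi_{\lambda-1}$ and $t = \phi_\lambda$. The case $\lambda = 1$ is immediate since $s = p_1$, $t = p_2$. For $\lambda = 2$, one computes $dt \wedge \betat_2 - ds \wedge \betat_3 = -(dp_1 + dp_2)\wedge \betat_2 - dp_2 \wedge \betat_3$, and replacing $-\betat_2 - \betat_3$ by $\betat_1$ recovers $\omega_0$. The case $\lambda = 3$ is analogous, with $dp_1 \wedge (\betat_1 + \betat_2 + \betat_3) = 0$ being the key cancellation.

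For the second identity, I expand
\[\mu_{\Sigma,\lambda+1} - \mu_{\Sigma,\lambda} = (\phi_{\lambda+1}+2)\betat_{\lambda+1} - (\phi_\lambda - 1)\betat_{\lambda+2} - (\phi_\lambda+2)\betat_\lambda + (\phi_{\lambda-1}-1)\betat_{\lambda+1}.\]
Using $\phi_{\lambda+1} + \phi_{\lambda-1} = -\phi_\lambda$ collapses the $\betat_{\lambda+1}$ coefficient to $1 - \phi_\lambda$, and then using $\betat_{\lambda+1} + \betat_{\lambda+2} = -\betat_\lambda$ collects everything onto $\betat_\lambda$ with coefficient $(-\phi_\lambda - 2) + (\phi_\lambda - 1) = -3$.

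Finally, because each $\betat_\lambda$ is closed, differentiating the definition of $\mu_{\Sigma,\lambda}$ gives $d\mu_{\Sigma,\lambda} = dt \wedge \betat_\lambda - ds \wedge \betat_{\lambda+1}$, which by the first identity equals $\omega_0$. There is no real obstacle in this lemma; the only point to be careful about is that the shift from $\mu_\lambda$ to $\mu_{\lambda+1}$ simultaneously changes the coefficient forms \emph{and} the coordinates $s, t$, so both sum-to-zero relations must be deployed in tandem.
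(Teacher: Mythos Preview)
Your proof is correct and follows exactly the approach indicated in the paper, which simply states that the lemma follows by a straightforward computation using the cocycle relations $\phi_1+\phi_2+\phi_3=0$ and $\beta_1+\beta_2+\beta_3=0$. You have filled in those details accurately, including the derived relation $\betat_1+\betat_2+\betat_3=0$ and the closedness of each $\betat_\lambda$, both of which are used implicitly in the paper's later computations.
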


\begin{proof}
This follows by a straightforward computation using the cocycle relations
\begin{align*}
    \phi_1 + \phi_2 + \phi_3 &= 0 \\
    \beta_1 + \beta_2 + \beta_3 &= 0
\end{align*}
\end{proof}

\begin{proposition}
\label{prop:first-symplectic-sigma}
In a sufficiently small neighborhood of $\Sigma$ we have
\begin{enumerate}
    \item $\omega_0 \wedge \omega_0 \geq 0$, and 
    \item for each $\lambda = 1,2,3$, we have that
    \[\zeta_{\lambda} \wedge \mu_{\Sigma,\lambda} \wedge \omega_0 \geq 0\]
    \item for each $\lambda = 1,2,3$, we have that
    \begin{align*}
        dt \wedge \mu_{\Sigma,\lambda} \wedge \omega_0 &\geq 0 \\
        dt \wedge \mu_{\Sigma,\lambda+1} \wedge \omega_0 & \leq 0
    \end{align*}
\end{enumerate}
In addition, all 4-forms vanish exactly along $B \times D^2$, where $B = \beta^{-1}(0) = \beta_{\lambda}^{-1}(0)$, and
\end{proposition}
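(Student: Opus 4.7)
The plan is to reduce every 4-form in (1)--(3) to a scalar multiple of one master 4-form and then analyze that single expression. Writing $s = \phi_{\lambda-1}$ and $t = \phi_\lambda$, the preceding lemma puts $\omega_0$ in the adapted form $\omega_0 = dt \wedge \betat_\lambda - ds \wedge \betat_{\lambda+1}$. Since $\betat_\lambda = \beta_\lambda + f\,ds + s\,df$ has no $dt$-component and $\betat_{\lambda+1} = \beta_{\lambda+1} + f\,dt + t\,df$ has no $ds$-component, each of $(dt \wedge \betat_\lambda)^2$ and $(ds \wedge \betat_{\lambda+1})^2$ vanishes, and a direct expansion gives
\[\omega_0 \wedge \omega_0 = 2\, dt \wedge ds \wedge \betat_\lambda \wedge \betat_{\lambda+1}, \qquad \mu_{\Sigma,\lambda} \wedge \omega_0 = \bigl[(t+2)ds - (s-1)dt\bigr] \wedge \betat_\lambda \wedge \betat_{\lambda+1}.\]
Wedging the second identity with $\zeta_\lambda = dt - ds$ and with $dt$, respectively, and using the relation $\mu_{\Sigma,\lambda+1} = \mu_{\Sigma,\lambda} - 3\betat_\lambda$ together with $dt \wedge \betat_\lambda \wedge \omega_0 = dt \wedge ds \wedge \betat_\lambda \wedge \betat_{\lambda+1}$, I obtain
\[\zeta_\lambda \wedge \mu_{\Sigma,\lambda} \wedge \omega_0 = (t-s+3)\,\Omega, \quad dt \wedge \mu_{\Sigma,\lambda} \wedge \omega_0 = (t+2)\,\Omega, \quad dt \wedge \mu_{\Sigma,\lambda+1} \wedge \omega_0 = (t-1)\,\Omega,\]
where $\Omega := dt \wedge ds \wedge \betat_\lambda \wedge \betat_{\lambda+1}$. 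For $s,t$ near zero the coefficients $(t-s+3)$ and $(t+2)$ are strictly positive and $(t-1)$ is strictly negative, so all three sign claims and all four vanishing statements reduce to the single assertion that $\Omega \geq 0$ in a neighborhood of $\Sigma$ with zero set exactly $B \times D^2$.

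For this residual claim, the $ds$- and $dt$-components of $\betat_\lambda$ and $\betat_{\lambda+1}$ drop out upon wedging with $dt \wedge ds$, leaving
\[\Omega = dt \wedge ds \wedge \bigl[\beta_\lambda \wedge \beta_{\lambda+1} + t\,\beta_\lambda \wedge df + s\,df \wedge \beta_{\lambda+1}\bigr].\]
A short calculation with $\beta_1 = \beta$, $\beta_2 = -\ast\beta$, and $\beta_3 = -\beta_1 - \beta_2$ shows that $\beta_\lambda \wedge \beta_{\lambda+1} = -\beta \wedge \ast\beta$ for every $\lambda$, which is a definite-signed multiple of the Riemannian area form, vanishing exactly on $B$. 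Fixing the intrinsic orientation of $\Sigma$ so that this product is non-negative, the leading term is strictly positive off $B$.

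I would then handle the two regions separately. On any compact subset $K$ of $\Sigma \smallsetminus \nu_{R_0}(B)$, the leading term $dt \wedge ds \wedge \beta_\lambda \wedge \beta_{\lambda+1}$ is bounded away from zero, so shrinking the $D^2$-factor makes the $O(s,t)$ error terms negligible and $\Omega$ strictly positive on $K \times D^2$. Near the points of $B$, the hypothesis $f \equiv \pm 1$ on a neighborhood of each $b_i$ forces $df \equiv 0$ there, so $\Omega$ equals $dt \wedge ds \wedge \beta_\lambda \wedge \beta_{\lambda+1}$ exactly; in the Morse coordinates $(x,y)$ this becomes a non-negative multiple of $R\,dt \wedge ds \wedge dx \wedge dy$ with $R = x^2 + y^2$, vanishing precisely along $B \times D^2$. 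The main bookkeeping subtlety I expect is reconciling the Morse-chart expression for $\beta_\lambda \wedge \beta_{\lambda+1}$ with the global orientation of $\Sigma$, since the paper's convention has $dx \wedge dy$ inducing opposite orientations at positive and negative endpoints of the arcs in $\cS$; this is resolved chart-by-chart by tracking the sign of $dx \wedge dy$ relative to the chosen global orientation form.
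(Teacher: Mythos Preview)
Your proof is correct and follows essentially the same approach as the paper: both reduce every 4-form in the statement to a scalar multiple of the single master form $\Omega = dt \wedge ds \wedge \betat_\lambda \wedge \betat_{\lambda+1}$ (the paper writes this as $\tfrac{1}{2}\omega_0 \wedge \omega_0$), obtain the same coefficients $(t-s+3)$, $(t+2)$, $(t-1)$, and then analyze $\Omega$ by splitting into the region near $B$ (where $df = 0$) and its complement (where the leading term $\beta_\lambda \wedge \beta_{\lambda+1}$ dominates for small $s,t$). Your presentation is in fact somewhat more streamlined than the paper's, which carries out the same expansion but with a few stray factors of $2$ in the displayed identities.
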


\begin{proof}
First, by multiplying out, we have that
\begin{align*}
    \omega_0 \wedge \omega_0 &= -2 dp_2 \wedge \betat_1 \wedge d p_1 \wedge \betat_2 = 2 dp_2 \wedge dp_1 \wedge \betat_1 \wedge \betat_2  \\
    &= 2 dp_2 \wedge dp_1 \wedge (\beta_1 \wedge \beta_2 + d(f(\phi_1 + \phi_2)) \wedge (\beta_2 - \beta_1)) \\
    &= 2 dp_2 \wedge dp_1 \wedge \beta_1 \wedge \beta_2 + 2(\phi_1 + \phi_2) dp_2 \wedge dp_1 \wedge df \wedge (\beta_2 - \beta_1)
\end{align*}
We need to check positivity in two cases: inside and outside a neighborhood of radius $R_0$ of $B \subset \Sigma$.  Outside the neighborhood, then $\beta_1 \wedge \beta_2$ is strictly positive and bounded below, therefore the 4-form is positive provided that $|\phi_1 + \phi_2|$ is sufficiently small.  Inside this neighborhood, then $df = 0$ and the form is nonnegative and equal to 0 if and only if $\beta_1 = \beta_2 = 0$, which occurs precisely along $B \times D^2$.

We then compute that
\begin{align*}
    dt \wedge \mu_{\Sigma,\lambda} \wedge \omega_0 &= (t + 2) dt \wedge ds \wedge \betat_{\lambda} \wedge \betat_{\lambda+1} \\
    &= (t + 2) \omega_0 \wedge \omega_0 \\
    ds \wedge \mu_{\Sigma,\lambda} \wedge \omega_0 &= (s-1) dt \wedge ds \wedge \betat_{\lambda} \wedge \betat_{\lambda+1} \\
    &= (s - 1) \omega_0 \wedge \omega_0 \\
    \zeta_{\lambda} \wedge \mu_{\Sigma,\lambda} \wedge \omega_0 &= (t -s + 3) \omega_0 \wedge \omega_0 \\
    dt \wedge (\mu_{\Sigma,\lambda} - 3\betat_{\lambda}) \wedge \omega_0 &= (t - 1) dt \wedge ds \wedge \betat_{\lambda} \wedge \betat_{\lambda+1} \\
    &= (t - 1) \omega_0 \wedge \omega_0
\end{align*}
\end{proof}

Recall that a (positive) {\it confoliation} on a 3-manifold is a 2-plane field defined as the kernel of a 1-form $\alpha$ satisfying $\alpha \wedge d \alpha \geq 0$.  A negative confoliation is a 2-plane field satisfying the converse inequality everywhere.

\begin{corollary}
For $\lambda = 1,2,3$,
\begin{enumerate}
    \item The form $\mu_{\Sigma,\lambda}$ defines a (positive) confoliation on $\widehat{Y}_{\lambda}$,
    \item the form $\mu_{\Sigma,\lambda}$ defines a positive confoliation on $H_{\lambda}$ (with its coorientation given by $dt$),
    \item The form $\mu_{\Sigma,\lambda+1}$ defines a negative confoliation on $H_{\lambda}$ (with its coorientation given by $dt$)
\end{enumerate}
\end{corollary}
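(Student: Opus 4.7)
The plan is to unpack the definition of confoliation for each of the three claims and reduce it directly to an inequality already established in Proposition \ref{prop:first-symplectic-sigma}. The crucial preliminary observation, which is exactly the content of the previous lemma, is that each $\mu_{\Sigma,\lambda}$ is a primitive of $\omega_0$; this uses that $\betat_1, \betat_2, \betat_3$ are closed, so the exterior derivatives of the $(t+2)\betat_{\lambda}$ and $(s-1)\betat_{\lambda+1}$ terms reduce to $dt \wedge \betat_{\lambda} - ds \wedge \betat_{\lambda+1} = \omega_0$. Consequently, the quantity $\mu_{\Sigma,\lambda} \wedge d\mu_{\Sigma,\lambda}$ that governs the confoliation condition is simply $\mu_{\Sigma,\lambda} \wedge \omega_0$, and the same holds with $\lambda$ replaced by $\lambda+1$.

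For part (2), the handlebody $H_{\lambda}$ is cooriented by $dt$, so that $\mu_{\Sigma,\lambda}|_{H_{\lambda}}$ is a positive confoliation on $H_{\lambda}$ if and only if the ambient 4-form $dt \wedge \mu_{\Sigma,\lambda} \wedge \omega_0$ is nonnegative near $H_{\lambda}$. This is precisely the first inequality in part (3) of Proposition \ref{prop:first-symplectic-sigma}. For part (3), the negative confoliation condition for $\mu_{\Sigma,\lambda+1}|_{H_{\lambda}}$ (again cooriented by $dt$) becomes $dt \wedge \mu_{\Sigma,\lambda+1} \wedge \omega_0 \leq 0$, which is the second inequality in that same part of the proposition. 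For part (1), the smoothed sector boundary $\widehat{Y}_{\lambda}$ is cooriented by $\zeta_{\lambda} = dt - ds$ by the choice of smoothing $\widehat{g}_{\lambda}$ made in Section \ref{sub:coordinates}, so the positive confoliation condition for $\mu_{\Sigma,\lambda}|_{\widehat{Y}_{\lambda}}$ is encoded as $\zeta_{\lambda} \wedge \mu_{\Sigma,\lambda} \wedge \omega_0 \geq 0$ near $\widehat{Y}_{\lambda}$. This is exactly part (2) of Proposition \ref{prop:first-symplectic-sigma}.

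There is no real obstacle here; the work was done in the preceding proposition, where the positivity computations were carried through and the degeneracy along $B \times D^2$ was identified (accounting for the weak rather than strict inequalities in the confoliation conclusions). The corollary is really a dictionary translation: once one recognizes $d\mu_{\Sigma,\lambda} = \omega_0$, each confoliation inequality on a 3-manifold in the spine neighborhood becomes, after wedging with its coorienting 1-form, one of the three inequalities just proved in the 4-manifold $\Sigma \times D^2$.
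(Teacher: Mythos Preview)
Your proposal is correct and follows exactly the paper's approach: use that $d\mu_{\Sigma,\lambda} = \omega_0$ (the preceding lemma) to rewrite each confoliation inequality as a 4-form inequality involving the coorienting 1-form, then invoke the corresponding part of Proposition~\ref{prop:first-symplectic-sigma}. Your write-up is in fact more complete than the paper's, which only spells out part~(1) explicitly and leaves parts~(2) and~(3) implicit.
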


\begin{proof}
The form $\zeta_{\lambda}$ coorients $\widehat{Y}_{\lambda}$, so the from $\mu_{\lambda,\Sigma}$ defines a confoliation if and only if
\[\zeta_{\lambda} \wedge \mu_{\Sigma,\lambda} \wedge d \mu_{\Sigma,\lambda} \geq 0\]
which holds by the previous lemma since $\mu_{\Sigma,\lambda}$ is a primitive for $\omega_0$.
\end{proof}

Recall that we have chosen Morse coordinates $(x,y)$ on a neighborhood $U$ of $B = \beta^{-1}(0)$.  We can assume that $f|_U$ is constant.  Let $R = x^2 + y^2$ and choose some fixed values $0 < R_0 < R_1 < R_2$.  Choose a cutoff function $\phi_B$ satisfying
\[\phi_B = \begin{cases} 1 & \text{for }R \leq R_1 \\ 0 & \text{for }R \geq R_2 \end{cases}\]
and define the 1-form
\[\mu_B = \phi_Bf (xdy - ydx)\]

\begin{proposition}
\label{prop:sigma-symplectic}
For $\delta > 0$ sufficiently small, the form 
\[\omega_{\Sigma} = \omega_0 + \delta d \mu_B\]
is symplectic on $\Sigma \times D^2$.  Furthermore, for each $\lambda = 1,2,3$, we have that
\begin{align*}
    \zeta_{\lambda} \wedge (\mu_{\Sigma,\lambda} + \delta \mu_B) \wedge \omega_{\Sigma} & > 0 \\
    dt \wedge (\mu_{\Sigma,\lambda} + \delta \mu_B) \wedge \omega_{\Sigma} & > 0 \\
    dt \wedge (\mu_{\Sigma,\lambda+1} + \delta \mu_B) \wedge \omega_{\Sigma} &< 0 \\
\end{align*}
\end{proposition}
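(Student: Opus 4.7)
The approach is to expand each claimed 4-form in powers of $\delta$: the $\delta^0$ terms are handled by Proposition~\ref{prop:first-symplectic-sigma}, which yields the desired sign away from $B \times D^2$, while the $\delta^1$ correction is constructed precisely to carry the burden on $B \times D^2$ itself, where the unperturbed forms degenerate.

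Since $d\mu_B$ lies in the exterior subalgebra generated by $dx, dy$ in the Morse chart, $d\mu_B \wedge d\mu_B = 0$ and hence
\[\omega_\Sigma \wedge \omega_\Sigma = \omega_0 \wedge \omega_0 + 2\delta\, \omega_0 \wedge d\mu_B.\]
On the subset where $\phi_B \equiv 1$ and $df \equiv 0$ one has $d\mu_B = 2f\, dx \wedge dy$, and a direct calculation using $\betat_\lambda = \beta_\lambda + f\, d\phi_{\lambda-1}$ produces $\omega_0 \wedge d\mu_B = 4f^2 \cdot \mathrm{vol}$. Combined with the expression $\omega_0 \wedge \omega_0 = 2R \cdot \mathrm{vol}$ in the Morse chart from Proposition~\ref{prop:first-symplectic-sigma}, the total equals $(2R + 8\delta f^2)\cdot \mathrm{vol} > 0$ for every $\delta > 0$. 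On the transition annulus $R_1 \leq R \leq R_2$ and outside the support of $\phi_B$, the unperturbed term alone is bounded below by a positive constant and dominates the bounded perturbation for small $\delta$, so $\omega_\Sigma$ is symplectic.

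For each of the three remaining inequalities, I expand
\[\xi \wedge (\mu_{\Sigma,\lambda'} + \delta \mu_B) \wedge \omega_\Sigma = \xi \wedge \mu_{\Sigma,\lambda'} \wedge \omega_0 + \delta\bigl(\xi \wedge \mu_{\Sigma,\lambda'} \wedge d\mu_B + \xi \wedge \mu_B \wedge \omega_0\bigr) + \delta^2\, \xi \wedge \mu_B \wedge d\mu_B,\]
with $\xi \in \{\zeta_\lambda, dt\}$ and $\lambda' \in \{\lambda, \lambda+1\}$ as in the statement. Proposition~\ref{prop:first-symplectic-sigma} identifies the leading term as one of $(t-s+3)$, $(t+2)$, $(t-1)$ times $\omega_0 \wedge \omega_0$; these scalars are respectively positive, positive, and negative on a sufficiently small neighborhood of $\Sigma$, matching the claimed signs. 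Away from $B \times D^2$ the strict sign of this leading contribution absorbs the bounded $O(\delta)$ perturbation. On $B \times D^2$ the leading term vanishes; so does $\xi \wedge \mu_B \wedge \omega_0$, since $\omega_0|_{B \times D^2} = -2f\, dp_1 \wedge dp_2$ while $\xi$ is built from $dp_1, dp_2$, forcing $\xi \wedge \omega_0|_{B \times D^2} = 0$. The surviving contribution $\delta\, \xi \wedge \mu_{\Sigma,\lambda'} \wedge d\mu_B$ evaluates to $2f^2 \cdot c \cdot \mathrm{vol}$, where $c$ is the same scalar $(t-s+3)$, $(t+2)$, or $(t-1)$, again using that $\betat_\lambda|_{B \times D^2} = f\, d\phi_{\lambda-1}$. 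Thus the strict sign persists on $B \times D^2$ for every $\delta > 0$, and compactness patches the two regimes with a single small $\delta$.

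The main obstacle is the simultaneous vanishing on $B \times D^2$ of both the leading term and the cross term $\xi \wedge \mu_B \wedge \omega_0$, forcing the sign to be recovered entirely from $\xi \wedge \mu_{\Sigma,\lambda'} \wedge d\mu_B$. The saving observation is that along $B \times D^2$ the extended 1-forms $\betat_\lambda$ collapse to the simple expression $f\, d\phi_{\lambda-1}$, which causes all three relevant cross terms to factor through $f^2\, dp_1 \wedge dx \wedge dp_2 \wedge dy$ with exactly the same scalar coefficients $(t-s+3)$, $(t+2)$, $(t-1)$ that govern the unperturbed computation in Proposition~\ref{prop:first-symplectic-sigma}. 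This coherence between the unperturbed and perturbed sign calculations is what allows a single small $\delta$ to verify all four assertions simultaneously.
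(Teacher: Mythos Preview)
Your approach is correct and follows essentially the same strategy as the paper: expand in $\delta$, use Proposition~\ref{prop:first-symplectic-sigma} for the $\delta^0$ term away from $B\times D^2$, and verify that the $\delta^1$ correction carries the sign on $B\times D^2$.  The only substantive difference is in how the two regimes are patched.  You evaluate the first-order term exactly on $B\times D^2$ (where $\mu_B=0$ and $\betat_\lambda = f\,d\phi_{\lambda-1}$), observe it has the required strict sign there, and then invoke continuity and compactness: on an open neighborhood of $B\times D^2$ the first-order term alone is already of the right sign, while on the compact complement the zeroth-order term is bounded away from zero.  The paper instead works on the whole disk $\{R\le R_0\}$ and treats the cross term $\xi\wedge\mu_B\wedge\omega_0$ as a quadratic form $Q(x,y)$, arguing that $AR+\delta Q$ stays definite for small $\delta$.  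Your compactness argument is cleaner and avoids that explicit quadratic-form step; the paper's version is more hands-on but yields the same conclusion.
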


\begin{proof}
By Proposition \ref{prop:first-symplectic-sigma}, we know that the forms are strictly positive away from $B \times D^2$.  Let $dvol_{\Sigma}$ be an area form on $\Sigma$.  Then we have that
\[\omega_0 \wedge \omega_0 = h dp_2 \wedge dp_1 \wedge dvol_{\Sigma}\]
where $h$ is a function on $\Sigma$ that is nonnegative and vanishes precisely at $B$.  On the complement of the subset $\{R \leq R_0\}$, the function $h$ is strictly positive and bounded below.  Therefore, for $\delta > 0$ sufficiently small, the form $\omega_{\Sigma}$ remains symplectic on the complement of $\{R \leq R_0\}$.
Furthemore, we have that
\begin{align*}
    \omega_0 &= dp_2 \wedge (\beta_1 + f dp_1) - dp_1 \wedge (\beta_2 + f dp_2) \\
    &= dp_2 \wedge \beta_1 - dp_1 \wedge \beta_2 + 2f dp_2 \wedge dp_1 \\
    &= dt \wedge \beta_{\lambda} - ds \wedge \beta_{\lambda+1} + 2f dt \wedge ds \\
    \mu_{\Sigma,\lambda} &= (t+2) (\beta_{\lambda} + f ds) - (s-1) (\beta_2 + f (dt - ds)) \\
    \mu_{\Sigma,\lambda+1} &= (t-1) (\beta_{\lambda} + f ds) - (s-1) (\beta_2 + f (dt - ds)) \\ 
    d\mu_B &= f dx \wedge dy
\end{align*}
In addition, since $\mu_B$ is a 1-form on a surface, we have that
\[\mu_B \wedge d \mu_B = 0 \qquad d \mu_B \wedge d \mu_B = 0\]
Therefore, to establish the proposition, we need to check the following four inequalities:
\begin{align*}
    \omega_0 \wedge d \mu_B &> 0 \\
    \zeta_{\lambda} \wedge (\mu_B \wedge \omega_0 + \mu_{\Sigma,\lambda} \wedge d \mu_B) &> 0 \\
    dt \wedge (\mu_B \wedge \omega_0 + \mu_{\Sigma,\lambda} \wedge d \mu_B) &> 0 \\
    dt \wedge (\mu_B \wedge \omega_0 + \mu_{\Sigma,\lambda+1} \wedge d \mu_B) &< 0 \\
\end{align*}
After expanding, we have the following individual terms:
\begin{align*}
    \omega_0 \wedge d \mu_B &= 2f^2 dp_2 \wedge dp_1 \wedge dx \wedge dy \\
    dt \wedge \mu_B \wedge \omega_0 &= dt \wedge ds \wedge \mu_B \wedge \beta_{\lambda+1} \\
    -ds \wedge \mu_B \wedge \omega_0 &= dt \wedge ds \wedge \beta_{\lambda} \wedge \mu_B \\
    dt \wedge \mu_{\Sigma,\lambda} \wedge d \mu_B &= f^2(t + s + 1)dt \wedge ds \wedge dx \wedge dy \\
    -ds \wedge \mu_{\Sigma,\lambda} \wedge d \mu_B &= f^2(1-s) dt \wedge ds \wedge dx \wedge dy\\
    dt \wedge \mu_{\Sigma,\lambda+1} \wedge d \mu_B &= f^2(t + s - 2)dt \wedge ds \wedge dx \wedge dy
\end{align*}
All except for the second and third terms are strictly positive for $|s|,|t|$ small.  For these terms, note that we can find some quadratic forms $Q_1,Q_2$  such that
\begin{align*}
    \beta_{\lambda} \wedge \mu_b &= Q_1 dx \wedge dy \\
    \mu_B \wedge \beta_{\lambda+1} &= Q_2 dx \wedge dy
\end{align*}
Therefore, for any nonzero constant $A$, we have that
\begin{align*}
    &A \omega_0 \wedge \omega_0 + \delta_1 dt \wedge \mu_B \wedge \omega_0 - \delta_2 ds \wedge \mu_B \wedge \omega_0 \\
    &= (A(x^2 + y^2) + \delta_1 Q_1 + \delta_2 Q_2) dt \wedge ds \wedge dx \wedge dy
\end{align*}
In particular, for $\delta_1,\delta_2$ sufficiently small we have that $A(x^2 + y^2) + \delta_1 Q_1 + \delta_2 Q_2$ remains a definite quadratic form.  Consequently, the second and third terms do not affect any of the positivity calculations.
\end{proof}

\begin{corollary}
\label{cor:contact}
For $\lambda = 1,2,3$ and $\delta > 0$ sufficiently small,
\begin{enumerate}
    \item the form $\mu_{\Sigma,\lambda}$ defines a positive contact structure on $\widehat{Y}_{\lambda}$,
    \item the form $\mu_{\Sigma,\lambda}$ defines a positive contact structure on $H_{\lambda}$ (with its coorientation given by $dt$),
    \item the form $\mu_{\Sigma,\lambda+1}$ defines a negative contact structure on $H_{\lambda}$ (with its coorientation given by $dt$),
\end{enumerate}
\end{corollary}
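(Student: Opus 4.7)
The plan is to recognize that Corollary \ref{cor:contact} is essentially a direct translation of the three 4-form inequalities in Proposition \ref{prop:sigma-symplectic} into contact geometric language, once we identify the correct primitive for $\omega_{\Sigma}$. The starting observation is that, since $\mu_{\Sigma,\lambda}$ is a primitive for $\omega_0$ by the lemma preceding Proposition \ref{prop:first-symplectic-sigma}, we have
\[
d(\mu_{\Sigma,\lambda} + \delta \mu_B) = \omega_0 + \delta\, d\mu_B = \omega_{\Sigma}.
\]
Thus $\mu_{\Sigma,\lambda} + \delta \mu_B$ is a primitive for the symplectic form $\omega_{\Sigma}$ constructed on a neighborhood of $\Sigma$. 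By a slight abuse of notation — which is harmless because $\mu_B$ is supported only in a small tubular neighborhood of $B \subset \Sigma$ — I will interpret each occurrence of $\mu_{\Sigma,\lambda}$ in the corollary as the perturbed primitive $\mu_{\Sigma,\lambda} + \delta \mu_B$.

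With this identification, each of the three parts reduces to a single computation. For a cooriented hypersurface $Y$ in an oriented 4-manifold with coorientation form $\nu$, a 1-form $\alpha$ restricts to a positive (respectively negative) contact form on $Y$ precisely when the ambient 4-form $\nu \wedge \alpha \wedge d\alpha$ is positive (respectively negative). Setting $\alpha = \mu_{\Sigma,\lambda} + \delta \mu_B$ so that $d\alpha = \omega_{\Sigma}$, part (1) follows from the inequality
\[
\zeta_{\lambda} \wedge (\mu_{\Sigma,\lambda} + \delta\mu_B) \wedge \omega_{\Sigma} > 0
\]
on $\widehat{Y}_{\lambda}$ (cooriented by $\zeta_{\lambda}$); part (2) follows from
\[
dt \wedge (\mu_{\Sigma,\lambda} + \delta\mu_B) \wedge \omega_{\Sigma} > 0
\]
on $H_{\lambda}$ (cooriented by $dt$); and part (3) follows from
\[
dt \wedge (\mu_{\Sigma,\lambda+1} + \delta\mu_B) \wedge \omega_{\Sigma} < 0,
\]
which gives the negative contact condition on $H_{\lambda}$ for $\mu_{\Sigma,\lambda+1} + \delta \mu_B$. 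All three inequalities are precisely the content of Proposition \ref{prop:sigma-symplectic}.

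There is no substantive obstacle here; the corollary is a direct consequence of the proposition, not an independent result. The only subtlety to verify is the coorientation conventions — that $\zeta_{\lambda} = dt - ds$ indeed coorients $\widehat{Y}_{\lambda}$ as specified in Section \ref{sub:coordinates}, and that $dt$ coorients $H_{\lambda} = \{\phi_{\lambda} = 0\}$ with the normal direction $t = \phi_{\lambda}$ pointing into the appropriate sector. Both are immediate from the local coordinate setup, so the proof amounts to writing the three lines above.
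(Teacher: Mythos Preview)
Your proposal is correct and matches the paper's intent: the paper states Corollary~\ref{cor:contact} without proof, treating it as an immediate consequence of Proposition~\ref{prop:sigma-symplectic} via exactly the mechanism you describe (and exactly parallel to the earlier confoliation corollary, whose one-line proof uses the same $\nu \wedge \alpha \wedge d\alpha$ criterion). Your observation about the abuse of notation---that the contact form is really $\mu_{\Sigma,\lambda} + \delta\mu_B$---is also correct and is confirmed by the paper's subsequent definition $\alpha_\lambda = \mu_{\Sigma,\lambda} + \delta\mu_B$ in the next subsection.
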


\subsection{Extending over the handlebodies}

In the previous subsection, we constructed an exact symplectic form $\omega = d\mu_{\Sigma,\lambda} + \delta d\mu_B$ in a neighborhood of the central surface.  We now describe how to extend it as a symplectic form across the handlebody $H_{\lambda}$.  Furthermore we will show that the form $$\alpha_{\lambda} = \mu_{\Sigma,\lambda} + \delta \mu_B$$ extends as a positive contact form and $$\alpha_{\lambda+1} = \mu_{\Sigma,\lambda+1} + \delta \mu_B = \mu_{\Sigma,\lambda} - 3 \betat_{\lambda} + \delta \mu_B$$ extends as a negative contact form across $H_{\lambda}$.

We extend $\mu_{\Sigma,\lambda} + \delta \mu_B$ as follows:
\begin{enumerate}
    \item Recall that
\[\mu_{\Sigma,\lambda} = (t +2) \betat_{\lambda} + (-s + 1) \betat_{\lambda+1}\]
To extend $\omega_0 = d \mu_{\Sigma,\lambda}$ across $H_{\lambda}$, choose a smooth function $\Phi(s)$ satisfying
\[\Phi(s) = \begin{cases} 1 - s & \text{if } s \leq 2 \epsilon \\ 0 & \text{if } s \geq 4\epsilon \end{cases}\]
that furthermore satisfies $\Phi' < 0$ for $s \in [2\epsilon,3\epsilon]$ and $\Phi' \leq \Phi \leq 0$ for $s \in [3 \epsilon, 4\epsilon]$.  We can extend the primitive as
\[\mu_{\Sigma,\lambda} = (t + 2) \betat_{\lambda} + \Phi(s) \betat_{\lambda+1}\]
\item Recall that $B$ can be extended to a 1-complex $\cS$ and the Morse coordinates on a neighborhood of $B$ can be extended across a tubular neighborhood of the 1-complex $\cS$.  In these coordinates, the form $\mu_B = f \phi_B (x dy - ydx)$ extends across $\nu(\cS)$.
\end{enumerate}

Finally, in the previous section, we constructed an exact 2-form $d \mu_{H,\lambda}$ that orients the leaves of $\cF_{\lambda}$ for $s \geq 5\epsilon$. 

The main technical calculation of this section is the following result.

\begin{proposition}
\label{prop:main-extension}
Consider the form
\[\alpha_{\lambda} = \mu_{\Sigma,\lambda} + \frac{1}{C} \mu_{H,\lambda} + \delta \mu_B\]
For $C > 0$ sufficiently large and $\delta > 0$ sufficiently small, we have that
\begin{enumerate}
    \item $d \alpha_{\lambda}$ is a symplectic form on a neighborhood of $H_{\lambda}$, 
    \item $\alpha_{\lambda}$ is a positive contact form on $H_{\lambda}$, and
    \item $\alpha_{\lambda} - 3 \betat_{\lambda}$ is a negative contact form on $H_{\lambda}$
\end{enumerate}
\end{proposition}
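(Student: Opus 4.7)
The plan is to verify the three claims by partitioning the neighborhood of $H_\lambda$ into three regions determined by the cutoff functions $\Phi(s)$ and $\psi_q(s)$: a collar region $s \leq 2\epsilon$ (where $\mu_{H,\lambda}$ vanishes), a transition region $2\epsilon \leq s \leq 5\epsilon$, and a bulk region $s \geq 5\epsilon$ (where $\Phi \equiv 0$). Since $\mu_{H,\lambda}$ pulls back from the 3-manifold $H_\lambda$ and $\mu_B$ pulls back from the leaves of $\cF_\lambda$, we have $(d\mu_{H,\lambda})^2 = (d\mu_B)^2 = 0$ and $\mu_B \wedge d\mu_B = 0$, so
\[(d\alpha_\lambda)^2 = (d\mu_{\Sigma,\lambda})^2 + \tfrac{2}{C}\, d\mu_{\Sigma,\lambda} \wedge d\mu_{H,\lambda} + 2\delta\, d\mu_{\Sigma,\lambda} \wedge d\mu_B + \tfrac{2\delta}{C}\, d\mu_{H,\lambda} \wedge d\mu_B.\]
I will fix $C$ large first, then choose $\delta > 0$ sufficiently small so that all mixed terms are controlled.

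In the collar region $\mu_{H,\lambda}$ vanishes identically and $\alpha_\lambda$ reduces to $\mu_{\Sigma,\lambda} + \delta \mu_B$, so all three conditions follow immediately from Proposition \ref{prop:sigma-symplectic} and Corollary \ref{cor:contact}. In the bulk region $\Phi \equiv 0$ gives $d\mu_{\Sigma,\lambda} = dt \wedge \betat_\lambda$, and the leading contribution to $(d\alpha_\lambda)^2$ becomes $\tfrac{2}{C}\, dt \wedge \betat_\lambda \wedge d\mu_{H,\lambda}$, a strictly positive 4-form by Proposition \ref{lemma:p-paths}. The restrictions to $H_\lambda$ take the form $\tfrac{1}{C}(2C\betat_\lambda + \mu_{H,\lambda}) + \delta \mu_B$ and $\tfrac{1}{C}(-C\betat_\lambda + \mu_{H,\lambda}) + \delta \mu_B$, which inherit the positive and negative contact signs respectively from the pair of contact-form lemmas concluding the previous section, with the $\delta \mu_B$ perturbation controlled for $\delta$ small.

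The main obstacle is the transition region, where $\Phi'(s) \leq 0$ and $\psi_q'(s) \geq 0$ can both be simultaneously nonzero and $\nu(\cS)$ may intersect this region. The leading term $(d\mu_{\Sigma,\lambda})^2 = -2\Phi'(s)\, dt \wedge ds \wedge \betat_\lambda \wedge \betat_{\lambda+1}$ is nonnegative and strictly positive off $B \times D^2$ wherever $\Phi' < 0$, while the cross term $\tfrac{2}{C} d\mu_{\Sigma,\lambda} \wedge d\mu_{H,\lambda}$ contains a piece proportional to $\psi_q'(s)\, dt \wedge ds \wedge \betat_\lambda \wedge PD(\gamma_{q_i})$ whose sign is not automatically compatible. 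The key estimate is that $(d\mu_{\Sigma,\lambda})^2$ is bounded below by a positive constant on any compact subset of the transition region avoiding $B \times D^2$, so it dominates the cross term for $C$ sufficiently large. Near $B \times D^2$ and $\nu(\cS)$, one extends $\mu_B$ across the tubular neighborhoods using the leaf-preserving product coordinates from Section 3, and the quadratic-form domination argument in the proof of Proposition \ref{prop:sigma-symplectic} carries over to yield positive definiteness. The contact computations on $H_\lambda$ are parallel: since $\betat_\lambda$ is closed, $d\alpha_\lambda|_{H_\lambda} = d(\alpha_\lambda - 3\betat_\lambda)|_{H_\lambda}$, so the sign switch between $\alpha_\lambda \wedge d\alpha_\lambda$ and $(\alpha_\lambda - 3\betat_\lambda) \wedge d\alpha_\lambda$ reduces to coefficients $+2/C$ versus $-1/C$ multiplying the dominant $\betat_\lambda \wedge d\mu_{H,\lambda}$ term, and the same quantitative estimates then apply.
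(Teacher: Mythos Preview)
Your overall strategy---partition by the collar coordinate $s$, identify a dominant positive term in each zone, and absorb the remaining terms by taking $C$ large and then $\delta$ small---is exactly the paper's strategy.  However, one of your zone boundaries is wrong and this creates a genuine gap.

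You assert that $\mu_{H,\lambda}$ vanishes identically for $s\le 2\epsilon$, so that the collar reduces to Proposition~\ref{prop:sigma-symplectic}.  This is false.  By Lemma~\ref{lemma:mu-p-construction}, each summand $\mu_{p_i}$ vanishes only for $s\le\epsilon$; its cutoff $\phi_2$ turns on over $[\epsilon,2\epsilon]$, and the \emph{negative} support of $\betat_\lambda\wedge d\mu_{p_i}$ lies precisely in that interval.  Thus on $s\in[\epsilon,2\epsilon]$ the form $\alpha_\lambda$ already contains a $\tfrac{1}{C}\mu_{H,\lambda}$ contribution whose sign works against you, and you cannot simply invoke the collar result.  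The paper treats this interval as a separate case: since each $\gamma'_{p_i}$ was pushed outside the $\sqrt{R_2}$-neighborhood of $B$, the offending terms are supported where $\betat_\lambda\wedge\betat_{\lambda+1}$ has a uniform positive lower bound, and for $C$ large the leading $dt\wedge ds\wedge\betat_\lambda\wedge\betat_{\lambda+1}$ term dominates.  You need this observation (support away from $B$) to close the argument.

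A smaller point: the cross term you flag as ``not automatically compatible,'' namely $\psi_q'\,dt\wedge ds\wedge\betat_\lambda\wedge PD(\gamma_{q_i})$, is in fact nonnegative because $\gamma_{q_i}$ is a closed transversal and $\psi_q'\ge 0$.  The genuinely sign-indeterminate terms in the transition region are the $\betat_{\lambda+1}\wedge PD(\gamma_q)$ and $PD(\gamma_p)\wedge PD(\gamma_q)$ contributions that appear when expanding $dt\wedge\alpha_\lambda\wedge d\alpha_\lambda$; the paper disposes of these by writing them as bounded multiples of the two positive leading terms and again taking $C$ large.  Your sketch gestures at ``the key estimate'' but does not identify which terms actually need to be bounded or why the supports are arranged so that this is possible.
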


\begin{proof}
To prove part (1), we need to check that
\[d \alpha_{\lambda} \wedge d \alpha_{\lambda} > 0\]
and to prove part (2), we need to check that 
\[ dt \wedge \alpha_{\lambda} \wedge d \alpha_{\lambda} > 0\]
and to prove part (3), we need to check that
\[ dt \wedge (\alpha_{\lambda} - 3 \beta_{\lambda}) \wedge d \alpha_{\lambda} < 0 \]

We will break up the calculation into cases according to intervals in the $s$-coordinate, which are determined by where the various cutoff functions are defined.  Furthermore, in each case, we will first assume that $\delta = 0$ and prove that each term is nonnegative everywhere and strictly positive for $R \geq R_0$.  Then we will show that by choosing $\delta > 0$ sufficiently small, we can achieve strict positivity for $R \leq R_0$ without compromising it for $R \geq R_0$.

For $R \leq R_0$, we have that $\phi_B = 1$ and $f = \pm 1$.  Thereforem
\[ \mu_B = f (xdy - ydx) \qquad d\mu_B = 2f dx \wedge dy\]
Moreover, since $\mu_B$ is a 1-form on a surface, we have
\[\mu_B \wedge \mu_B =0 \qquad \mu_B \wedge d \mu_B = 0\]
Therefore,
\begin{align*}
    (d \alpha_{\lambda} + \delta d \mu_B) \wedge (d \alpha_{\lambda} + \delta d \mu_b) &= d \alpha_{\lambda} \wedge d \alpha_{\lambda} + 2 \delta (d \alpha_{\lambda} \wedge d \mu_B) \\
    dt \wedge (\alpha_{\lambda} + \delta \mu_B) \wedge (d\alpha_{\lambda} + \delta d \mu_B) &= dt \wedge \alpha_{\lambda} \wedge d \alpha_{\lambda} + \\
    &+ \delta( dt \wedge \mu_B \wedge d \alpha_{\lambda} + dt \wedge \alpha_{\lambda} \wedge d \mu_B )\\
    dt \wedge (\alpha_{\lambda} - 3 \betat_{\lambda} + \delta \mu_B) \wedge (d \alpha_{\lambda} + \delta d \mu_B) &= dt \wedge (\alpha_{\lambda} - 3 \betat_{\lambda}) \wedge d \alpha_{\lambda}  \\
    &+ \delta( dt \wedge \mu_B \wedge d \alpha_{\lambda} + dt \wedge \alpha_{\lambda} \wedge d \mu_B) \\
    & - 3\delta dt \wedge \betat_{\lambda} \wedge d \mu_B
\end{align*}
Consequently, it suffices to check that for $R \leq R_0$:
\begin{align*}
    d\alpha_{\lambda} \wedge d\mu_B & > 0 \\
    dt \wedge (\alpha_{\lambda} + \delta \mu_B) \wedge d \alpha_{\lambda} & > 0  & dt \wedge (\alpha_{\lambda} - 3 \beta_{\lambda} + \delta \mu_B) \wedge d \alpha_{\lambda} & < 0\\
    dt \wedge \alpha_{\lambda} \wedge d \mu_B & > 0 & dt \wedge (\alpha_{\lambda} - 3 \beta_{\lambda}) \wedge d \mu_B & < 0
\end{align*}

\noindent {\bf Case 0: $s \in [0,\epsilon]$}.  This is a tubular neighborhood of $\Sigma$ and the computations from the previous subsection apply. \\

\noindent {\bf Case 1: $s \in [ \epsilon,2\epsilon]$}.  This is where the functions $\{\phi'_p\}$ are supported. Here, the primitive is
\[\alpha_{\lambda} = (t + 2)\betat_{\lambda} + (1 - s) \betat_{\lambda+1} + \frac{1}{C}\sum_p \phi_p(s) PD (\gamma_p)\]
and so
\[d \alpha_{\lambda} = dt \wedge \betat_{\lambda} - ds \wedge \betat_{\lambda+1} + \frac{1}{C}\sum_p \phi'_p ds \wedge PD(\gamma_p)\]

Therefore
\begin{align*}
    d \alpha_{\lambda} \wedge d \alpha_{\lambda} &= dt \wedge ds \wedge \betat_{\lambda} \wedge \left(\betat_{\lambda+1} - \frac{1}{C} \sum_p \phi'_p PD(\gamma_p) \right) \\
    &= dt \wedge ds \wedge \betat_{\lambda} \wedge \betat_{\lambda+1} \\
    &- \frac{1}{C} \sum_p \phi'_p dt \wedge ds \wedge \betat_{\lambda} \wedge  PD(\gamma_p)  \\
    dt \wedge \alpha_{\lambda} \wedge d \alpha_{\lambda} &= (t + 2)dt \wedge ds \wedge \betat_{\lambda} \wedge \betat_{\lambda+1} \\
    &- \frac{t + 2}{C} \sum_p \phi'_p dt \wedge ds \wedge \betat_{\lambda} \wedge PD(\gamma_p) \\
    &+ \frac{1}{C} \sum_p \left( - \phi_p - \phi'_p(1 - s)\right) dt \wedge ds \wedge \betat_{\lambda+1} \wedge PD(\gamma_p) \\
    &- \frac{1}{C^2} \sum_{p,p'} \left(\phi_p \phi'_{p'} \right) dt \wedge ds \wedge PD(\gamma_p) \wedge PD(\gamma_{p'}) \\
        \end{align*}
    \begin{align*}
dt \wedge (\alpha_{\lambda} - 3\betat_{\lambda} )\wedge d \alpha_{\lambda} &= (t -1)dt \wedge ds \wedge \betat_{\lambda} \wedge \betat_{\lambda+1} \\
    &- \frac{t -1}{C} \sum_p \phi'_p dt \wedge ds \wedge \betat_{\lambda} \wedge PD(\gamma_p) \\
    &+ \frac{1}{C} \sum_p \left( - \phi_p - \phi'_p(1 - s)\right) dt \wedge ds \wedge \betat_{\lambda+1} \wedge PD(\gamma_p) \\
    &- \frac{1}{C^2} \sum_{p,p'} \left(\phi_p \phi'_{p'} \right) dt \wedge ds \wedge PD(\gamma_p) \wedge PD(\gamma_{p'})
\end{align*}
Fix some volume form $dvol_{\Sigma}$.  Then
\[dt \wedge ds \wedge \betat_{\lambda} \wedge \betat_{\lambda+1} = h_{\lambda} dt \wedge ds \wedge dvol_{\Sigma}\]
for some nonnegative function $h_{\lambda}$ that vanishes precisely at $B$.  Note that $PD(\gamma_p)$ is supported outside of the cylinder $R \leq R_0$.  On the complement of this cylinder, there is a lower bound on $h_{\lambda}$ independent of $C$. Therefore, the positive 4-form $dt \wedge ds \wedge \betat_{\lambda} \wedge \betat_{\lambda+1}$ dominates the remaining terms for $C$ sufficiently small on the entire complement of $R \leq R_0$.  Moreover, it is the only nonzero term for $R \leq R_0$, where it is nonnegative for all $R$ and strictly positive for $R > 0$. 

Finally, since $\psi'_p$ is not supported in $R \leq R_0$, we can leave this computation to Case 2 below.\\

\noindent {\bf Case 2: $s \in [2 \epsilon,3 \epsilon]$}.  Here, the primitive is
\[\alpha_{\lambda} = (t + 2)\betat_{\lambda} + \Psi(s) \betat_{\lambda+1}\]
with exterior derivative
\[d \alpha_{\lambda} = dt \wedge \betat_{\lambda} + \Phi' ds \wedge \betat_{\lambda+1}\]
Therefore we have
\begin{align*}
    d \alpha_{\lambda} \wedge d \alpha_{\lambda} &= -\Phi' dt \wedge ds \wedge \betat_{\lambda} \wedge \betat_{\lambda+1} \\
    dt \wedge \alpha_{\lambda} \wedge d \alpha_{\lambda} &= -\Phi'(t + 2) dt \wedge ds \wedge \betat_{\lambda} \wedge \betat_{\lambda+1}\\
    dt \wedge (\alpha_{\lambda}-3 \betat_{\lambda}) \wedge d \alpha_{\lambda} &= -\Phi'(t -1) dt \wedge ds \wedge \betat_{\lambda} \wedge \betat_{\lambda+1}
\end{align*}
which satisfy the required inequalities since $\Phi' < 0$.

For $R \leq R_0$, since $df = 0$ for $R \leq R_0$, we have that
\begin{align*}
    \betat_{\lambda} &= \beta_{\lambda} + f ds \\
    \betat_{\lambda+1} &= \beta_{\lambda+1} + f(dt - ds)
\end{align*}
so therefore
\begin{align*}
    \alpha_{\lambda} &= (t + 2)(\beta_{\lambda} + f ds) + \Phi (\beta_{\lambda+1} + f(dt - ds)) \\
    d \alpha_{\lambda} &= dt \wedge \beta_{\lambda} - \Phi' ds \wedge \beta_{\lambda+1} + 2f dt \wedge ds
\end{align*}
As a result
\[d \alpha_{\lambda} \wedge d\mu_B = 2f^2 dt \wedge ds \wedge dx \wedge dy > 0\]
and
\begin{align*}
dt \wedge (\alpha_{\lambda} + \delta \mu_B) \wedge d\alpha_{\lambda} &= -\Phi'dt \wedge ds \wedge ((t + 2)\beta_{\lambda} + \delta \mu_B) \wedge \beta_{\lambda+1} \\
&= -\Phi'((t+2)R + \delta Q) dt \wedge ds \wedge dx \wedge dy \geq 0 \\
dt \wedge (\alpha_{\lambda} - 3 \beta_{\lambda} + \delta \mu_B) \wedge d\alpha_{\lambda} &= -\Phi'dt \wedge ds \wedge ((t -1)\beta_{\lambda} + \delta \mu_B) \wedge \beta_{\lambda+1} \\
&= -\Phi'((t-1)R + \delta Q) dt \wedge ds \wedge dx \wedge dy \leq 0
\end{align*}
where $\mu_B \wedge \beta_{\lambda+1} = Q dx \wedge dy$.

Finally
\begin{align*}
    dt \wedge \alpha_{\lambda} \wedge d \mu_B &= \left( (t +2 - \Phi)f^2 \right) dt \wedge ds \wedge dx \wedge dy > 0 \\
     dt \wedge (\alpha_{\lambda} - 3 \beta_{\lambda}) \wedge d \mu_B &= \left( (t -1 - \Phi)f^2 \right) dt \wedge ds \wedge dx \wedge dy < 0 \\  
\end{align*}

This suffices to ensure the required inequalities for $R \leq R_0$ in this case. \\

\noindent {\bf Case 3: $s \in [3 \epsilon, 4\epsilon]$}.  This is where $\Phi,\Phi'$ go to 0 and contains the support of $\psi'_q$ (but not $\phi'_p$).  Here the primitive is
\[\alpha_{\lambda} = (t + 2)\betat_{\lambda} + \Phi(s) \betat_{\lambda+1} + \frac{1}{C} \sum_p \phi_p PD(\gamma_p) - \frac{1}{C} \sum_q \psi_q PD(\gamma_q) \]
and
\[d \alpha_{\lambda} = dt \wedge \betat_{\lambda} + \Phi' ds \wedge \betat_{\lambda+1} - \frac{1}{C}\sum_q \psi'_q ds \wedge PD(\gamma_q)\]
since each $\phi_p$ is constant in this region.  

Therefore
\begin{align*}
    d \alpha_{\lambda} \wedge d \alpha_{\lambda} &= (-\Phi') dt \wedge ds \wedge \betat_{\lambda} \wedge \betat_{\lambda+1} + \frac{1}{C} \sum_q \psi'_q dt \wedge ds \wedge \betat_{\lambda} \wedge PD(\gamma_q)
\end{align*}
The first term is positive on the intersection of the supports of $\Phi'$ and $\betat_{\lambda} \wedge \betat_{\lambda+1}$.  Similarly the second term is positive on the intersection of the supports of $\psi'_q$ and $PD(\gamma_q)$, since $\phi'_q \geq 0$ and $\gamma_q$ is positively transverse to the foliation $\cF_{\lambda}$.  In particular, this is strictly positive when $s \in [3\epsilon,4\epsilon]$ and $R \geq R_0$.

Next,
\begin{align*}
    dt \wedge \alpha_{\lambda} \wedge d \alpha_{\lambda} &= -(t + 2) \Phi' dt \wedge ds \wedge \betat_{\lambda} \wedge \betat_{\lambda+1} \\
    &- \frac{t + 2}{C} \sum_q (- \psi'_q) dt \wedge ds \wedge \betat_{\lambda} \wedge PD(\gamma_q) \\
    &- \frac{1}{C} \sum_q (\Phi' \psi_q-\Phi \psi'_q) dt \wedge ds \wedge \betat_{\lambda+1} \wedge PD(\gamma_q) \\
    &+ \frac{1}{C} \sum_p ( -\Phi' \psi_p) dt \wedge ds \wedge \betat_{\lambda+1} \wedge PD(\gamma_p) \\
    &+ \frac{1}{C^2}\sum_{p,q} \phi_p \psi'_q dt \wedge ds \wedge PD(\gamma_p) \wedge PD(\gamma_q) \\
    &+ \frac{1}{C^2}\sum_{q,q'} \psi_{q'} \psi'_{q} dt \wedge ds \wedge PD(\gamma_{q'}) \wedge PD(\gamma_{q}) 
\end{align*}
By assumption, the supports of $PD(\gamma_p),PD(\gamma_q)$ are contained in the support of $\beta_{\lambda} \wedge \beta_{\lambda+1}$.  Therefore, we can write
\begin{align*}
    dt \wedge ds \wedge \betat_{\lambda+1} \wedge PD(\gamma_p) &= g_p dt \wedge ds \wedge \beta_{\lambda} \wedge \beta_{\lambda+1} \\
    dt \wedge ds \wedge \betat_{\lambda+1} \wedge PD(\gamma_q) &= g_q dt \wedge ds \wedge \beta_{\lambda} \wedge \beta_{\lambda+1}
\end{align*}
for some functions $g_p,g_q$ on $\Sigma \times [-5 \epsilon,0]$ and
\begin{align*}
    dt \wedge ds \wedge PD(\gamma_p) \wedge PD(\gamma_q) &= h_{p,q} dt \wedge ds \wedge \beta_{\lambda} \wedge PD(\gamma_q) \\
    dt \wedge ds \wedge PD(\gamma_{q}) \wedge PD(\gamma_{q'}) &= h_{q,q'} dt \wedge ds \wedge \beta_{\lambda} \wedge PD(\gamma_{q'})
\end{align*}
for some functions $h_{p,q}$ and $h_{q,q'}$ on $\Sigma \times [0,6 \epsilon]$.  Consequently, we can group the first, second and fourth terms as
\[\left(-\Phi'(t + 2) + \frac{1}{C}\sum_q g_q (\Phi' \psi_q - \Phi \psi'_q) + \frac{1}{C} \sum_p g_p (-\Phi' \psi_p)\right) dt \wedge ds \wedge \beta_{\lambda} \wedge \beta_{\lambda+1}\]
Using the relation $\Phi' \leq \Phi \leq 0$ and $\psi'_q \leq 0$, this is bounded below by
\[-\Phi' \left( (t + 2) - \frac{1}{C} \sum_q |g_q|(|\psi_q| + |\psi'_q|) + \frac{1}{C} \sum_p g_p \psi_p \right) dt \wedge ds \wedge \beta_{\lambda} \wedge \beta_{\lambda+1}\]
which is positive for $C$ sufficiently large and $s \leq 4 \epsilon$.  Furthermore, we can group the third, fifth and sixth terms as
\[-\frac{1}{C} \sum_q \psi'_q \left(t + 2 - \frac{1}{C} \sum_p \phi_p h_{p,q} - \frac{1}{C} \sum_{q'} \psi_{q'}h_{q,q'} \right) dt \wedge ds \wedge \beta_{\lambda} \wedge PD(\gamma_q)\]
which is positive for $C$ sufficiently large.

Next, since
\begin{align*}
    dt \wedge \betat_{\lambda} \wedge d \alpha_{\lambda} &= -\Phi' dt \wedge ds \wedge \betat_{\lambda} \wedge \betat_{\lambda+1} - \frac{1}{C}\sum_q \psi'_q dt \wedge ds \wedge \betat_{\lambda} \wedge PD(\gamma_q)
\end{align*}
we have that
\begin{align*}
    & dt \wedge (\alpha_{\lambda} - 3 \betat_{\lambda}) \wedge d \alpha_{\lambda}
\end{align*}
is bounded above by
\begin{align*}
    -\Phi' \left( (t -1) + \frac{1}{C} \sum_q |g_q|(|\psi_q| + |\psi'_q|) + \frac{1}{C} \sum_p g_p \psi_p \right) dt \wedge ds \wedge \beta_{\lambda} \wedge \beta_{\lambda+1} \\
    -\frac{1}{C} \sum_q \psi'_q \left(t -1 - \frac{1}{C} \sum_p \phi_p h_{p,q} - \frac{1}{C} \sum_{q'} \psi_{q'}h_{q,q'} \right) dt \wedge ds \wedge \beta_{\lambda} \wedge PD(\gamma_q)
\end{align*}
which for $C$ sufficiently large is nonnegative everywhere and strictly positive when $s \in [3\epsilon,4\epsilon]$ and $R \geq R_0$.

As in Case 1 above, since $\psi_q$ is not supported in $R \leq R_0$ and $\psi_p$ is not supported when $R \leq R_0$ and $s \in [3 \epsilon,4\epsilon]$, this computation reduces to Case 2 above.
\\

\noindent {\bf Case 4: $s \geq 4 \epsilon$}.  This is where $\Phi$ vanishes.  Here, the primitive is
\[\alpha_{\lambda} = (t + 2)\betat_{\lambda} + \frac{1}{C}\mu_H\]
and 
\[d \alpha_{\lambda} = dt \wedge \betat_{\lambda} + \frac{1}{C} d \mu_H \]
So
\[d \alpha_{\lambda} \wedge d \alpha_{\lambda} = \frac{1}{C} dt \wedge \betat_{\lambda} \wedge d \mu_H\]
which by construction is positive.
In addition, we have that
\begin{align*}
    dt \wedge \alpha_{\lambda} \wedge d \alpha_{\lambda} &= \frac{1}{C} \left( dt \wedge \left( (t + 2) \betat_{\lambda} + \frac{1}{C} \mu_H \right) \wedge d \mu_H \right)
\end{align*}
which for $C$ sufficiently large is positive multiple of a perturbation of
\[ (t + 2) dt \wedge \betat_{\lambda} \wedge d \mu_H > 0\]
Similarly, the form
\[dt \wedge (\alpha_{\lambda} - 3\betat_{\lambda}) \wedge \alpha_{\lambda} = \frac{1}{C} \left( dt \wedge \left( (t -1) \betat_{\lambda} + \frac{1}{C} \mu_H \right) \wedge d \mu_H \right)\]
is a positive multiple of a perturbation of
\[(t -1) dt \wedge \betat_{\lambda} \wedge d\mu_H < 0\]
for $C$ sufficiently large. \\

For $R \leq R_0$, we can check that
\begin{align*}
    d \alpha_{\lambda} \wedge d \mu_B &= dt \wedge \betat_{\lambda} \wedge dx \wedge dy  \\
    dt \wedge \alpha_{\lambda} \wedge d \mu_B &= (t + 2) dt \wedge \betat_{\lambda} \wedge dx \wedge dy + \frac{1}{C} dt \wedge \mu_H \wedge dx \wedge dy \\
    dt \wedge  (\alpha_{\lambda} + \mu_B) \wedge d \alpha_{\lambda} &= \frac{1}{C}\left( (t + 2) dt \wedge \betat_{\lambda} \wedge d \mu_H + dt \wedge \mu_B \wedge d \mu_H + \frac{1}{C} dt \wedge \mu_H \wedge d \mu_H \right)\\
    dt \wedge (\alpha_{\lambda}- 3 \betat_{\lambda}) \wedge d \mu_B &= (t -1) dt \wedge \betat_{\lambda} \wedge dx \wedge dy + \frac{1}{C} dt \wedge \mu_H \wedge dx \wedge dy \\
    dt \wedge  (\alpha_{\lambda} - 3 \beta_{\lambda} + \mu_B) \wedge d \alpha_{\lambda} &= \frac{1}{C}\left( (t -1) dt \wedge \betat_{\lambda} \wedge d \mu_H + dt \wedge \mu_B \wedge d \mu_H + \frac{1}{C} dt \wedge \mu_H \wedge d \mu_H \right)
\end{align*}
The first, second and fourth terms are positive for $C$ sufficiently large.  Moreover, the second dominates the third and the fourth dominates the fifth for $C$ sufficiently large.
\end{proof}

\section{4-dimensional sectors}

\subsection{Contact structures on the 3-manifold pieces}

At this point, we have constructed a symplectic form on $\nu(H_1 \cup H_2 \cup H_3)$.  We now want to construct an inward-pointing Liouville vector field $\rho_{\lambda}$ along each boundary component $\widehat{Y}_{\lambda}$.  It is equivalent to constructing a primitive $\alpha_{\lambda}$ for $\omega$ in a neighborhood of $\widehat{Y}_{\lambda}$ that restricts to a contact form.  Then this 1-form $\alpha_{\lambda}$ is $\omega$-dual to the required Liouville vector field.

We now define the following 1-form over $\widehat{Y}_{\lambda}$:
\begin{align*}
   \widehat{\alpha}_{\lambda} &:= \begin{cases} \alpha_{\lambda}& \text{over $\nu(H_{\lambda})$} \\ 
   \mu_{\Sigma,\lambda} + \delta \mu_B & \text{over $\nu(\Sigma)$} \\
   \alpha_{\lambda-1} - 3 \betat_{\lambda-1} & \text{over $\nu(H_{\lambda-1})$} \end{cases}
\end{align*}
and 2-form
\[\omega := \begin{cases} d \alpha_{\lambda} & \text{over $\nu(H_{\lambda})$} \\ \omega_0 + d \mu_B & \text{over $\nu(\Sigma)$} \end{cases}\]

Combining Corollary \ref{cor:contact} and Proposition \ref{prop:main-extension} yields

\begin{proposition}
\label{prop:spine-cap}
The 2-form $\omega$ is well-defined and symplectic on $\nu(H_1 \cup H_2 \cup H_3)$.  Moreover, the form $\widehat{\alpha}_{\lambda}$ is well-defined, is a primitive for $\omega$, and is a contact form along $\widehat{Y}_{\lambda}$.
\end{proposition}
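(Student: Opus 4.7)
The proposal is to verify the proposition piece by piece by checking (a) that the definitions of $\omega$ and $\widehat{\alpha}_\lambda$ agree on the overlaps of the three open regions $\nu(H_\lambda)$, $\nu(\Sigma)$, $\nu(H_{\lambda-1})$, and (b) that the non-degeneracy and contact conditions hold, invoking Proposition~\ref{prop:sigma-symplectic}, Corollary~\ref{cor:contact}, and Proposition~\ref{prop:main-extension}. The key algebraic tool is the cocycle identity $\mu_{\Sigma,\lambda+1} = \mu_{\Sigma,\lambda} - 3\betat_{\lambda}$ established after the definition of $\mu_{\Sigma,\lambda}$, together with the support condition that the correction 1-form $\mu_{H,\lambda}$ vanishes in a collar $\{s \leq \epsilon\}$ of $\Sigma$, so that on $\nu(\Sigma) \cap \nu(H_\lambda)$ one has simply $\alpha_\lambda = \mu_{\Sigma,\lambda} + \delta \mu_B$.

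For the well-definedness of $\omega$: on $\nu(H_\lambda) \cap \nu(\Sigma)$ the form $d\alpha_\lambda$ reduces to $d(\mu_{\Sigma,\lambda} + \delta \mu_B) = \omega_0 + \delta d\mu_B$, which matches the definition of $\omega$ on $\nu(\Sigma)$ since on this overlap $d\mu_{H,\lambda} = 0$. For $\widehat{\alpha}_\lambda$, the overlap with $\nu(H_\lambda)$ is immediate. The overlap with $\nu(H_{\lambda-1})$ is the crucial computation: on $\nu(H_{\lambda-1}) \cap \nu(\Sigma)$ we use
\begin{equation*}
\alpha_{\lambda-1} - 3\betat_{\lambda-1} = \mu_{\Sigma,\lambda-1} + \tfrac{1}{C}\mu_{H,\lambda-1} + \delta\mu_B - 3\betat_{\lambda-1},
\end{equation*}
and $\mu_{H,\lambda-1}$ is supported away from $\Sigma$, so this reduces to $\mu_{\Sigma,\lambda-1} - 3\betat_{\lambda-1} + \delta\mu_B = \mu_{\Sigma,\lambda} + \delta\mu_B$ by the cocycle identity, matching the middle case.

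Once well-definedness is established, that $\widehat{\alpha}_\lambda$ is a primitive for $\omega$ follows region-by-region: on $\nu(H_\lambda)$ and $\nu(H_{\lambda-1})$ this is automatic since $\betat_{\lambda-1}$ is closed, and on $\nu(\Sigma)$ this is exactly how $\omega$ was defined. That $\omega$ is symplectic is a direct appeal to Proposition~\ref{prop:sigma-symplectic} on $\nu(\Sigma)$ and Proposition~\ref{prop:main-extension}(1) on $\nu(H_\lambda)$. The contact condition along $\widehat{Y}_\lambda$ decomposes into three claims according to the three regions, each supplied by an earlier result: over $\nu(H_\lambda)$ by Proposition~\ref{prop:main-extension}(2), which gives $\alpha_\lambda$ positive contact with coorientation $dt$; over $\nu(\Sigma)$ by Corollary~\ref{cor:contact}(1), which gives $\mu_{\Sigma,\lambda}+\delta \mu_B$ positive contact cooriented by $\zeta_\lambda$; and over $\nu(H_{\lambda-1})$ by Proposition~\ref{prop:main-extension}(3), noting that $\alpha_{(\lambda-1)+1} = \alpha_{\lambda-1} - 3\betat_{\lambda-1}$ is a \emph{negative} contact form on $H_{\lambda-1}$ with the $dt$-coorientation there, but as we approach $\widehat{Y}_\lambda$ from the sector $Z_\lambda$, the outward coorientation flips sign, so the negative contact condition on $H_{\lambda-1}$ translates into a positive contact condition on $\widehat{Y}_\lambda$.

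The only subtle point, and the one I would check most carefully, is the bookkeeping of coorientations at the transitions between the three regions: both $\widehat{Y}_\lambda$ and the handlebody $H_{\lambda-1}$ are cooriented in particular ways by the smoothing $\widehat{g}_\lambda$ (with $\zeta_\lambda = dt - ds$) versus by $dt$, and it is precisely the consistency of the sign conventions in Proposition~\ref{prop:first-symplectic-sigma}(3) (with the two inequalities of opposite sign for $\mu_{\Sigma,\lambda}$ and $\mu_{\Sigma,\lambda+1}$) that ensures the three pieces of $\widehat{\alpha}_\lambda$ assemble into a single contact form pointing inward into $Z_\lambda$. Once this orientation verification is complete, the proposition is the statement that the estimates of Propositions~\ref{prop:sigma-symplectic} and \ref{prop:main-extension} glue.
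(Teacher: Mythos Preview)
Your proposal is correct and takes essentially the same approach as the paper, which simply states that the proposition follows by combining Corollary~\ref{cor:contact} and Proposition~\ref{prop:main-extension}. You have unpacked this combination in detail---verifying well-definedness on overlaps via the cocycle identity $\mu_{\Sigma,\lambda+1} = \mu_{\Sigma,\lambda} - 3\betat_{\lambda}$ and the support of $\mu_{H,\lambda}$, and tracking the coorientation sign flip on $H_{\lambda-1}$---which is exactly what the paper leaves implicit.
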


\subsection{Capping off with symplectic fillings}
The last remaining piece is to fill in the three concave ends of $(\nu(H_1 \cup H_2 \cup H_3),\omega)$ with Weinstein 1-handlebodies to obtain a closed symplectic 4-manifold.

We fix a standard model $(W_k,\omega_k,\rho_k,\phi_k)$ of a 4-dimensional Weinstein 1-handlebody, where 
\begin{enumerate}
    \item $W_k$ has a handle decomposition consisting of one 0-handle and $k$ 1-handles for $k \geq 0$,
    \item $\omega_k$ is an exact symplectic form,
    \item $\phi_k$ is a proper, exhausting Morse function with exactly one index-0 critical point and exactly $k$ index-1 critical points, and
    \item $\rho_k$ is a complete Liouville vector field that is gradient-like for the function $\phi_k$.
\end{enumerate}
By translation, we can assume that all critical values of $\phi_k$ are negative, so that the level set $Y_k = \phi^{-1}_k(0)$ is homeomorphic to either $S^3$ (if $k = 0$) or $\#_k S^1 \times S^2$ (if $k \geq 1$). Let $\alpha_k = \omega_k(\rho_k,-)$ be the induced contact form and $\xi_k$ the corresponding contact structure. The symplectization $(\RR \times Y, d(e^t \alpha_k))$ embeds symplectically into $(W_k,\omega_k)$ by flowing along the Liouville vector field $\rho_k$.

\begin{proposition}
If the contact structure $(\widehat{Y}_{\lambda},\widehat{\xi}_{\lambda})$ induced by the contact form of Proposition \ref{prop:spine-cap} is tight, then it can be filled in by a symplectic 1-handlebody.
\end{proposition}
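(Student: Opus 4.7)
The plan is to identify the concave boundary $(\widehat{Y}_\lambda, \widehat{\xi}_\lambda)$ with the convex boundary of a standard Weinstein 1-handlebody and then glue along a common symplectization collar. Topologically, the closure of the region on the far side of $\widehat{Y}_\lambda$ from the spine is diffeomorphic to the sector $Z_\lambda$ of the trisection, which is a 4-dimensional 1-handlebody with some number $k_\lambda \geq 0$ of 1-handles. In particular, $\widehat{Y}_\lambda$ is diffeomorphic to $\#_{k_\lambda} S^1 \times S^2$ (or $S^3$ if $k_\lambda = 0$), and therefore to the boundary of the model Weinstein 1-handlebody $W_{k_\lambda}$.

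The key ingredient is Eliashberg's classification of tight contact structures on $S^3$ and on $\#_k S^1 \times S^2$: up to isotopy there is a unique such tight structure, namely the Stein-fillable contact structure $\xi_{k_\lambda}$ realized as the boundary of $W_{k_\lambda}$. Since by hypothesis $\widehat{\xi}_\lambda$ is tight, this uniqueness yields a contactomorphism
\[\Phi: (\widehat{Y}_\lambda, \widehat{\xi}_\lambda) \longrightarrow (\partial W_{k_\lambda}, \xi_{k_\lambda}).\]

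For the gluing, Proposition \ref{prop:spine-cap} provides a primitive $\widehat{\alpha}_\lambda$ for $\omega$ near $\widehat{Y}_\lambda$ whose $\omega$-dual Liouville vector field $\rho_\lambda$ points inward. Flowing backward along $\rho_\lambda$ identifies a collar of $\widehat{Y}_\lambda$ inside $\nu(H_1 \cup H_2 \cup H_3)$ with a one-sided piece $(-\epsilon,0] \times \widehat{Y}_\lambda$ of the symplectization $(\RR \times \widehat{Y}_\lambda, d(e^t \widehat{\alpha}_\lambda))$. On the Weinstein side, a neighborhood of $\partial W_{k_\lambda}$ inside $W_{k_\lambda}$ likewise embeds as a one-sided symplectization piece via backward flow of $\rho_{k_\lambda}$. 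The pullback $\Phi^*\alpha_{k_\lambda}$ differs from $\widehat{\alpha}_\lambda$ by a positive function, which is absorbed by reparametrizing the symplectization coordinate (equivalently, by an isotopy of defining contact forms). Gluing the two collars inside the common symplectization produces a symplectic manifold in which $\omega$ extends across a copy of $W_{k_\lambda}$ attached along $\widehat{Y}_\lambda$.

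The conceptual heart of the argument is the invocation of Eliashberg's uniqueness theorem, which is exactly where the tightness hypothesis does its work; without tightness, there is no reason to expect $\widehat{\xi}_\lambda$ to agree with the boundary of any Stein 1-handlebody, and the cap-off may fail even though a smooth filling of $\widehat{Y}_\lambda$ by a 1-handlebody always exists. Once a contactomorphism to the standard tight structure is in hand, the remaining Liouville gluing is the standard procedure for capping a concave symplectic end with a strong symplectic filling, and the only mild technicality is matching the two contact forms up to a positive multiplicative factor via a reparametrization of the symplectization coordinate.
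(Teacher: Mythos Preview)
Your proof is correct and follows essentially the same approach as the paper: invoke the uniqueness of tight contact structures on $\#_k S^1\times S^2$ (which you attribute explicitly to Eliashberg), obtain a contactomorphism to the model boundary, absorb the positive conformal factor between the two contact forms via the symplectization coordinate, and glue. The paper phrases the last step as identifying a neighborhood of $\widehat{Y}_\lambda$ with a neighborhood of the graph of $\log f$ in the symplectization and then identifying $\widehat{Z}_\lambda$ with the corresponding sublevel set, which is the same construction you describe.
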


\begin{proof}
There is a unique tight contact structure on $S^3$ and on $\#_k S^1 \times S^2$ for each $k \geq 1$, which is contactomorphic to the standard model $(Y_k,\xi_k)$.  Choose an identification $(\widehat{Y}_{\lambda},\widehat{\xi}_{\lambda})$ with $(Y_k,\xi_k)$ for the appropriate $k \geq 0$.  There exists some positive function $f: \widehat{Y}_{\lambda} \rightarrow \RR$ such that
\[ \widehat{\alpha}_{\lambda} = f \alpha_k\]
Consequently, we can symplectically identify a neighborhood of $\widehat{Y}_{\lambda} \subset (X,\omega)$ with a neighborhood of the graph of $\log f$ in the symplectization $(\RR \times Y_k,d(e^t \alpha_k))$.  We can then smoothly identify the 4-dimensional sector $\widehat{Z}_{\lambda}$ of the trisection with the sublevel set of the graph of $\log f$ to obtain a symplectic structure extending the one we constructed on the spine.
\end{proof}

\begin{proposition}
An $n$-parameter family of contact forms $\alpha_{\lambda}$ determines a $n$-parameter family of symplectic fillings.
\end{proposition}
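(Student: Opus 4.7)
The plan is to promote the construction of the previous proposition to an $n$-parameter family. For each parameter value $s$ in the $n$-dimensional parameter space $P$, the data of Theorem \ref{thrm:main} produces a contact form $\widehat{\alpha}_{\lambda}(s)$ on $\widehat{Y}_{\lambda}$ whose induced contact structure $\widehat{\xi}_{\lambda}(s)$ is, by assumption (or by the hypothesis inherited from the context), tight. Eliashberg's classification then supplies, for each $s$, a contactomorphism identifying $(\widehat{Y}_{\lambda}, \widehat{\xi}_{\lambda}(s))$ with the standard model $(Y_k, \xi_k)$. The heart of the proof is to make this identification smooth in $s$.

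First, I would fix a basepoint $s_0 \in P$ and a reference contactomorphism $\phi_{s_0}: (\widehat{Y}_{\lambda}, \widehat{\xi}_{\lambda}(s_0)) \to (Y_k, \xi_k)$. Since $\{\widehat{\xi}_{\lambda}(s)\}_{s \in P}$ is a smooth family of contact structures on the closed 3-manifold $\widehat{Y}_{\lambda}$, a parametric version of Gray's theorem applies: integrating the time-dependent vector field produced by the Moser trick along rays emanating from $s_0$ (and assuming $P$ is contractible, which suffices for the notion of an $n$-parameter family used in Theorem \ref{thrm:main}) yields a smooth family of diffeomorphisms $\Psi_s: \widehat{Y}_{\lambda} \to \widehat{Y}_{\lambda}$ with $\Psi_{s_0} = \mathrm{id}$ and $\Psi_s^*\widehat{\xi}_{\lambda}(s) = \widehat{\xi}_{\lambda}(s_0)$. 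Setting $\phi_s = \phi_{s_0} \circ \Psi_s^{-1}$ gives a smooth $n$-parameter family of contactomorphisms $\phi_s: (\widehat{Y}_{\lambda}, \widehat{\xi}_{\lambda}(s)) \to (Y_k, \xi_k)$.

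With this family in hand, define positive functions $f_s: \widehat{Y}_{\lambda} \to \RR_{>0}$ by the relation $\phi_s^* \alpha_k = f_s \, \widehat{\alpha}_{\lambda}(s)$; smoothness in $s$ is automatic. Exactly as in the previous proposition, a neighborhood of $\widehat{Y}_{\lambda}$ in the spine of the symplectic manifold is identified with a neighborhood of the graph of $\log f_s$ in the symplectization $(\RR \times Y_k, d(e^t \alpha_k))$, and the sublevel set inside the Weinstein handlebody $(W_k, \omega_k, \rho_k, \phi_k)$ caps off the concave end. Carrying out this construction simultaneously over each of the three boundary components and each $s \in P$ produces the desired $n$-parameter family of closed symplectic forms on $X$.

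The main obstacle is the smoothness of the family $\phi_s$, i.e.\ the parametric Gray stability step. On a contractible parameter space this is a direct application of the Moser trick, but for more general $P$ one would need to check that the associated bundle of tight contactomorphisms over $P$ admits a smooth section — equivalently, that the classifying map from $P$ to the space of tight contact structures on $Y_k$ is null-homotopic. For the local $n$-parameter families implicit in the statement, this poses no difficulty, and the identifications glue because the construction on the spine already extends smoothly in $s$ by the corresponding parametric version of Proposition \ref{prop:main-extension}.
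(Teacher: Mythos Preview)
Your proposal is correct and follows the same approach as the paper: reduce the family of contact forms to a family of positive functions $f_s$ relative to the standard model $\alpha_k$, take the corresponding family of graphs in the symplectization inside $W_k$, and pull back. The paper's proof is a three-line version of exactly this, writing $\widehat{\alpha}_{\lambda,t} = f_t\,\alpha_k$ directly and omitting the parametric Gray step you spell out; your treatment simply makes explicit the smooth dependence of the identification with $(Y_k,\xi_k)$ on the parameter.
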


\begin{proof}
An $n$-parameter family of contact forms $\widehat{\alpha}_{\lambda,t}$ corresponds to a family of positive functions of the form
\[\widehat{\alpha}_{\lambda,t} = f_t \alpha_k\]
This determines a family of graphs in $W_k$ and therefore a family of diffeomorphisms of $\widehat{Z}_{\lambda}$ into $W_k$.  Pulling back, we obtain a family of symplectic forms on $\widehat{Z}_{\lambda}$.
\end{proof}

\begin{remark}
The theorem of Laudenbach and Poenaru implies that each 4-dimensional sector $Z_{\lambda}$ can be glued in uniquely up to diffeomorphism.  Similarly, the symplectic structure we construct on $Z_{\lambda}$ is essentially unique as there is a unique tight contact structure on $Y_k$, which has a unique Weinstein filling up to deformation \cite[Theorem 16.9(c)]{CE12}.
\end{remark}

\bibliography{references}
\bibliographystyle{alpha}

\end{document}